\documentclass[11pt]{amsart}
\nonstopmode
\usepackage{amsbsy,amssymb,amscd,amsfonts,latexsym,amstext,delarray, mathrsfs, amsmath,graphicx,color,caption}
\usepackage{graphicx}
\usepackage{enumerate}
\input xy

\xyoption{all}
\pagestyle{plain}
\usepackage{euscript}
\usepackage{multirow}
\usepackage{etex, pictexwd,dcpic}
\usepackage[normalem]{ulem}

\usepackage{amssymb}

\usepackage{url}
\usepackage[margin=1in]{geometry} 

\newtheorem{theorem}{Theorem}[section] 
\newtheorem{claim}[theorem]{Claim}
\newtheorem{corollary}[theorem]{Corollary}
\newtheorem{lemma}[theorem]{Lemma} 

\theoremstyle{definition} 

\newcommand{\Hom}{\operatorname{Hom}}
\newcommand{\rank}{\operatorname{rank}} 


\DeclareGraphicsRule{.tif}{png}{.png}{`convert #1 `dirname #1`/`basename #1 .tif`.png}
\usepackage{amsbsy,amssymb,amscd,amsfonts,latexsym,amstext,delarray, amsmath,graphicx,color,caption}

\usepackage{mathtools}
\usepackage{mathrsfs}
\usepackage{amsmath}
\usepackage{amssymb}

\usepackage{csquotes}
\usepackage{amscd}
\usepackage{tikz}
\usetikzlibrary{positioning}
\usepackage{extarrows}
\DeclareMathOperator{\Aut}{Aut} 

\usepackage[colorlinks=true, pdfstartview=FitV,
 linkcolor=blue,citecolor=blue,urlcolor=blue]{hyperref}
\DeclareGraphicsRule{.tif}{png}{.png}{`convert #1 `dirname #1`/`basename #1 .tif`.png}

\def\wgal{\widetilde w}
\renewcommand{\a}{\alpha}

\newcommand{\h}{\mathfrak h}

\newcommand{\en}{\begin{enumerate}}
\newcommand{\te}{\end{enumerate}}
\newcommand{\Z}{{\mathbb Z}}

\newcommand{\C}{{\mathbb C}}

\newenvironment{red}{\relax\color{red}}{\relax}
\newenvironment{blue}{\relax\color{blue}}{\relax}

\newcommand{\ber}{\begin{red}}
\newcommand{\er}{\end{red}}
\newcommand{\beb}{\begin{blue}}
\newcommand{\eb}{\end{blue}}

%
%
    \hfuzz 30pt
    \vfuzz 30pt

\usepackage{fullpage}
%
%
%
%
%
%
%
%
%

\newtheorem {proposition}[theorem]    {Proposition}

\newtheorem{definition}[theorem] {Definition}

\def\wgal{\widetilde w}

\def\F{\mathbb{F}}
\def\Z{\mathbb{Z}}

\def\C{\mathbb{C}}

\def\bsl{\backslash}
\def\calU{\mathcal{U}}

\numberwithin{equation}{section}
\DeclareMathOperator{\ad}{ad}
\def\Sol{\textrm{Sol\,}}
\def\C{{\mathbb C}}
\def\Hom{\textrm{Hom}}
\def\Vol{\textrm{Vol}}
\def\ker{\textrm{ker\,}}
\def\rank{\textrm{rank\,}}

\def\im{\textrm{Im\,}}

\def\card{{\mathrm card}\hskip1pt}

\DeclareMathOperator{\re}{{re}}
\oddsidemargin 0in
\evensidemargin 0in
\headheight 24pt
\headsep 0.25in
\hoffset=-0.2cm
\voffset=-1.3cm
\setlength{\parskip}{0.10cm}
\parindent 0in

  \hfuzz 30pt
    \vfuzz 30pt

    \textheight 8.8in

    \oddsidemargin 0pt
    \evensidemargin \oddsidemargin
    \marginparwidth 0.5in

    \textwidth 6.6in

\baselineskip=20pt

\begin{document}

\title{\bf{Eisenstein series on arithmetic quotients of rank 2 Kac--Moody groups over finite fields}}
\date{\today}

\author[A. Ali]{Abid Ali}
\address{Department of Mathematics and Statistics, University of Saskatchewan, Saskatoon, SK S7N 5A2, Canada}
\email{abid.ali@usask.ca}

\author[L. Carbone]{Lisa Carbone$^{\dagger}$}
\address{Department of Mathematics, Rutgers University, Piscataway, NJ 08854-8019, U.S.A.}
\email{carbonel@math.rutgers.edu}
\thanks{$^{\dagger}$This work was partially supported by the Simons Foundation, Mathematics and Physical Sciences-Collaboration Grants for Mathematicians, Award Number 961267}

\author[Paul Garrett]{Paul Garrett}
\address{School of Mathematics, The University of Minnesota, 206 Church Street SE
Minneapolis, MN 55455}
\email{garrett@umn.edu }

\begin{abstract}
Let $G$ be an affine or hyperbolic rank 2 Kac--Moody group over a finite field $\mathbb F_q$. Let $X=X_{q+1}$ be the Tits building of $G$, the $(q+1)$--homogeneous tree, and let $\Gamma$ be a non-uniform lattice in $G$. When $\Gamma$ is a standard parabolic subgroup for the negative $BN$--pair, we define Eisenstein series on $\Gamma \bsl X$ and  prove its convergence in a half space using Iwasawa decomposition of the Haar measure on $G$. A crucial tool is a description of the vertices of $X$ in terms of Iwasawa cells. We also prove meromorphic continuation of the Eisenstein series. This requires us to construct an integral operator on the Tits building $X$ and a truncation operator for the Eisenstein series. We also develop the functional analytic framework necessary for proving meromorphic continuation in our setting, by refining and extending Bernstein's Continuation Principle.
\end{abstract}


\maketitle
\section{Introduction}
Let $\mathrm{k}=\F_q$ be a finite field of cardinality $q$ and $G$ be a complete affine or hyperbolic rank 2 Kac--Moody group over $\mathrm{k}$.  Let $X=X_{q+1}$ be the Tits building of $G$. Then by \cite{CG}, $X$ is the $(q+1)$--homogeneous tree.  Let $\Gamma=P_1^-$ be the standard parabolic subgroup for the negative $BN$-pair associated to a choice of simple root. Then $\Gamma$ is a non-uniform lattice in $G$. We define Eisenstein series on $\Gamma\bsl X$. The definition of Eisenstein series for $\Gamma'=P_2^-$ works just as well with the obvious modifications.
Our Eisenstein series is a combinatorial  analog of the classical non-holomorphic Eisenstein series on the Poincar\'e upper half plane, where the Tits building $X$ of $G$ plays the role of the upper half plane.

To discuss our construction of Eisenstein series on Kac--Moody groups, we first consider the analog for $G = \mathrm{PGL}_2(\mathrm{k}((t^{-1})))$, where $\mathrm{k}((t^{-1}))$ denotes the field of formal Laurent series over $\mathrm{k}$. Let $K$ be a maximal compact subgroup of $G$, such as $K = \mathrm{PGL}_2(\mathrm{k}[[t^{-1}]])$, and let $\Gamma \leq G$ be a discrete subgroup, such as $\mathrm{PGL}_2(\mathrm{k}[t])$. One can define a combinatorial Laplace operator $T$, which is induced by the \textit{adjacency operator}, acting on functions defined on the vertices of the Tits building. The vertices of the Tits building are encoded in the coset space $G/K$. One may also consider $\Gamma$-automorphic functions on the quotient $\Gamma \backslash G/K$.

The \textit{spectrum} $\mathrm{Spec}(T)$ consists of eigenvalues of $T$ on $\Gamma \backslash G/K$. It is known that
$
\mathrm{Spec}(T) \subset \mathbb{R},
$
that $\mathrm{Spec}(T)$ is symmetric about the origin, contains a continuous part including the origin, and has finitely many discrete values (\cite{E}). Eisenstein series are eigenfunctions of $T$ on $\Gamma \backslash G/K$ whose eigenvalues correspond to the continuous spectrum of $T$.

 More precisely, let $X$ be the Bruhat--Tits tree of $G={\rm PGL}_2(\mathrm{k}((t^{-1})))$ and let $\Gamma={\rm PGL}_2(\mathrm{k}[t])$. Let $T$ denote the adjacency operator operating on functions on the vertices of $X$.  
Efrat showed in (\cite{E}) that for
$\Gamma={\rm PGL}_2({\mathbb F}_q[t])$, the discrete spectrum of the adjacency operator on
$\Gamma\backslash X$ consists only of
$\pm(q+1)$. Thus two one-dimensional eigenspaces exist, namely the constant 
functions with eigenvalue $(q+1)$ and the alternating functions
with eigenvalue $-(q+1)$. Efrat also showed that there are continuous spectra 
described explicitly by Eisenstein
series and parametrized by the interval $[-2\sqrt{q},2\sqrt{q}]$.  The constant and 
alternating eigenfunctions correspond to poles of  Eisenstein series
$E_{}(g,s)$ at
$s=1$ and $s=1-\pi i/\log(q)$, respectively. Efrat then
obtained a decomposition of
$L^2(\Gamma\backslash X)$ into the
$T$-invariant subspaces generated by  discrete 
$L^2$-eigenfunctions  and a family of continuous eigenfunctions of $T$ given by suitable Eisenstein series. Namely, he showed that $L^2(\Gamma\bsl X)=R\oplus E$ where $R$ is generated by the constant and alternating functions, and
$E$ is generated by a family of continuous eigenfunctions of $T$ that satisfy a functional equation.

Later, using the Bruhat--Tits tree, Nagoshi \cite{N} provided new examples of the Selberg trace formula for principal congruence subgroups of ${\rm PGL}_2(\mathrm{k}[t])$. He expressed the Selberg zeta function as a determinant of the Laplacian, which is composed of both discrete and continuous spectra. 

Previously, Harder \cite{H} defined Eisenstein series for Chevalley groups over function fields as a function field analog of Langlands' theory of Eisenstein series for semisimple algebraic groups.

These Eisenstein series converge in a half space. Harder proved analytic continuation which is simpler than Langlands' method in the number field case. He also proved that the Eisenstein series are rational functions and showed that they satisfy a functional equation.  Building on Harder's work, Li \cite{Li} developed a full theory of Eisenstein series for $GL_2$ over function fields. She studied the intertwining operators arising from constant Fourier coefficients, proved that they are rational and showed that they satisfy a functional equation. She developed the theory of spectral decomposition for automorphic eigenfunctions of a certain Hecke operator, writing them as a sum of an Eisenstein series and a cusp form.

In \cite{Morris}, Morris defined Eisenstein series for reductive groups $G$ over global function fields $F$. He determined Eisenstein series necessary for a detailed spectral decomposition of the Hilbert space $L_2(G(F)\backslash G({\bf A}))$, where ${\bf A}$ is a ring of ad\'eles of the function field $F$. He  investigated the analytic properties of the Eisenstein series, in particular, the proof of functional equations for them.

In this work, we consider an extension of the above results to rank 2 affine or hyperbolic Kac--Moody groups. Although Garland showed that affine Kac--Moody groups are central extensions of Chevalley groups over fields of formal Laurent series \cite{GarlandLoopGp},  there is no such structure theorem for hyperbolic Kac--Moody groups. For these groups we must develop a theory of  Eisenstein series from first principles.

The complete rank 2 Kac--Moody group  $G=G^{\lambda}_A(\mathrm{k})$ considered here was constructed in \cite{CG} using an integrable highest weight module $V^{\lambda}$ of the underlying Kac--Moody algebra, corresponding to a dominant  integral highest weight $\lambda$. The group $G$ is  locally compact and totally disconnected, and in particular it admits a Haar measure. Such Kac--Moody groups have a twin $BN$--pair corresponding to positive and negative roots and a corresponding Tits building, a homogeneous tree $X$, which is locally finite.  As mentioned at the beginning, the standard parabolic subgroup $\Gamma :=P_1^-$ of the negative $BN$--pair is a non-uniform lattice subgroup of $G$ \cite{CG, RR}, analogous to ${\rm SL}_2(\mathbb{Z})$ in ${\rm SL}_2(\mathbb{R})$ and ${\rm SL}_2(\mathrm{k}[t])$ in ${\rm SL}_2(\mathrm{k}((t^{-1})))$.  That is, $\Gamma$ is a discrete subgroup of $G$ with finite covolume $\mu(\Gamma \backslash G)$ relative to a Haar measure $\mu$ on $G$ and has non-compact quotient $\Gamma \backslash G$. It follows that $\Gamma$ acts on the tree $X$ with finite vertex stabilizers (see \cite{CG} and \cite{AC}).  We note that the twin $BN$-pair structure on $G$ admits a twin structure $X^\pm$ on the Tits building. However, we will not use the twin structure here and we denote the Tits building as $X:=X^+$.

We define Eisenstein series for the combinatorial Laplacian on the Tits building of $G$. This allows us to use the Haar measure to establish the convergence of the Eisenstein series.  
To define Eisenstein series, we start with the unipotent part of the minimal parabolic subgroup for the spherical $BN$--pair, choose a lattice subgroup of $G$ and define a discrete eigenfunction (quasi-character) on the spherical torus. This function is extended to all of $G$ via the Iwasawa decomposition. We then average over an appropriate coset relative to a subgroup of the stabilizer of the standard apartment.

For our  Kac--Moody group $G$,  the Weyl group is infinite. To determine the constant term and to prove convergence of Eisenstein series for the rank 2 Kac--Moody group $G$, we avoid integrating over infinitely many Bruhat cells by associating a spherical building for $G$ with respect to a finite `spherical' Weyl group with the corresponding Bruhat decomposition:
$$G =  \mathcal{B}\sqcup \mathcal{B}w_{1}\mathcal{B} =  \mathcal{B}\sqcup \mathcal{B} w_{2}\mathcal{B},$$
 where $\mathcal{B}$ is the stabilizer of the end of the fundamental apartment of the Tits building (see also \cite{CG}).

 Since the Weyl group in our Kac--Moody setting is infinite, determining the constant term and proving the convergence of Eisenstein series for $G$ requires avoiding integration over infinitely many Bruhat cells. We achieve this by associating a spherical building to $G$, using a finite ``spherical'' Weyl group and the corresponding Bruhat decomposition:
$$
G = \mathcal{B} \sqcup \mathcal{B} w_1 \mathcal{B} = \mathcal{B} \sqcup \mathcal{B} w_2 \mathcal{B},
$$
where $\mathcal{B}$ is the stabilizer of the end of the fundamental apartment of the Tits building (see also \cite{CG}).

There are several other ingredients that are crucial to our study of Eisenstein series. Our results depend heavily on the structure of the fundamental domain for a non-uniform lattice $\Gamma\leq G$. We will work with the  lattice $\Gamma=P_1^-$, whose fundamental domain on the Tits building is a single vertex to which one cusp ( a semi-infinite ray) is attached.

For meromorphic continuation of Eisenstein series in Section ~\ref{trunc}, we use an analog of the classical truncation operator  due to Arthur \cite{A}. We also construct integral operators on the  Tits building $X$  in Section~\ref{operators}. With minor modifications, we can  obtain Eisenstein series relative to other lattices in $G$, such as the minimal parabolic subgroup $B^-$. 
It would be interesting to determine if there are cusp forms on $\Gamma\bsl X$ for $\Gamma$ a congruence subgroup of a lattice, as constructed in \cite{AC}.

To develop the functional analytic framework necessary for proving meromorphic continuation in our setting, we expand the results of \cite{BL},  as detailed in Section~\ref{appendix}. 
  We use a refinement of Selberg's method for the meromorphic continuation of Eisenstein series, due to Bernstein (see \cite{BL}, \cite{Garr1, Garr2}).
 Specifically, our approach relies on the \emph{Continuation Principle} (Theorem~\ref{bernstein}) and the \emph{Compact Operator Criterion} (Corollary~\ref{compactop}), both presented in the Appendix (Section~\ref{appendix}). These results require a detailed analysis of the extension  from weak holomorphy to strong holomorphy for families of functions,  also addressed in Section~\ref{appendix}.

Our setting, where the Tits building is one dimensional, allows us to simplify the  proof of meromorphic continuation in  the classical case.  In particular, the quotient graph $\Gamma \bsl X$ has  the simple structure of a semi-infinite ray. This allows us to work with an exact fundamental domain, rather than a Siegel set as in the classical case. We find that our analog of the truncation operator (Section~\ref{trunc}) is identically zero on the exact fundamental domain. Therefore, it is  a compact operator. While it is possible to define Siegel sets and related notions, this turns out not to be required for our proof of meromorphic continuation of Eisenstein series.

There are certain challenges associated with generalizing our results to higher rank. Our methods crucially depend on the relatively simple structure of the root system of a rank $2$ Kac--Moody algebra (see, for example, page 38 of \cite{CG}).  Determining the structure of the root systems of higher rank Kac--Moody algebras would need to be done on a case-by-case basis. Moreover, our results also depend on a spherical $BN$-pair for rank 2 Kac--Moody groups (constructed in \textit{op.cit.}). Such spherical $BN$-pairs do not exist in higher rank (see \cite{Ti4}, \cite{We}).  Our techniques for meromorphic continuation of Eisenstein series use harmonic analysis on the Tits building, which in rank 2 is a tree. There are further complications in extending these results to higher rank Tits buildings. Such a theory is not currently well developed for the  buildings associated to higher rank hyperbolic Kac--Moody groups.

Garland extended the classical theory of Eisenstein series to affine Kac--Moody groups (\cite{Ga1}, \cite{Ga2},  \cite{Ga5}, \cite{Ga6}, \cite{Ga7}, \cite{Ga8}). 
Lee and Lombardo (\cite{LL}) studied the constant terms of Eisenstein series on affine Kac--Moody groups over function fields  with finite constant fields. 
Liu (\cite{Liu}) made generalizations to affine Kac--Moody groups over number fields. Braverman and Gaitsgory in (\cite{BG}) developed a theory of \emph{geometric Eisenstein series}  for affine Kac--Moody groups in the framework of the geometric Langlands correspondence. This involves  a geometric reformulation of number theoretic and representation theoretic notions in terms of algebraic curves and vector bundles. Outside the affine case, Carbone, Lee and Liu (\cite{CLL}) defined Eisenstein series for rank $2$ hyperbolic Kac--Moody groups over $\mathbb{R}$ and established its convergence almost-everywhere.

The theory of Eisenstein series on Kac--Moody groups also has interesting intersections with mathematical physics. For example, in \cite{FK} and \cite{FGKP}, the authors studied Eisenstein series, their Fourier coefficients for $E_9$, $E_{10}$ and $E_{11}$, along with their applications in string theory.

The following questions remain open. We conjecture that our Eisenstein series on rank 2 Kac--Moody groups is a rational function. It would then remain  to determine its poles and  Fourier coefficients. 
 We conjecture that the only eigenvalues $\lambda$ with $|\lambda|>2q$ for which the 
corresponding eigenfunctions lie in $L^2$ are 
$\lambda=\pm (q+1)$.
There are further open questions about the full spectrum of the adjacency operator and if the discrete eigenfunctions correspond to poles of the Eisenstein series.

The authors  wish to thank the referee for their careful and systematic reading and  helpful comments which  improved the paper. We  are grateful to  Steve Miller and Peter Sarnak  for guidance and encouragement. We thank Howard Garland, Kyu-Hwan Lee and Dongwen Liu who shared many of their ideas and helped with earlier stages of the project.  We also thank Dmitry Gourevich  for helpful comments.

\section{Rank 2 Kac--Moody groups }

Let $I=\{1,2\}$ and
$A=(a_{ij})_{i,j\in I}$
be the symmetric generalized Cartan matrix defined by $a_{ii}=2$ and for $i\ne j$ $a_{ij}=a_{ji}=-m$ for all $i,j\in I$ with $m\ge 2$.
When $m=2$, the matrix $A$ is  of affine type, and when $m\geq 3$, it  is  of hyperbolic type.
 In this section, we summarize the construction of a Kac--Moody algebra $\mathfrak{g}=\mathfrak{g}(A)$, and a complete Kac--Moody group $G=G_{A}(\mathrm{k})$ over $k=\F_q$ associated with $A$.  We also describe its associated Tits building, which is a homogeneous tree of degree $q+1$.

\subsection{Kac--Moody algebras}\label{kmalg}
 
Let $\frak{h}$ be a $\C$-vector space and let $\langle\cdot,\cdot\rangle: \frak{h}^*\times\frak{h}\to \C$ denote the natural pairing,
that is, $\langle \varphi, h\rangle:= \varphi(h)$ for $\varphi\in\h^*$, $h\in\h$. 
Following \cite{K}, we say that $(\h,\Pi,\Pi^\vee)$ is a \emph{realization} of $A$ if

\begin{itemize}
\item $\h$ has dimension $n:= 4-\rank(A)$,

\item $\Pi=\{\alpha_1,\alpha_{2}\}\subseteq \frak{h}^*$ and
$\Pi^\vee=\{\alpha_1^\vee,\alpha_{2}^\vee\}\subseteq \frak{h}$ are  linearly independent sets, and

\item $\langle\alpha_j,\alpha_i^{\vee}\rangle=\alpha_j(\alpha_i^{\vee})=a_{ij}$ for $i,j\in I$.
\end{itemize}
We call the elements $\a_i$ \emph{simple roots} 
and $\a^\vee_i$ \emph{simple coroots}.

As in \cite[Theorem 9.11]{K}, the associated \emph{Kac--Moody algebra} $\frak{g}=\mathfrak g(A)$ is the Lie algebra over $\C$ with generating set  
$\frak{h}\cup\{e_i,f_i\mid i\in I\}$ and defining relations:
\begin{align*}
 [h,h']&=0;\\
[h,e_i]&=\langle\alpha_i,h\rangle e_i;&
[h,f_i]&=-\langle\alpha_i,h\rangle f_i;\\
\label{Lef} [e_i,f_i]&=\alpha_i^{\vee};&
[e_i,f_j]&=0;\\
(\ad e_i)^{-a_{ij}+1}(e_j)&=0;&
(\ad f_i)^{-a_{ij}+1}(f_j)&=0;
\end{align*}
for $h,h'\in\frak{h}$, and $i,j\in I$ with $i\ne j$.

The \emph{roots}  of $\frak{g}$ are the nonzero  elements
$\a\in\mathfrak h^*$ for which the corresponding \emph{root space}
$$\frak{g}_{\alpha} := \{x\in \frak{g}\mid[h,x]=\alpha(h)x\text{ for all $h\in \frak{h}$}\}$$
is nonzero. 
The simple roots $\a_i$ and their negatives have root spaces $\frak{g}_{\a_i}=\C e_i$ and $\frak{g}_{-\a_i}=\C f_i$, respectively.  
Every root $\alpha$ can be written in the form $\alpha=\sum_{i=1}^{\ell} k_i\alpha_i$ where the $k_i$ are integers with either all $k_i \ge 0$, in which case $\alpha$ is called {\it positive}, or all $k_i\leq 0$, in which case $\alpha$ is called {\it negative}. 

Let $Q$ be the root lattice $\mathbb{Z}\alpha_{1}\oplus \mathbb{Z}\alpha_{2}$, $Q_{+}=\mathbb{Z}_{\ge 0}\alpha_{1}\oplus \mathbb{Z}_{\ge 0}\alpha_{2}$ and $Q_{-}=-Q_{+}$. Let $\Delta\subset Q$ denote the set of roots, $\Delta_{+}\subset Q_{+}$ is the set of positive roots, and $\Delta_{-}\subset Q_{-}$ is the set of negative roots. For $i=1,2$, we define a simple reflection 
$$w_i(\alpha_j)
    :=
    \alpha_j - \alpha_j(\alpha_i^{\vee})\alpha_i.$$
     The simple reflections $w_i,\; i=1,2$, generate a subgroup $W=W(A)\subset \Aut(\mathfrak{h}^{\ast})$, called the {\it Weyl group}.  Hence, every element $ w \in W$ can be written as a product of simple reflections
\begin{equation*}
    w = w_{j_1} w_{j_2} \dots w_{j_p},
\end{equation*}
where $ j_1, \dots, j_p \in I$. A minimal such expression is called a {\it reduced decomposition} of $w$. We define a function 
$\ell\colon W\rightarrow \mathbb{Z}$
by
\begin{equation}\label{lengw}
    \ell(w) = p,
\end{equation}
and call it the {\it length} of  $w$. The set $\Delta^{\re}= W\Pi\subset\Delta$ is known as the set of \emph{real roots.} The remaining roots $\Delta\backslash \Delta^{\re}$ are called
\emph{imaginary roots.} We denote the set of positive (resp. negative) real roots by $\Delta^{\re}_{+}=\Delta^{\re} \cap \Delta_{+}$ (resp. $\Delta^{\re}_{-}=\Delta^{\re}\cap \Delta_{-}$).

The Lie algebra $\mathfrak g$ has a root  space decomposition
$$\frak{g}=\h\oplus\bigoplus_{\a\in\Delta}\frak{g}_\a$$ 
and a triangular decomposition by (\cite[Theorem 1.2]{K})
$$\frak{g}=\frak{n}^- \oplus \frak{h} \oplus \frak{n}^+,$$ 
where 
$\frak{n}^+ =\bigoplus_{\alpha\in\Delta^+}\frak{g}_{\alpha}$
and
$\frak{n}^- = \bigoplus_{\alpha\in\Delta^-}\frak{g}_{\alpha}$.

 The space $\mathfrak{h}^{\ast}$ can be equipped with a partial order $\le $ defined as: for $\lambda, \mu \in \mathfrak{h}^{\ast}$, we write $\mu\le \lambda$ if and only if $\lambda-\mu\in Q_{+}$. Similarly, we can define a partial order on $\mathfrak{h}$, which we denote by the same symbol $\le $, setting $Q^{\vee}_{+}=\mathbb{Z}_{\ge 0}\alpha_{1}^{\vee}\oplus \mathbb{Z}_{\ge 0}\alpha_{2}^{\vee}$ and imposing the same defining condition as above. An element $\lambda\in \mathfrak{h}^{\ast}$ is {\it integral} if $\langle \lambda,\alpha_{i}^{\vee}\rangle\in \mathbb{Z}$, is {\it dominant} if $\langle \lambda,\alpha_{i}^{\vee}\rangle \ge 0$, and is {\it regular} if $\langle \lambda,\alpha_{i}^{\vee}\rangle \ne 0$, for all $i\in I$. 
The {\it weight lattice} $\Lambda\subset\mathfrak{h}^{\ast}$ is defined to be
 $$\Lambda:=\{\lambda\in \mathfrak{h}^{\ast}\mid \langle \lambda, \alpha_i^{\vee}\rangle \in\Z,\ i=1,2\}.$$
 Let $\Lambda^{\vee}=\operatorname{Hom}_{\mathbb{Z}}(\Lambda, \mathbb{Z}) $ be the  coweight lattice. We denote by $\Lambda_{+}$ the set of dominant weights and $\Lambda_{reg}$ the set of regular weights. Similarly we define the sets $\Lambda_{+}^{\vee}$ and $\Lambda_{reg}^{\vee}$.

\subsection{Highest weight representations}
Let $\mathcal{U}=\mathcal{U}(\mathfrak{g})$ be the universal enveloping algebra of $\mathfrak{g}$. The triangular decomposition of $\mathfrak{g}$ yields the decomposition 
$$\mathcal{U}=\mathcal{U}(\mathfrak{n}^{+})\otimes \mathcal{U}( \mathfrak{h})\otimes \mathcal{U}( \mathfrak{n}^{-}),$$ 
where $\mathcal{U}(\mathfrak{n}^{+})$ and $\mathcal{U}(\mathfrak{n}^{-})$ are the universal enveloping algebras of $\mathfrak{n}^{+}$ and $\mathfrak{n}^{-}$, respectively.

For $\lambda\in \Lambda_{+}$, a $\mathfrak{g}$-representation $V^{\lambda}$ over $\mathbb{C}$ is a {\it highest weight representation} with the {\it highest weight} ${\lambda}\in \mathfrak{h}^{*}$ and a {\it highest weight vector} $v_{\lambda}$ if:
\begin{enumerate}
\item[(i)] $\mathfrak{n}^{+}\cdot v_{\lambda}=0$,
\item[(ii)] $h\cdot v_{\lambda}=\lambda(h)v_\lambda$, for all $h\in \mathfrak{h}$, 
 \item[(iii)] $V^{\lambda}=\mathcal{U}\cdot v_{\lambda}$,\;\text{ through the induced action of $\mathcal{U}$ on $V^{\lambda}$. }
  \end{enumerate}
 
 Moreover, the representation $V^{\lambda}$ is said to be an {\it integrable} highest weight representation if: 
\begin{enumerate}
\item[(iv)] for all $i\in I$, the elements $e_{i}$ and $f_{i}$ act as locally nilpotent operators on $V$, that is, for each $v\in V^{\lambda}$ and $i \in I$ there exist positive integers $M=M(v)$ and $N=N(v)$ such that $e_{i}^{M}\cdot v=f_{i}^{N}\cdot v=0$.
 \end{enumerate}

The representation $V^{\lambda}$ has a {\it weight space decomposition} 
\begin{eqnarray}
V^{\lambda}=\displaystyle{\bigoplus_{\mu\in \mathfrak{h}^{\ast}}V^{\lambda}_{\mu}},
\end{eqnarray} 
where $V^{\lambda}_{\mu}=\{v\in V^{\lambda}\mid h\cdot v=\mu(h)v\; \text{ for all }h\in \mathfrak{h}\}.$ 
 Denote by $P_{\lambda}$ the set of weights of $V^{\lambda}$.  
 The set $P_{\lambda}$ inherits the partial order from $\mathfrak{h}^{\ast}$. Each $\mu\in P_{\lambda}$ satisfies $\mu\le \lambda$, which means $ \lambda-\mu=\sum_{i\in I}n_{i}\alpha_{i}$ with $n_{i}\in \mathbb{Z}_{\ge 0}$, for all $i\in I$.

\subsection{Minimal Kac--Moody groups}\label{relations}
In this work, we use the minimal Kac--Moody group and its completion from \cite{CG}. (See also \cite{CLL}.) We describe the former group below and leave the latter for Section \ref{completion}.

 Let $n\in \mathbb Z$ and $h\in \mathfrak{h}$. We set
$\binom{h}{n}:=\frac{h(h-1)\dots (h-n+1)}{n!}.$
Let
${\mathcal U}_{\mathbb Z}\subset {\mathcal U}$ be the ${\mathbb Z}$-subalgebra generated by
$e_i^n/n!$, $f_i^n/n!$, and $\binom{h}{n}$, for $i=1,2$,  $h\in\Lambda^{\vee}$ and $n\geq 0$. Then
${\mathcal U}_{\mathbb Z}$ is a ${\mathbb Z}$-form of ${\mathcal U}$, that is, ${\mathcal U}_{\mathbb Z}$ is a subring and the canonical map
${\mathcal U}_{{\mathbb Z}}\otimes{\mathbb C}\longrightarrow {\mathcal U}$ is bijective. Set
$\mathfrak{g}_{\mathbb Z}=\mathfrak{g}\cap {\mathcal U}_{{\mathbb Z}}.$
For a field $\mathrm{k}$, let ${\mathcal U}_{\mathrm{k}}={\mathcal U}_{\mathbb Z} \otimes \mathrm{k}$ and $\mathfrak{g}_{\mathrm{k}}=\mathfrak{g}_{\mathbb Z} \otimes \mathrm{k}$.

Let $\lambda$ be a dominant integral regular weight and let $V^{\lambda}$ be the corresponding integrable highest weight representation with a fixed highest weight vector $v_{\lambda}$ as introduced in the previous subsection.
Let
$$V^{\lambda}_{\mathbb Z}={\mathcal U}_{\mathbb Z}\cdot v_{\lambda}.$$

Then $V^{\lambda}_{\mathbb Z}$ is a ${\mathbb Z}$-form of $V^{\lambda}$ as well as a
${\mathcal U}_{\mathbb Z}$-module.
Similarly, $V^{\lambda}_\mathrm{k}:= \mathrm{k}\otimes_{\mathbb Z} V^{\lambda}_{\mathbb Z}$
is a $\mathfrak{g}_{\mathrm{k}}$ and ${\mathcal U}_{\mathrm{k}}$-module.

A minimal Kac--Moody group over $\mathrm{k}$, which we denote by $G_{0}^{\lambda}(\mathrm{k})$, has a generating set
$$\langle \chi_{\pm \alpha_{i}}(u)\mid i=1,2,\; u\in \mathrm{k}\rangle \subset \mathrm{Aut}( V^\lambda_{\mathrm{k}}),$$ where for each $i=1,2$
\begin{eqnarray}\label{kmr1}
\chi_{\alpha_{i}}(u):=\exp({ue_{i}})=\sum_{n=0}^{\infty} u^n \dfrac{e_i^n}{n!}
\end{eqnarray}
and 
\begin{eqnarray}\label{kmr2}
\chi_{-\alpha_{i}}(u):=\exp({uf_{i}})=\sum_{n=0}^{\infty} u^n \dfrac{f_i^n}{n!}.\label{gendef}
\end{eqnarray}
Since $V^{\lambda}$ is integrable, the elements $e_i$ and $f_i$ are locally nilpotent on $V^{\lambda}$ and hence the expressions on the right-hand side of (\ref{kmr1}) and (\ref{kmr2}) are well defined
automorphisms of $V^{\lambda}_{\mathrm k}$. From now on, we drop $\mathrm{k}$ and $\lambda$ from the notations and just write
$G_{0}=G_{0}^{\lambda}(\mathrm{k})$.

\subsection{Unipotent subgroups}\label{sugrps}

The unipotent subgroups generated by the elements below will be used in the next sections. See \cite{Ti2} and \cite{CER} for the full set of defining relations for $G_{0}$.

\noindent For each $i=1,2$ and $ u \in \mathrm{k}^\ast$, set
\[ \wgal_{i}(u) :=\chi_{ \alpha_i}(u)\chi_{- \alpha_i}(-u^{-1})\chi_{ \alpha_i}(u),  \quad \wgal_{i}:=\wgal_{i}(1) \quad \text{ and } \quad  h_{i}(u) :=\wgal_{i}(u)\wgal_{i}^{-1}.\]

The elements $\wgal_{i}$, $i=1,2$,  generate a group $\widetilde{W}$, and we have
\[ \wgal_{i}\chi_{\alpha_{j}}(u)\wgal_{i}^{-1}= \chi_{w_{i}\alpha_{j}}(\eta_{ij}u), \] where $\eta_{ij} \in \{ \pm 1 \}$ for $ i,j=1,2$.

 The group $\widetilde{W}$ is a central extension of $W$.
That is, there is a surjective homomorphism $\epsilon: \widetilde{W}\to W$
which sends $\widetilde{w}_i$ to $w_i$, for  each $i$. The kernel of $\epsilon$ is an elementary
abelian $2$-group  generated by $\{(\widetilde{w}_i)^2\}$  (\cite[3.3]{Ti2}). Given
$\wgal\in \widetilde{W}$ and $w\in W$, we will say that $\wgal$ is a representative of
$w$ if $\epsilon(\wgal)=w$.  We will identify  $W$ (non-canonically)
with a subset (not a subgroup) of $\widetilde{W}$
which contains exactly one representative of every element of $W$. By a slight abuse of
notation, that set of  representatives will also be denoted by $W$.

The set $\{h_{i}(u)\mid i=1,2\;\text{and}\; u \in \mathrm{k}^*\}$ generate a subgroup $H$ which is isomorphic to $\mathrm{k}^{\ast}\times \mathrm{k}^{\ast}$. For $w\in W$, take a reduced word $w=w_{i_1}w_{i_2}\cdots w_{i_k}$ and set
$$ \widetilde{w}:= \widetilde {w}_{i_1} \widetilde {w}_{i_2}\cdots \widetilde{w}_{i_k}.$$   By abusing notation slightly, we will sometimes write $w$ for $\widetilde w$ and identify $\widetilde W$ with $W$.
For a real root $\alpha$, we fix $w \in W$ and $i \in I$ such that  $\alpha=w\alpha_{i}$. 
For $u\in \mathrm{k}$, define 
$$\chi_{\alpha}(u) :=\widetilde{w}\chi_{\alpha_{i}}(u)(\widetilde{w})^{-1}.$$
The element $\chi_{\alpha}(u)$ has an expression analogous to (\ref{gendef}), where $e_{i}$ and $f_{i}$ are replaced with those corresponding to positive and negative real root $\alpha$, respectively. 
We introduce the {\it root subgroup}
\begin{eqnarray}\label{rootsubgrp}
U_{\alpha}:=\{ \chi_{\alpha}(u)\mid u\in \mathrm{k}\}
\end{eqnarray}
corresponding to the real root $\alpha$. Finally, define a pair of unipotent subgroups
\begin{eqnarray*}
U^{+}_{0}& :=&\langle \chi_{\alpha}(u)\mid \alpha\in\Delta^{\re}_+,\  u\in \mathrm{k}\rangle=\langle U_{\alpha}\mid \alpha\in\Delta^{\re}_+\rangle\label{defU+},\\
U^{-}_{0}& :=&\langle \chi_{\alpha}(u)\mid \alpha\in\Delta^{\re}_{-},\ u\in \mathrm{k}\rangle=\langle U_{\alpha}\mid \alpha\in\Delta^{\re}_-\rangle,\label{defU-}
\end{eqnarray*}
where $ \Delta^{\re}_{\pm}=\Delta^{\re}\cap\Delta_{\pm}$.

For a real root $\alpha$ and $u\in \mathrm{k}^{\ast}$,
we write
\begin{eqnarray}
 \widetilde{w}_{\alpha}(u)=\chi_{\alpha}(u)\chi_{-\alpha}(-u^{-1})\chi_{\alpha}(u)   \end{eqnarray}
 and set $\widetilde{w}_{\alpha}=\widetilde{w}_{\alpha}(1)$ and for 
 \begin{eqnarray}\label{defh}
    h_{\alpha}(u)=\widetilde{w}_{\alpha}(u)\widetilde{w}_{\alpha} 
 \end{eqnarray}
where $u\in \mathrm{k}^{\ast}$.
We will need the following lemma for the upcoming sections.
\begin{lemma}
For any real root $\alpha$ and $u\in \mathrm{k}^{\ast}$, we have
\begin{eqnarray}
\chi_{\alpha}(u)=&\chi_{\alpha}(-u^{-1})h_{\alpha}(-u^{-1})\widetilde{w}_{\alpha}^{-1}\chi_{\alpha}(-u^{-1})
\end{eqnarray}
\end{lemma}

\begin{proof}
For any $t\in \mathrm{k}^{\ast}$, Equation \ref{defh} implies 
\begin{eqnarray*}
   \widetilde{w}_{\alpha}(t)= h_{\alpha}(t)\widetilde{w}_{\alpha}^{-1}.
 \end{eqnarray*}
 Using the definition of $\widetilde{w}_{\alpha}(t)$, we get
 $\chi_{\alpha}(t)\chi_{-\alpha}(t^{-1})\chi_{\alpha}(t)=h_{\alpha}(t)\widetilde{w}_{\alpha}^{-1}$
 and hence 
 \begin{eqnarray*}
 \chi_{-\alpha}(t^{-1})&=&\chi_{\alpha}(t)^{-1}h_{\alpha}(t)\widetilde{w}_{\alpha}^{-1}\chi_{\alpha}(t)^{-1}\\
 &=&\chi_{\alpha}(-t)h_{\alpha}(t)\widetilde{w}_{\alpha}^{-1}\chi_{\alpha}(-t).\\
 \end{eqnarray*}
 Finally, rewriting $t^{-1}$ as $t^{-1}=u\in \mathrm{k}^{\ast}$, and interchanging $-\alpha$ and $\alpha$ gives 
 \begin{eqnarray*}
 \chi_{\alpha}(u)&=&\chi_{\alpha}(-u^{-1})h_{\alpha}(-u^{-1})\widetilde{w}_{\alpha}^{-1}\chi_{\alpha}(-u^{-1}).
 \end{eqnarray*}

\end{proof}

\subsection{Completion of the minimal group}\label{completion}
In this subsection, we follow  \cite{CG, CLL}. The module  $V_{\mathrm{k}}^\lambda$
 admits a {\it coherently ordered basis} $\Psi=\{v_1,v_2,\dots\}$ which is defined as follows.  For each weight
$\mu$ of $V^{\lambda}$, we have $\mu=\lambda-\sum_{i=1}^\ell k_i\alpha_i$, where $k_i\in{\Bbb Z}_{\geq 0}$. The {\it depth} of
$\mu$ is defined to be
$$depth(\mu)\quad=\quad \sum_{i=1}^\ell k_i.$$
A basis $\Psi=\{v_1,v_2,\dots\}$ of $V^{\lambda}$ is called {\it coherently ordered} if
\begin{enumerate}
\item $\Psi$ consists of weight vectors.
\item If $v_i\in V^{\lambda}_{\mu}$, $v_j\in V^{\lambda}_{\mu'}$ and $depth(\mu')\ >\ depth(\mu)$, then $j>i$.
\item $\Psi\cap V^{\lambda}_{\mu}$ consists of an interval $v_k,v_{k+1},\dots,v_{k+m}$.
\end{enumerate}

We denote by $B_0$ the subgroup of $G_0$ consisting of the elements represented by upper triangular matrices with respect to $\Psi$.
For $t \in \mathbb Z_{>0}$, we let $V_t$ be the span of the $v_s \in \Psi$ for $s \le t$. Then $B_0 V_t \subset V_t$ for each $t$. Let $B_t$ be the image of $B_0$ in $\mathrm{Aut}(V_t)$. We then have surjective homomorphisms
\[ \pi_{tt'}: B_{t'} \longrightarrow B_t, \quad t' \ge t. \] We define $B$ to be the projective limit of the projective family $\{ B_t, \pi_{tt'} \}$. 

Now define a topology on $G_0$ as follows: for a base of neighborhoods of the identity, we take sets $U_t$ defined by 
\[ U_t :=\{ g \in G_0 : g v_i =v_i, \ i=1, 2, \dots , t \} \quad \text{ for } t \in \mathbb Z_{\ge 0}. \]
Let $g=(g_i)$ and $h=(h_i)$ be Cauchy sequences in $G_0$. By definition, for any $s$, there exists $N_1 >0$ such that $g_i g_j^{-1} (v) =v$ for $v \in V_s$ whenever $i,j > N_1$. We can find $t \ge s$ such that $\{g_i^{-1}(V_s) \}$ is contained in $V_t$ for any $i > N_1$, since $g_j^{-1} v = g_i^{-1} g_i g_j^{-1} v = g_i^{-1} v$ for $v \in V_s$ and $j > N_1$.  Take $N > N_1$ sufficiently large so that  $h_ih_j^{-1}v =v$ for any $v \in V_t$ whenever $i,j > N$.  Assume that $i, j >N$. Then we have
\[ g_i h_i h_j^{-1} g_j^{-1} (v) = g_i g_j^{-1} (v) = v \ \text{ for any} \ v \in V_s, \]
since $g_j^{-1}(v) \in V_t$. 
 This proves that $g_i h_i h_j^{-1} g_j^{-1} \in U_s$ whenever $i, j >N$. Therefore $gh=(g_i h_i)$ is Cauchy as well.

Define $G=G_{A}^{\lambda}(\mathrm{k})$ to be the completion of $G_0$, i.e. the equivalence classes of all Cauchy sequences of $G_0$, which is called the  complete (or  maximal) Kac--Moody group.

\subsection{$BN$-pairs and lattice subgroups }
We have the decomposition  $B=HU$  where $U$ is the completion of $U^{+}_{0}$. The subgroup $B\subset G$ is called a {\it minimal parabolic subgroup}. The group $B$  is compact and forms a profinite neighborhood of the identity in $G$ \cite{CER}. We will also write $B^+=B$. 
Suppose $N$ is the subgroup generated by $H$ and $\widetilde{w}_{i}$ for $i=1,2$.
Then we have $N=N_{G}(H)$.

In \cite{CG}, the authors showed that $G$ has Bruhat decomposition
$$G = \bigsqcup _{w\in W}\  BwB.$$
We define $B^-:=HU_0^{-}$ and write $U^-=U_0^-$ for convenience.   The {\it standard parabolic subgroups} are given as
$$P_i^{\pm} := B^{\pm} \sqcup   B^{\pm}w_iB^{\pm}, \qquad i=1,2.$$

The subgroup $B^-$ is a non-uniform lattice in $G$ (see Theorem 8.2 in \cite{CG}). This result is also obtained independently by R\'emy \cite{Re1}. It follows that for $i=1,2$, the negative parabolic subgroups $P_i^{-}=B^{-}\sqcup B^{-}w_iB^{-}$ are non-uniform lattices. There are bijective correspondences (\cite{Ti2})
\[
B^-\bsl G/B  \cong W,\quad
P_i^-\bsl G/B  \cong   W^+, \quad i=1,2,\]
where $W^+$ denotes an index 2 subgroup of $W$. 

From now and on, we often drop the `$+$' and refer to $P_i^+$ as $P_i$ for $i=1,2$ and $U_{0}^+$ as $U_{0}$.

\section{Decompositions of unipotent groups}\label{Udec}
 For $w\in W$, we set
\begin{eqnarray}
 S_{w}&:=&\{\beta\in \Delta_{+}\mid w\beta\in \Delta_{-}\}=\Delta_{+}\cap w^{-1}(\Delta_{-})\label{inverset1} \label{inverset2}
 \end{eqnarray}
and call it the {\it inversion set} associated to $w$. Then $S_{w}$ is a finite subset of $\Delta^{\re}_{+}$.
 
 Let $\alpha_{i}$ be a positive simple root and $w_{i}$ be the corresponding simple reflection for $i=1,2$. Suppose $w\in W$ is such that $w=w'w_{i}$ with $\ell(w)=\ell(w')+1$, then 
 \begin{eqnarray} \label{rem3.1}
S_{w}=\{\alpha_{i}\}\cup w_{i}S_{w'},
 \end{eqnarray}
 where $w_{i}S_{w'}=\{w_{i}\beta\mid \beta\in S_{w'}\}$.

We introduce the subgroup 
 \begin{eqnarray*}
 U_{0,w}:=\langle U_{\beta}\mid \beta\in S_{w}\rangle \subset U_0,
 \end{eqnarray*}
 where $U_{\beta}$ is the root subgroup corresponding to $\beta$ as defined in (\ref{rootsubgrp}). Since $S_{w}$ consists of finitely many elements, the group $U_{0,w}$ is a finite-dimensional subgroup of $U_{0}.$

Combining \eqref{rem3.1} with Lemma~6.3 of \cite{CG}, we obtain the following.
 \begin{lemma}\label{lemdec1}
For $w\in W$ such that $w=w'w_{i}$ with $\ell(w)=\ell(w')+1$, $i=1,2$, we have 
\begin{eqnarray}
U_{0,w}=(w_{i}^{-1}U_{0,w'}w_{i})\rtimes U_{\alpha_{i}}.
 \end{eqnarray}
\end{lemma}
 
We set
$$U_{0}^{w}:=U_{0}\cap w^{-1}U_{0}w.$$
\begin{lemma}\label{conlem1}
Let $w\in W$, then 
\begin{eqnarray}\label{sec3eq1}
U_{0}=U^{w}_{0}U_{0,w}=U_{0,w}U_{0}^{w}.
\end{eqnarray}
\end{lemma}

\begin{proof}
We prove $U_0 =  U_0^w U_{0,w}$ by  induction on $\ell(w)$.  
 For the simple reflections $w_{i}\in W$, $i=1,2$, the statement follows from Lemma~6.3 of \cite{CG}, and we can write an element $u\in U_{0}$ as 
\begin{eqnarray}\label{lem3.2eq1}
u=u^{w_{i}}u_{\alpha_{i}},
\end{eqnarray}
 for some $u_{\alpha_{i}}\in U_{\alpha_{i}}$ and $u^{w_{i}}\in U_{0}^{w_{i}}$.
We assume now that the assertion holds for all $w'\in W$ with $\ell(w')=n-1$.   Suppose $w\in W$
is such that $w=w'w_{i}$, where $w_{i}$ is a simple root reflection and $w'\in W$ with $\ell(w)=\ell(w')+1$. 
Continue to consider $u\in U_{0}$ with the decomposition (\ref{lem3.2eq1}) and write $u^{w_i}= w_i^{-1} \widetilde u w_i$ for $\widetilde u \in U_0$.  
 By the induction hypothesis, there exists  $\widetilde u_{w'}\in U_{0,w'}$ and $\widetilde u^{w'}\in U_0^{w'}$ such that 
\begin{eqnarray}\label{lem3.2eq3}
\widetilde u=\widetilde u^{w'}\widetilde u_{w'}.
\end{eqnarray}

Substituting (\ref{lem3.2eq3}) into (\ref{lem3.2eq1}), we get 
\begin{eqnarray}\label{lem3.2eq4}
u&=&[w_{i}^{-1}\widetilde u^{w'} \widetilde u_{w'}w_{i}]u_{\alpha_{i}}\nonumber\\
&=& [w_{i}^{-1}\widetilde u^{w'} w_{i}][w_{i}^{-1}\widetilde u_{w'}w_{i}]u_{\alpha_{i}}.
\end{eqnarray}
 For the simple root reflection $w_{i}$, $U^{w_{i}}$ is a normal subgroup of $U_{0}$ and $U^{w_{i}}=\langle U_{\beta}\mid \beta\in \Delta\setminus\{\alpha_{i}\}\}$ (see \cite[Lemma 6.3]{CG}).
This and the fact that  $\widetilde u\in U^{w_{i}}$ gives $w_{i}^{-1}\widetilde u^{w'} w_{i} \in U_0$. In particular, $w_{i}^{-1}\widetilde u^{w'} w_{i} \in U_0^w$.
 By Lemma~\ref{lemdec1}, $[w_{i}^{-1}\widetilde u_{w'}w_{i}]u_{\alpha_{i}} \in U_{0,w}$. This proves $u \in U_0^w U_{0,w}$ and thus $U_0 =  U_0^w U_{0,w}$. 

The other case follows from the equations
$$U_0 =U_0^{-1}= (U_0^w U_{0,w})^{-1}=(U_{0,w})^{-1} (U_0^w)^{-1}=U_{0,w} U_0^w.$$
\end{proof}

The decomposition (\ref{sec3eq1}) of $U_{0}$ in the above lemma extends to its completion $U$. More precisely, 

\begin{corollary}\label{sec3cor1}
For $w\in W$ we have 
$$U=U^{w}U_{w}=U_{w}U^{w}$$
where $U^{w}=U\cap w^{-1}Uw$ and $U_{w}= U_{0,w}$ in $U$.
\end{corollary}
The above decomposition of $U$ is also obtained in slightly different contexts in \cite[Corollary 6.5]{Ga2} and \cite[Lemma 6.1.3]{Ku}.

\section{The Tits building of $G$, a tree}
We can associate Tits buildings $X^{+}$ and $X^{-}$ to each  $BN$--pair  $(G,B^+,N)$ and  $(G,B^-,N)$ of $G$, respectively. The buildings
$X^+$ and $X^-$ are isomorphic as chamber complexes (see \cite[Appendix TKM]{DJ}). We will consider only $X^{+}$ and denote it by $X$. 
 The building $X$ is a simplicial complex of dimension $1$ and $X=X_{q+1}$, the $q+1$-homogeneous tree.

\subsection{Vertices and edges of $X$}
The set $VX$ of vertices of $X$ is defined to be the set of conjugates of $P_1$ and $P_2$ in $G$.  Vertices $Q_1$ and $Q_2$ are connected by an edge if the intersection $Q_1 \cap Q_2$ contains a conjugate of $B$. Let $EX$ denote the set of edges of $X$. It is known that $P_1$ and $P_2$ are not conjugate and that $P_i$, $i=1,2$ and $B$ are self-normalizing in $G$. Thus we have the following bijective correspondences \cite[Section 9]{CG} assigning $gHg^{-1}$ to $gH$ for $H=P_i$, $i=1,2$ and for $H=B$:
\begin{eqnarray*}
VX&\cong& G/P_1\sqcup G/P_2,\\
EX&\cong& G/B.
    \end{eqnarray*}
Using these bijections, we identify the elements of $VX$ and $EX$ with the corresponding cosets. 

There are natural projections
on cosets induced by the inclusion of $B$ in $P_1$ and $P_2$:
$$\pi_i:G/B\longrightarrow G/P_i,\ i=1,2.$$ For a vertex $v \in G/P_i$, $i =1,2$, define $St^X(v):=\pi_i^{-1}(v)$ to be the set  of edges with origin  $v$. 
The following describes how
the cosets
$Bw_1 B$ and
$Bw_2 B$ are indexed modulo
$B$:
\[Bw_{i} B/B=\{\chi_{i}(t) w_{i} B/B \mid t\in \mathrm{k}\} \quad \text{for } i=1,2.
\]
It follows that the edges emanating from $P_1$ and $P_2$ may be indexed as follows:
$$St^X(P_i)=\{B\}\sqcup\{ \chi_{i}(t)w_{i} B/B \mid  t\in \mathrm{k}\} \quad \text{for } i=1,2.$$
The sets $St^X(v)$ of other vertices $v$ are obtained by translating (conjugating) these. 
We have the diagram of tree associated with $G$ in Figure \ref{fig1}, where we use cosets to label vertices.

The coset  $1\cdot B$ corresponds to an edge called the {\it standard simplex} with vertices $P_{1}$ and $P_{2}$. Apartments in $X$ are bi-infinite paths. The {\it
standard apartment}, denoted by ${\mathcal{A}}_0$, in $X$ consists of all Weyl group translates of the standard simplex.  For a parabolic subgroup $P$ of $G$, we will denote the corresponding simplex (vertex or edge) by $\sigma_{P}$.
\begin{figure}[h!]
    \centering
    \includegraphics[width=0.66\textwidth]{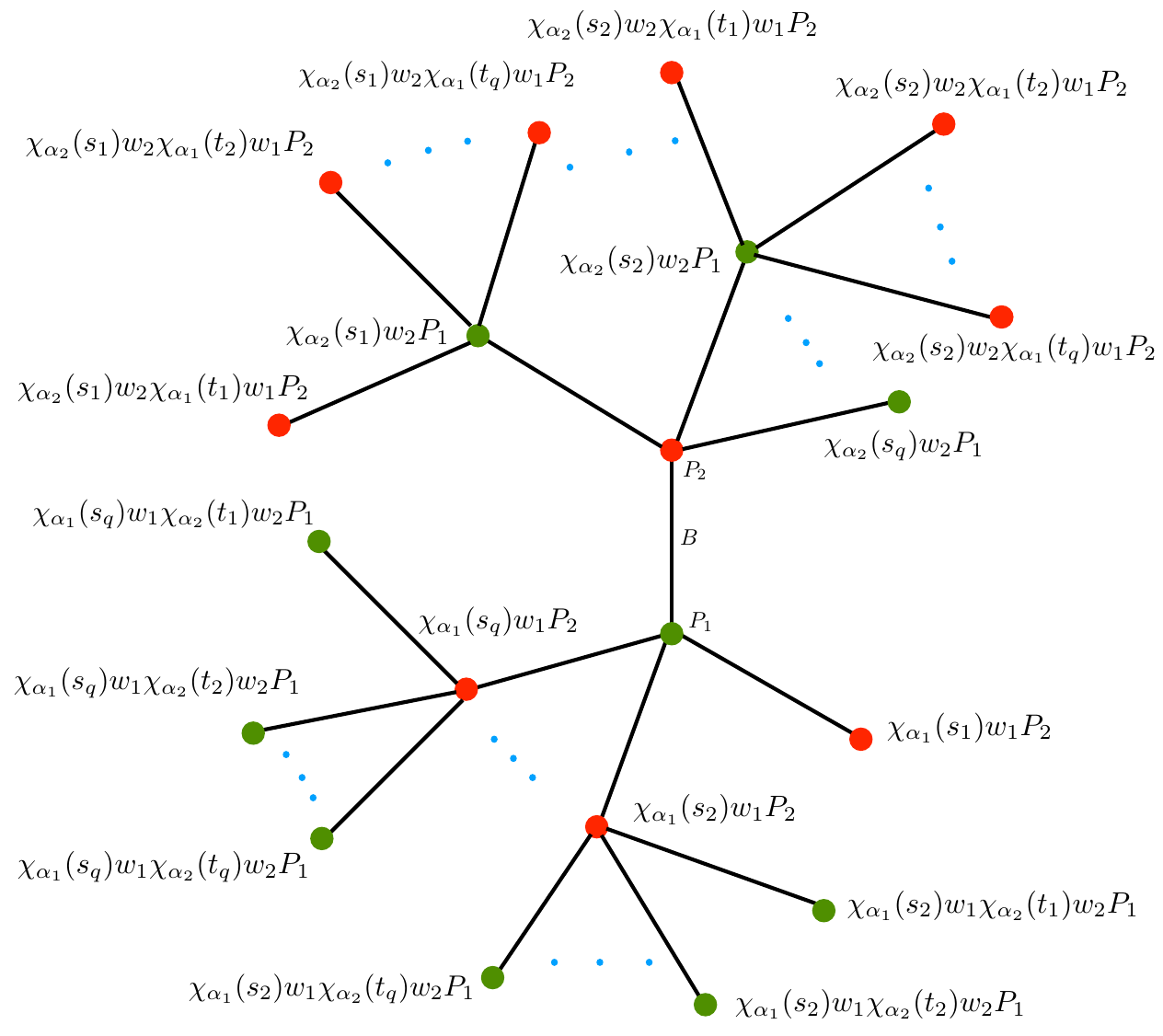}
    \caption{ Tree of $G$ with coset labels. The `dots' indicate that the tree is constructed over the field with $q$ elements.}
    \label{fig1}
\end{figure}

\subsection{Adjacency and labeling}
Define the {\it distance function} $d$ on $X$ to be given by  
$$d\colon VX\times VX\longrightarrow \mathbb{Z}_{\ge 0} $$
such that for all $v, v'\in VX$
$$d(v,v')=\text{number of edges in a shortest path connecting $v$ and $v'$}.$$ The vertices $v$ and $v'$ are said to be {\em adjacent} if $d(v, v')=1$.
For  a vertex $v \in VX$, let $\Omega_{v}$ be the set of vertices adjacent to $v$. That is,
$$\Omega_{v}:=\{y\in VX\mid d(y,v)=1\}.$$
 The following lemma gives an explicit labeling of $\Omega_{v}$.

\begin{lemma}\label{adjverloc}
Let $v \in VX$ be a vertex corresponding to  $gP_{j}$ ($j =1,2$). Then $\Omega_{v}$ is a disjoint union of the two sets given by 
\[\Omega_{v}^{1}:=\{gP_{3-j}\} \qquad \text{ and } \qquad  \Omega_{v}^{q}:= \{g\chi_{\alpha_{j}}(s)w_{j}P_{3-j}\mid s\in \mathrm{k}\}.
\]
\end{lemma}

The statement of Lemma~{\ref{adjverloc}} can be understood through the following local picture in Figure \ref{fig2} of the vertex $v$ and its adjacent vertices when labeled in terms of the corresponding cosets of the parabolic subgroups.

\begin{figure}[h]
    \centering
 \includegraphics[width=0.57\textwidth]{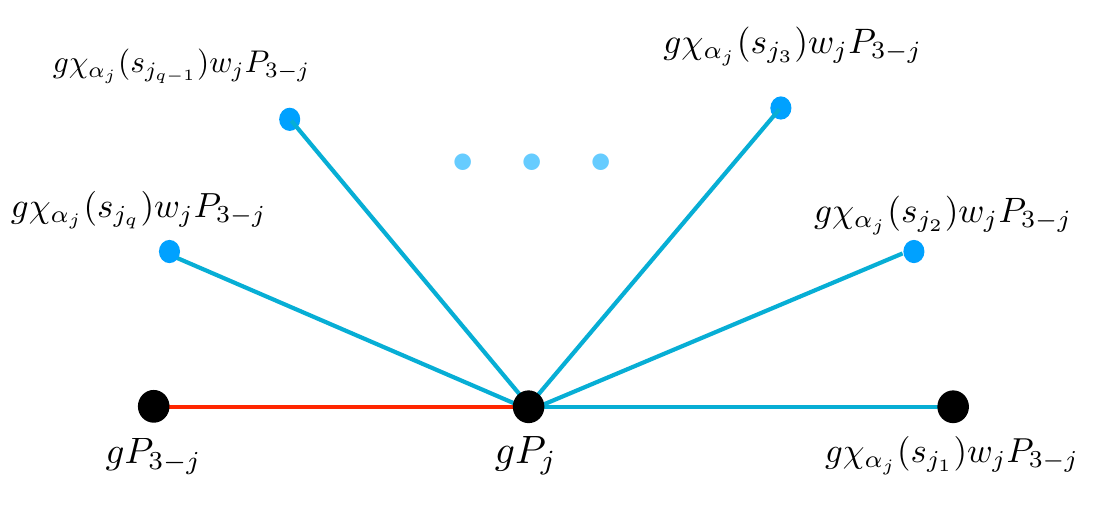}
    \caption{Local Picture}
    \label{fig2}
\end{figure}

\begin{proof}[Proof of Lemma~\ref{adjverloc}]   
Note that
$$ St^X(v)= \{gB\}\sqcup\{ g\chi_{\alpha_j}(s)w_j B/B\mid s\in \mathrm{k}\}. 
$$
Since we have 
\begin{eqnarray*}
&& g\chi_{\alpha_j}(s)w_jB(g\chi_{\alpha_j}(s)w_j)^{-1} \\
&\subset&  g\chi_{\alpha_j}(s)w_j \left( P_{j}\cap P_{3-j} \right)(g\chi_{\alpha_j}(s)w_j )^{-1}\\
&=& \left( g\chi_{\alpha_j}(s)w_jP_{j}(g\chi_{\alpha_j}(s)w_j)^{-1} \right)\cap \left( g\chi_{\alpha_j}(s)w_jP_{3-j}(g\chi_{\alpha_j}(s)w_j )^{-1}\right)\\
&=&gP_{j}g^{-1}\cap (g\chi_{\alpha_{j}}(s)w_{j})P_{3-j}(g\chi_{\alpha_j}(s)w_{j} )^{-1},
\end{eqnarray*}
the vertex $v$ is connected to the vertices (corresponding to) $g\chi_{\alpha_{j}}(s)w_{j}P_{3-j}$ for $s \in \mathrm k$. Similarly, $gBg^{-1}\subset gP_{j}g^{-1}\cap gP_{3-j}g^{-1}$, and $v$ is also connected to $gP_{3-j}$.
This shows $\Omega_{v}=\Omega_{v}^{1}\sqcup \Omega_{v}^{q}$ as desired.
\end{proof}


\section{Spherical Tits system and Iwasawa decomposition for $G$}
As before,  $G$ denotes a rank 2 complete Kac--Moody group over $\mathrm{k}=\mathbb{F}_q$. Let $X$ be the Tits building of $G$, the homogeneous tree of degree $q+1$.

\subsection{Spherical Bruhat decomposition}

It follows from the definition of $X$ that the group $G$ acts on $X$ by conjugation. Let $\mathcal{A}$ be an apartment in $X.$ Fix a `base edge' of $\mathcal{A}$, denoted $\beta(\mathcal{A})$,  and let $g\in G$ be such
that 
\begin{equation} \label{eqn-AB}
g\cdot \beta(\mathcal{A}) = B,
\end{equation}
where $B$ is set to be the base edge $\beta (\mathcal{A}_0)$ of the standard apartment $\mathcal{A}_0$. Such a $g\in G$ exists since $G$ acts transitively on the set of edges \cite{CG}. By Lemma \ref{adjverloc}, we may assume that 
\[ g\cdot Q_1  =  P_1\quad \text{and} \quad g\cdot Q_2  =  P_2, \]
where $Q_1$ and $Q_2$ are the endpoints of $\beta(\mathcal A)$ that are conjugate to $P_1$ and $P_2$, respectively.

We let $\mathcal{A}^{+}$ be the subcomplex of $\mathcal{A}$ 
consisting of the ray joined to $\beta(\mathcal{A})$ at $Q_2$ and
not containing $\beta(\mathcal{A}).$ We let $\mathcal{A}^{-}$ be the
subcomplex of $\mathcal{A}$ consisting of $\mathcal{A}\setminus (\mathcal{A}^{+}\sqcup
\beta(\mathcal{A})).$  Then we have 
$$
\mathcal{A}  =  \mathcal{A}^{+}\sqcup \mathcal{A}^{-}\sqcup \beta(\mathcal{A}).
$$
We present a finite part of $\mathcal{A}_{0}$ and $\mathcal{A}_{0}^{\pm}$ in Figure \ref{fig3.0}.

\begin{figure}[h]
    \centering
    \includegraphics[width=0.8\textwidth]{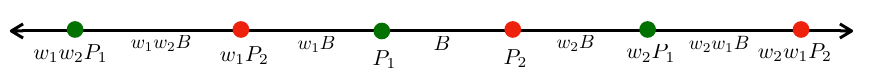}
    \caption{ The labeling of the standard apartment with its positive and negative halves}
    \label{fig3.0}
\end{figure}

We have the following lemma and its corollaries from Lemma~14.1 and Corollaries~14.1 and 14.2 of \cite{CG}, which are slightly modified for our purposes.

\begin{lemma}[\cite{CG}]\label{doubletransitivity}
Let $\mathcal{A}$, $\mathcal{A}_{0}$, $\mathcal{A}^{\pm}$, and $\mathcal{A}_{0}^{\pm }$ be as introduced above and let $g \in G$ be as in \eqref{eqn-AB}. We can find elements
 $b_{1},$ $b_{2}\in U$ such that 
\[
b_{1} { g} \mathcal{A}^{+}  =  \mathcal{A}_{0}^{+}  
\quad \text{ and } \quad  
b_{2}b_{1} { g} \mathcal{A}^{-} = \mathcal{A}_{0}^{-},  
\]
and furthermore $b_{2}$ stabilizes $\mathcal{A}_{0}^{+}$ pointwise so that \ \ $b_{2}b_{1} { g} \mathcal{A}^{+} = \mathcal{A}_{0}^{+}$.
\end{lemma}

Define an equivalence relation on the  set of paths in $X$ as follows: two paths in $X$ are equivalent if their intersection is infinite. An {\it end}  of $X$ is defined to be an equivalence class of semi-infinite rays in $X$. Then there is a 1-1 correspondence between apartments in $X$ and pairs of ends of $X$.

\begin{corollary}[\cite{CG}]\label{TransPi}
The group  { $G$} acts doubly transitively on the set of ends of $X.$
\end{corollary}

\begin{definition}
Define $\mathcal{B}_{1 }$ (resp. $\mathcal{B}_{2}$) to be the stabilizer of the end containing $\mathcal{A}_{0}^{+ }$ (resp. $\mathcal{A}_{0}^{- }$). 
\end{definition}

\begin{corollary}[\cite{CG}]\label{cor5.3}  
We have 
$$
G  =  \mathcal{B}_{i }\sqcup \mathcal{B}_{i }w_{j}\mathcal{B}_{i } \qquad \text{ for }  \ i,j=1,2.
$$
\end{corollary}
This decomposition of $G$ is called  the {\it spherical Bruhat decomposition} of $G$, which we will use in the computation of constant term of the Eisenstein series in Section~\ref{conster}.

\subsection{Explicit description of $\mathcal{B}_{i}$} \label{sphB}
In this subsection we give a decomposition of $\mathcal{B}_i$, $i=1,2$, which will be used later. The set of real roots $\Delta^{\re}$ can be written as a disjoint union
 \begin{eqnarray}\label{rrtbrnch}
 \Delta^{\re}=\Delta^{\re}_1\sqcup\Delta^{\re}_2,
 \end{eqnarray}
 where $\Delta^{\re}_1$ and $\Delta^{\re}_2$ are given by  
\begin{eqnarray*}
  \Delta^{\re}_1\ &:=&\ \{ -\alpha _{2},\ -w_{2}\alpha _{1},\ -w_{2}w_{1} \alpha
_{2},\dots \}\cup \{ \alpha _{1},\ w_{1}\alpha _{2},\ w_{1}w_2 \alpha
_{1},\dots\},\\
\Delta^{\re}_{2}\ &:=&\ \{-\alpha_1,\ -w_1\alpha_2,\ -w_1w_2\alpha_1,\dots\}\cup\{\alpha_2,\ w_2\alpha_1,\
w_2w_1\alpha_2,\dots\}.
\end{eqnarray*}
Let $\mathcal{U}_i$ be the closure of the group  generated by $U_{\alpha}$ with  $\alpha   \in  \Delta^{\re}_i$, $i=1,2$.
  The group ${\mathcal U}_i$ is commutative by Lemma~14.2 in \cite{CG}\footnote{In \cite{CG}, real roots are called Weyl roots.}. 
 Since $$w_1\cdot \alpha_1=-\alpha_1,\quad w_2\cdot \alpha_2=-\alpha_2,$$
$w_1$ and $w_2$ interchange $\Delta^{\re}_{1}$ and $\Delta^{\re}_{2}$.

Define
$$\mathcal T: =\{(w_{1}w_2)^{n}\mid n\in {\mathbb Z}\}$$ to be the subset of $W$ consisting of even-length elements. Then we obtain the following immediate consequences from the definition of $\mathcal T$.

\begin{lemma}\label{Iwalemm1}
\hfill
\begin{itemize}
\item[(a)] For $i=1,2$ and $m\in\mathbb{Z}$, we have $w_{i}(w_{i}w_{3-i})^{m}=(w_{i}w_{3-i})^{-m}w_{i}.$
\item[(b)] The simple reflections $w_{i}$ normalize $\mathcal T$ for $i =1,2$.
\item[(c)] The subgroup $\mathcal T$ normalizes $\mathcal{U}_i$ for $i=1,2$.
\end{itemize}
 
\begin{proof}
We prove (a) and part (b) directly follows from it. 
For $i=1,2$ and $m\in\mathbb{Z},$
\begin{eqnarray*}
w_{i}(w_{i}w_{3-i})^{m}&=&(w_{3-i}w_{i})^{m-1}w_{3-i}
=(w_{i}w_{3-i})^{1-m}w_{3-i}\\
&=&(w_{i}w_{3-i})^{-m}(w_{i}w_{3-i})w_{3-i}
=(w_{i}w_{3-i})^{-m}w_{i}.
\end{eqnarray*}
Part (c) follows from $$
(w_{1}w_2)^{m}\Delta^{\re}_{1}=\Delta^{\re}_{1}\quad \text{and}\quad (w_{1}w_2)^{m}\Delta^{\re}_{2}=\Delta^{\re}_{2}
$$
for any $m\in\mathbb{Z}$.
\end{proof}
\end{lemma}
 
The following decomposition of $\mathcal{B}_{i }$ ($i=1,2$) is proven in \cite[Theorem 14.1]{CG} (See also \cite[Theorem 8.8]{CFF}). 
 Let $\displaystyle{\mathcal{B}_{\mathcal{I}}   :=  \bigcap _{w\in W}wBw^{-1}\subset B}$.
\begin{theorem} \label{thm5.4} For $i=1,2$, we have 
\begin{eqnarray*}
\mathcal{B}_{i }&=&\mathcal{B}_{\mathcal{I}}\mathcal{U}_{i}\mathcal T=\mathcal{B}_{\mathcal{I}}\mathcal T
\mathcal{U}_{i}=\mathcal{U}_{i}\mathcal{B}_{\mathcal{I}}\mathcal T=\mathcal T\mathcal{B}_{\mathcal{I}
}\mathcal{U}_{i}=\mathcal{U}_{i}\mathcal T\mathcal{B}_{\mathcal{I}}=\mathcal T\mathcal{U}_{i}\mathcal{B}_{\mathcal{I} }.
\end{eqnarray*}
\end{theorem}

\subsection{The Iwasawa Decomposition of $G$} \label{IwaG}

\begin{theorem} \label{sppolar}
{ For $i,j =1,2$, we have
\begin{eqnarray*}
    G&=&P_{j}\mathcal{B}_{i } =\mathcal{B}_{i }P_{j}.
\end{eqnarray*}
}

\begin{proof} We prove the assertion of theorem for ${\mathcal B}_{1}$ only. For ${\mathcal B}_{2}$, it follows similarly.
We set
$\beta=\beta(B)$. Then every end of $X$
is represented by a ray beginning at $\beta$. By Corollary~\ref{TransPi} the groups $P_i$, $i=1,2$
act transitively on the ends of $X$. Let
$g\in G$. Then by Lemma~\ref{doubletransitivity}, the end determined by $g \mathcal{A}_{0}^{+}$ is equivalent to the end 
determined by 
$b \mathcal{A}_{0}^{+}$ for some $b\in U\leq P_i$. So $b^{-1}g \mathcal{A}_{0}^{+}$ is 
equivalent to the end determined by 
$\mathcal{A}_{0}^{+}$. Therefore $b^{-1}g\in {\mathcal B}_{1}$, so $g\in K{\mathcal B}_{1}$, 
$K=P_1,P_2$. Similarly $g^{-1}b\in {\mathcal B}_{1}$, so $g\in  {\mathcal B}_{1}K$, 
$K=P_1,P_2$.

\end{proof}
\end{theorem}

Since $\mathcal{B}_{\mathcal{I}}\subset B \subset P_i$ for $i,j=1,2$, the following result is a straightforward consequence of Theorems~\ref{thm5.4} and \ref{sppolar}.

\begin{corollary}\label{Iwasawa} For $i,j=1,2$, the group $G$ admits the following decomposition: 
\[ G=\mathcal{U}_i\mathcal T P_j. \]
\end{corollary}
The above decomposition of $G$ will be called the {\it Iwasawa decomposition} of $G$. Indeed, if we fix $i$ and $j$ and write $\mathcal U =\mathcal U_i$ and $K= P_j$, the decomposition becomes \[ G = \mathcal U \mathcal T K, \] which can be considered as an analogue of the Iwasawa decomposition of a $p$-adic reductive group, and we may thus write $g\in G$ as 
$$
g=u_{g}t_{g}k_{g}, \quad u_{g}\in \mathcal{U},  \ t_{g}\in \mathcal T,  \ k_{g}\in K.
$$

\medskip

The following lemma shows that we have uniqueness of the $\mathcal T$--component. 

\begin{lemma}\label{uniquenessoft}  Let $g\in G$. For fixed $i$ and $j$, let  $\mathcal{U}=\mathcal{U}_i$ and $K=P_j$. If $g=utk=u't'k'$ with $u, u'\in \mathcal{U}$, $t, t'\in \mathcal T$ and $k,k'\in K$, then $t=t'$.
\end{lemma}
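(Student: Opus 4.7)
The plan is to extract $t$ from $g$ purely through the action of $G$ on the Tits tree $X$, using a Busemann-type cocycle attached to the end $\mathcal{E}_0^+$. The guiding idea is that the $\mathbb{A}$-factor $t$ in the Iwasawa decomposition $G = \mathcal{U}_i \mathbb{A} K$ records the signed translation of $g\cdot v_0$ along the standard apartment $\mathcal{A}_0$ toward $\mathcal{E}_0^+$, which is intrinsic to $g$.

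First, I would apply both decompositions to the basepoint $v_0 = 1\cdot P_j \in VX$, which is fixed by $K = P_j$. Since $k\cdot v_0 = v_0 = k'\cdot v_0$, this yields
\[
u\, b_I\, t\cdot v_0 \;=\; u'\, b_I'\, t' \cdot v_0.
\]
Next, I would invoke the Busemann function $\beta = \beta_{\mathcal{E}_0^+}$ attached to the end $\mathcal{E}_0^+$, normalized so that $\beta(v_0) = 0$. Since $\mathcal{B}$ is the stabilizer of $\mathcal{E}_0^+$, every $h \in \mathcal{B}$ acts as an isometry fixing this end, and there is a well-defined homomorphism $c\colon \mathcal{B}\to\mathbb{Z}$ with $\beta(h\cdot v) = \beta(v) + c(h)$ for all $v \in VX$.

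Using the decomposition $\mathcal{B} = \mathcal{U}_i \mathbb{A}\mathcal{B}_\mathcal{I}$ established above, I would verify three facts: first, $c$ vanishes on $\mathcal{B}_\mathcal{I} = \cap_{w\in W} wBw^{-1}$, because this group fixes $\mathcal{A}_0$ pointwise; second, $c$ vanishes on $\mathcal{U}_i$, because each element of $\mathcal{U}_i$ fixes a sufficiently deep vertex of $\mathcal{A}_0^+$ and therefore acts horocyclically at $\mathcal{E}_0^+$; and third, $c$ is injective on $\mathbb{A} = \langle w_1w_2\rangle$, since $w_iw_{3-i}$ is a hyperbolic isometry of $X$ whose axis is $\mathcal{A}_0$ and whose translation length is $2$, giving $c\bigl((w_iw_{3-i})^n\bigr) = \pm 2n$. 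Applying $\beta$ to both sides of $ub_It\cdot v_0 = u'b_I't'\cdot v_0$ then gives $c(ub_It) = c(u'b_I't')$, hence $c(t) = c(t')$, and finally $t = t'$.

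The principal obstacle is verifying $c|_{\mathcal{U}_i} = 0$. The set $\Delta^{re}_i$ contains negative real roots such as $-\alpha_{3-i}$, and the associated root groups do not lie in $B$, so they do not a priori fix the base edge. One must instead show that every $U_\alpha$ with $\alpha \in \Delta^{re}_i$ fixes some vertex far out along $\mathcal{A}_0^+$; together with the commutativity of $\mathcal{U}_i$ noted earlier, this forces horocyclic action at $\mathcal{E}_0^+$. A purely algebraic alternative would be to establish $\mathcal{B}\cap P_j = \mathcal{U}_i\mathcal{B}_\mathcal{I}$ together with $\mathbb{A}\cap \mathcal{U}_i\mathcal{B}_\mathcal{I} = \{1\}$, from which uniqueness of $t$ follows by a short cancellation in $\mathcal{B}$.
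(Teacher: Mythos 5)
Your proof is correct, and it ultimately rests on the same geometric facts as the paper's own argument: $K=P_j$ fixes a base vertex, $\mathcal{B}_{\mathcal{I}}=\cap_w wBw^{-1}$ fixes the standard apartment pointwise, elements of $\mathcal{U}_i$ fix vertices sufficiently far out toward $\mathcal{E}_0^+$, and $(w_iw_{3-i})^n$ translates $\mathcal{A}_0$ by $2n$. The difference is in the packaging. The paper conjugates $b_I$ across $t$ into $K$, acts on a vertex $\xi$ fixed by $K$ to obtain $t''u'''\cdot\xi=\xi$ with $t''=t'^{-1}t$ and $u'''\in\mathcal{U}$, and then evaluates on a far-out vertex $\eta$ stabilized by $u'''$, forcing $t''=1$ by a translation-length comparison. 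You instead encode ``net translation toward $\mathcal{E}_0^+$'' once and for all as the Busemann homomorphism $c:\mathcal{B}\to\mathbb{Z}$ and read off $t=t'$ from $c(t)=c(t')$; the homomorphism property of $c$ replaces the paper's conjugation manipulations, and the hypothesis $ub_It,\,u'b_I't'\in\mathcal{B}$ enters exactly where it should, namely so that $c$ is defined on both elements. This is a cleaner and more robust formalization. The one point requiring the same care in both proofs is the horocyclic action of $\mathcal{U}_i$, which you rightly flag: each $U_\alpha$ with $\alpha\in\Delta^{re}_i$ fixes pointwise the half-apartment of $\mathcal{A}_0$ containing $\mathcal{E}_0^+$ determined by $\alpha$ (this is precisely what the paper invokes when asserting that $u'''$ stabilizes $\eta$), so $c$ vanishes on the group these root subgroups generate; to pass to the closure $\mathcal{U}_i$ you should add that $c$ is locally constant (vertex stabilizers are open), a step the paper also elides. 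With that remark supplied, your argument is complete.
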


\begin{proof}
Let $\xi$ be a vertex on the standard apartment fixed by $K$.
Then
$$ut\cdot\xi=u't'\cdot\xi$$ so
$$t'^{-1}u'^{-1}ut\cdot\xi=\xi.$$
Thus
$$t'^{-1}u''t\in K$$
for $u''=u'^{-1}u$. But
$$t'^{-1}u''t=t'^{-1}t(t^{-1}u''t).$$
Writing $u'''=(t^{-1}u''t)$ and $t''=t'^{-1}t$, we have $t''u'''\cdot\xi=\xi$.

Let $\eta$ be a vertex sufficiently far out on the positive half of the standard apartment. We recall that  there is a 1-1 correspondence between apartments in $X$ and pairs of ends of $X$. Then $\eta$ belongs to the end determined by the standard apartment and thus $u'''\in Stab(\eta)$. 
The action of $u'''$ on the end determined by the standard apartment permutes the lines that meet at this end. Sufficiently far out,  $\eta$ is on the intersection of these lines. Since $\eta$ is fixed by $u'''$, we have $u'''\eta=\eta$. Left multiplying by $t''$, we obtain $$t''u'''\cdot\eta=t''\eta.$$
But $t''$ is a translation along the standard apartment and $t''u'''$ preserves translation length. Thus we have
$$t''u'''\cdot\xi=\xi$$ as above, and 
$$t''u'''\cdot\eta=t''\eta.$$
But this implies that 
 $t''=1$ and hence $t=t'$.
\end{proof}

\begin{proposition}\label{bounded} Let $\Gamma$ be the parabolic subgroup $P_1^-$ of  $G$.  For fixed $i$ and $j$, let  $\mathcal{U}=\mathcal{U}_i$ and $K=P_j$. Suppose
$\gamma=u_{\gamma}t_{\gamma}k_{\gamma} \in\Gamma$ with 
$u_{\gamma}\in {\mathcal U}$, $t_{\gamma}=(w_iw_{3-i})^{n_i(\gamma)}\in \mathcal T$ and $k_{\gamma}\in K$. Then 
$n_i(\gamma)$ is bounded below, that is, there
exists $n_0\in{\mathbb Z}$ such that $n_i(\gamma)\geq n_0$ for all 
$\gamma\in\Gamma$.\end{proposition}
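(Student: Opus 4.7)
My plan is to give $n_i(\gamma)$ a geometric interpretation as a Busemann function value on the tree $X$, and then use discreteness and finite covolume of $\Gamma$ to obtain the lower bound.

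Let $\xi = 1\cdot P_j \in X$ be the vertex fixed by $K = P_j$, and let $b_+\colon VX\to\mathbb{Z}$ denote the Busemann function at the end $\mathcal{E}_0^+$, normalized so that $b_+(\xi)=0$. For $\gamma = u_\gamma a_\gamma k_\gamma \in \Gamma$ we have $\gamma\xi = u_\gamma a_\gamma\xi$, since $k_\gamma$ fixes $\xi$. Because $\mathcal{U}_i \subset \mathcal{B}$ stabilizes $\mathcal{E}_0^+$, the element $u_\gamma$ preserves $b_+$, so $b_+(\gamma\xi) = b_+(a_\gamma\xi)$. The element $w_iw_{3-i}$ acts as a hyperbolic translation of length $2$ along $\mathcal{A}_0$; choosing the sign convention so that it translates away from $\mathcal{E}_0^+$, one obtains
\[
b_+(\gamma\xi) \;=\; 2\,n_i(\gamma).
\]
The proposition is thus equivalent to the statement that the orbit $\Gamma\cdot\xi$ does not escape into the end $\mathcal{E}_0^+$; equivalently, that $b_+$ is bounded below on $\Gamma\cdot\xi$.

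Next I would prove this boundedness from discreteness and finite covolume of $\Gamma$. Suppose for contradiction that $\{\gamma_m\}\subset\Gamma$ satisfies $n_i(\gamma_m)\to-\infty$. By Lemma~\ref{uniquenessoft} the $\mathbb{A}$-component is well-defined, so the Iwasawa cells $\mathcal{U}_i(w_iw_{3-i})^{n_i(\gamma_m)}K$ are pairwise disjoint, and the vertices $\gamma_m\xi$ lie on horocycles based at $\mathcal{E}_0^+$ of unboundedly decreasing $b_+$-level. Since $B^+$ is a compact-open profinite neighborhood of the identity in $G$, $\Gamma\cap g B^+ g^{-1}$ is finite for every $g\in G$, so the open neighborhoods $\gamma_m B^+$ meet the fundamental domain in finitely many points only. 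Writing Haar measure in Iwasawa coordinates as $d\mu = \delta(a)\,du\,da\,dk$ for a modular character $\delta\colon\mathbb{A}\to\mathbb{R}_{>0}$, and summing the measures of disjoint translates associated with the $\gamma_m$ inside a fundamental domain for $\Gamma\backslash G$, produces a total that grows without bound, contradicting finite covolume.

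The main obstacle is making the Haar-measure/modular-character step rigorous in the Kac--Moody setting, where one must identify $\delta$ on $\mathbb{A}$ and control the measure of the $\mathcal{U}_i$-factor modulo $\Gamma\cap\mathcal{U}_i$. A cleaner alternative, which mirrors the rank-one case $\Gamma = SL_2(\mathrm{k}[t])\subset SL_2(\mathrm{k}((t^{-1})))$ — where the analogous bound $n\geq 0$ is immediate from the non-positive $t^{-1}$-adic valuations of polynomial entries — is to pass to the explicit negative-parabolic lattices $\Gamma = B^-$ or $\Gamma = P_i^-$. Writing $\gamma = h u^- \in B^-$ with $h\in H$, $u^-\in U^-$, and using the commutation relations (R1)--(R3) between the negative root groups and elements of $\mathcal{U}_i$, one tracks how the $\mathbb{A}$-component emerges from the negative real roots, obtaining an explicit lower bound $n_0$ for $n_i(\gamma)$ without invoking the measure-theoretic argument.
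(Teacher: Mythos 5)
Your reduction of the statement to a bound on the Busemann level of the orbit $\Gamma\cdot\xi$ is a reasonable reformulation, but the finite-covolume argument you build on it has a fatal flaw: the elements $\gamma_m$ lie in $\Gamma$, so the left translates $\gamma_m B^+$ all have the \emph{same} image $\Gamma B^+$ in $\Gamma\backslash G$. There are no ``disjoint translates associated with the $\gamma_m$ inside a fundamental domain'' whose measures could be summed, and no contradiction with finite covolume can be extracted this way. (Nonuniform lattices do have orbits whose Iwasawa $\mathbb{A}$-coordinate is unbounded in one direction --- finiteness of covolume alone does not forbid an orbit from entering a cusp. The content of the proposition is the \emph{one-sided} bound, and the mechanism behind it is discreteness, not volume; compare the classical bound $\mathrm{Im}(\gamma z)\le C(z)$ for $SL_2(\mathbb{Z})$, which comes from $|cz+d|^2$ being bounded below over nonzero integer pairs.)

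The paper's proof uses exactly this discreteness mechanism, and it is the idea missing from your write-up: fix $1\neq\beta\in\Gamma\cap\mathcal{U}_i$ (nontrivial for the lattices in question, e.g.\ $\Gamma=P_1^-$), say $\beta\in U_\alpha$ with $\alpha\in\Delta^{re}_i$. Since $\mathcal{U}_i$ is abelian, $u_{\gamma_n}$ centralizes $\beta$, so $\gamma_n^{-1}\beta\gamma_n=k_{\gamma_n}^{-1}\bigl(a_{\gamma_n}^{-1}\beta a_{\gamma_n}\bigr)k_{\gamma_n}$. If $n_i(\gamma_n)\to-\infty$, the height of $a_{\gamma_n}^{-1}(\alpha)$ tends to infinity, so $a_{\gamma_n}^{-1}U_\alpha a_{\gamma_n}\to\{1\}$ in the weight topology of the complete group; since the $k_{\gamma_n}$ range over the compact group $K$, the conjugates $\gamma_n^{-1}\beta\gamma_n$ are nontrivial elements of $\Gamma$ converging to the identity, contradicting discreteness. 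Your third paragraph (explicit computation with the relations (R1)--(R3) for $\Gamma=B^-$ or $P_i^-$) is a plausible alternative for those specific lattices and mirrors the $SL_2(\mathrm{k}[t])$ valuation argument, but it is only sketched; as it stands the proposal does not contain a complete proof.
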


\begin{proof} We assume on contrary that $n_i(\gamma)$ is not bounded below. Let $1\neq \widetilde \gamma\in \Gamma\cap{\mathcal U}$. Hence we may 
suppose  that for each $n\in\mathbb{Z}$ there exists
$\gamma_n\in \Gamma$ with $n_i(\gamma_{n})<n$. We have
$$\gamma_n^{-1}\widetilde \gamma\gamma_n= k_{\gamma_n}^{-1}t_{\gamma_n}^{-1}\widetilde \gamma t_{\gamma_n}k_{\gamma_n}\in\Gamma \bsl\{1\},$$
since $u_{\gamma}$ and $\widetilde \gamma$ both belong to $\Gamma\cap\mathcal{U}$ and so $u_{\gamma}$ centralizes $\widetilde \gamma$. We may take $\widetilde \gamma\in U_{\alpha}\subset {\mathcal 
U}$ for some $\alpha\in \Delta^{\re}_i$. Now 
$t_{\gamma_n}=(w_iw_{3-i})^{n_i(\gamma_n)}$ acts on the set of roots   $\Delta^{\re}_i$ by translation. The height 
of $t_{\gamma_n}^{-1}(\alpha)$ grows as $n\to-\infty$. If $\alpha$ is positive, then a sufficiently high element $t_{\gamma_n}^{-1}(\alpha)$ will flip $\alpha$ to a negative root and thus
$t_{\gamma_n}^{-1}U_{\alpha}t_{\gamma_n}\to \{1\}$ in $G$. If $\alpha$ is a negative root, the same conclusion holds.
But if $\gamma_n^{-1}\widetilde \gamma\gamma_n =1$, then $\widetilde \gamma=1$. Since we chose $\widetilde \gamma\neq 1$, we have a contradiction.
\end{proof}

\subsection{Iwasawa cells and vertices}
For $i=1,2$, we introduce the following sets:
\[
I_{i}:=\left(\bigcup_{x\in \mathcal{U}_{i}}\bigcup_{n\in \mathbb{Z}_{\ge 0}}x(w_{i}w_{3-i})^{n}P_{1}\right)\bigcup \left(\bigcup_{x\in \mathcal{U}_{i}}\bigcup_{n\in \mathbb{Z}_{\ge 0}}x(w_{i}w_{3-i})^{n}P_{2}\right)\label{eq5.1}
\]
The Iwasawa decomposition in Corollary \ref{Iwasawa} of $G$ that $I_1=G$ and $I_2=G$ (as sets). Thus we may write (with some redundancy):
\begin{eqnarray}\label{eq5.3}
G&=&I_{1}\cup I_{2}.
\end{eqnarray}

Since the vertices of the building are defined to be $G/P_1\sqcup G/P_2$, we may relabel the set of vertices as follows
\[
\left(\bigcup_{x\in \mathcal{U}_{i}}x\mathcal TP_{1}\right)/P_1\quad \bigsqcup \quad \left(\bigcup_{y\in \mathcal{U}_{i}}y\mathcal TP_{2}\right)/P_2.
\]
This choice of labeling of the vertices will be convenient for the rest of this section.

The next result gives a labeling of $X$ which allows us to relate the $\mathcal T$-component of the Iwasawa cells and the vertices of $X$. 
 For $w\in W$, we recall the subgroup $U_{0,w}$ of $U_{0}$ and its isomorphic image $U_{w}$ in the completion $U$ of $U_{0}$, as introduced in Section~\ref{Udec}. 
\begin{lemma}\label{conlem2}
If $w\in W$ has the reduced decomposition $w=w_{i_{1}}w_{i_{2}}\dots w_{i_{k}}$, then $U_{w}\subset {\mathcal U}_{i_{k}}$, for $i_{k}=1,2$. 
\begin{proof}
 The assertion of lemma is an implication of the explicit description of the set $S^{+}_{w}$, which makes it a subset of $\Delta^{\re}_{i_{k}}$, for $i_{k}=1,2$.
 \end{proof}
\end{lemma}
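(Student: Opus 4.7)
The plan is to reduce the containment $U_{0,w} \leq {\mathcal U}_{i_k}$ to the combinatorial inclusion $S^+_w \subseteq \Delta^{re}_{i_k}$: by definition $U_{0,w}$ is generated by $\{U_\beta \mid \beta \in S^+_w\}$, while ${\mathcal U}_{i_k}$ is the closure of the group generated by $U_\alpha$ for $\alpha \in \Delta^{re}_{i_k}$, so the set inclusion immediately yields the group inclusion. Passing to the completion $U_w \leq {\mathcal U}_{i_k}$ is then automatic from Remark~\ref{rem2}, which identifies $U_{0,w}$ with $U_w$ inside $G$.

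To prove $S^+_w \subseteq \Delta^{re}_{i_k}$ I would argue by induction on $k = \ell(w)$, writing down the standard explicit description of the inversion set
\begin{equation*}
S^+_w = \{\alpha_{i_1},\ w_{i_1}\alpha_{i_2},\ w_{i_1}w_{i_2}\alpha_{i_3},\ \ldots,\ w_{i_1}\cdots w_{i_{k-1}}\alpha_{i_k}\},
\end{equation*}
which follows by iterating either the left recursion from the Remark preceding Lemma~\ref{lemdec1} or its symmetric right analogue. The key combinatorial input is the observation that each of $w_1$ and $w_2$ interchanges the two sets $\Delta^{re}_1$ and $\Delta^{re}_2$; this can be read directly from their defining lists, since applying $w_1$ to the positive alternating products $(w_2 w_1)^n \alpha_2$ and $(w_2 w_1)^n w_2 \alpha_1$ in $\Delta^{re}_2$ yields the positive entries of $\Delta^{re}_1$, while the negative entries of $\Delta^{re}_2$ map under $w_1$ to the remaining entries of $\Delta^{re}_1$ with appropriate sign changes at the boundary. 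Combining this swap with the alternation of indices forced in any reduced rank $2$ expression pushes every element of $S^+_w$ into the appropriate $\Delta^{re}_i$.

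The main obstacle is bookkeeping: one must carefully track indices and signs to confirm that the alternating patterns defining $\Delta^{re}_i$ absorb each successive prefix-image $w_{i_1}\cdots w_{i_{j-1}}\alpha_{i_j}$, taking special care at the transition between the positive and negative halves of the list. Once the swap property of simple reflections is isolated, however, the induction is short; and with the set inclusion in hand, the group inclusion and its completion version follow from the definitions of $U_{0,w}$ and ${\mathcal U}_{i_k}$ together with the finite-generation argument of Remark~\ref{rem2}.
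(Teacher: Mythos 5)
Your overall route is the same as the paper's: reduce the lemma to the root-set inclusion for $S^{+}_{w}$, deduce the group inclusion from the definitions of $U_{0,w}$ and $\mathcal{U}_{i}$, and pass to the completion via Remark~\ref{rem2}; your explicit enumeration of the inversion set and the observation that $w_{1}$ and $w_{2}$ interchange $\Delta^{re}_{1}$ and $\Delta^{re}_{2}$ are exactly the content hiding behind the paper's phrase ``explicit description of $S^{+}_{w}$''.

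There is, however, a genuine problem with your final step as announced. The enumeration you write down, $S^{+}_{w}=\{\alpha_{i_{1}},\, w_{i_{1}}\alpha_{i_{2}},\,\dots,\, w_{i_{1}}\cdots w_{i_{k-1}}\alpha_{i_{k}}\}$, consists of precisely the first $k$ entries of the positive half of $\Delta^{re}_{i_{1}}$, and the left-peeling induction you propose (using $S^{+}_{w}=\{\alpha_{i_{1}}\}\cup w_{i_{1}}S^{+}_{w'}$ together with $w_{i_{1}}\Delta^{re}_{i_{2}}=\Delta^{re}_{i_{1}}$) proves $S^{+}_{w}\subseteq\Delta^{re}_{i_{1}}$, i.e.\ the index is the \emph{first} letter of the reduced word, not the last one that you (following the printed statement) set out to reach. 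For even $k$ the two indices differ and the claimed inclusion into $\Delta^{re}_{i_{k}}$ actually fails: for $w=w_{1}w_{2}$ one has $\alpha_{1}\in S^{+}_{w}$, yet $\alpha_{1}\notin\Delta^{re}_{2}$, since that set contains $-\alpha_{1}$ and all of its positive members have strictly positive $\alpha_{2}$-coefficient. So the sentence ``pushes every element of $S^{+}_{w}$ into the appropriate $\Delta^{re}_{i}$'' cannot be completed with $i=i_{k}$; carried out honestly, your bookkeeping lands in $\Delta^{re}_{i_{1}}$. The discrepancy lies in the index of the statement rather than in your machinery: for odd $k$ one has $i_{1}=i_{k}$ (the case relevant to Proposition~\ref{realres}, where $w=(w_{i}w_{3-i})^{n-1}w_{i}$), and in Proposition~\ref{iwacor}, where even-length $w=(w_{i}w_{3-i})^{n}$ occurs, the containment actually used is again the first-letter one. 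You should therefore prove and record the inclusion with $i_{1}$ in place of $i_{k}$ (or restrict to odd length); with that correction your induction, the inclusion $U_{0,w}=\langle U_{\beta}\mid\beta\in S^{+}_{w}\rangle\le\mathcal{U}_{i_{1}}$, and the passage to $U_{w}$ via Remark~\ref{rem2} all go through and deliver what the paper's one-line proof intends.
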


 \begin{proposition}\label{iwacor}
Every vertex on $X$ corresponds to a coset in the Iwasawa decomposition of $G$.
\end{proposition}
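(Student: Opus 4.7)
The plan is to read off the proposition directly from the Iwasawa decomposition $G=\mathcal{U}_i\mathbb{A}K$ of Corollary~\ref{Iwasawa}, choosing $K=P_j$ so that it matches the parabolic orbit of the vertex in question. Given any vertex $v$ of $X$, I would write $v=gP_j$ for some $g\in G$ and $j\in\{1,2\}$. Applying Iwasawa with $\mathcal{U}_1$ gives $g=u_1 a k_1$ with $u_1\in\mathcal{U}_1$, $a=(w_1 w_2)^n\in\mathbb{A}$, and $k_1\in P_j$, whence $gP_j=u_1(w_1 w_2)^n P_j$. If $n\geq 0$, this coset appears in the union defining $I_1$ in (\ref{eq5.1}), and we are done.

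If $n<0$, I would additionally apply the second Iwasawa decomposition $g=u_2 a' k_2$ with $u_2\in\mathcal{U}_2$, $a'\in\mathbb{A}$, $k_2\in P_j$. Granting that $a=a'$, we may rewrite the common value as $a=(w_1 w_2)^n=(w_2 w_1)^{-n}$ with $-n>0$, which realises $gP_j=u_2(w_2 w_1)^{-n}P_j$ as a coset in $I_2$ in the sense of (\ref{eq5.2}). Combined with (\ref{eq5.3}), this covers every vertex.

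The main obstacle is justifying that $a=a'$ even though $u_1$ and $u_2$ come from different unipotent subgroups $\mathcal{U}_1$ and $\mathcal{U}_2$. Lemma~\ref{uniquenessoft} is stated with a single fixed $\mathcal{U}=\mathcal{U}_i$, but on inspection its proof uses only two structural properties of the elements $u,u'$: that $u'^{-1}u$ fixes a vertex $\xi$ stabilized by $K$, and that $u'^{-1}u$ stabilizes every vertex sufficiently far out along the positive ray $\mathcal{A}_0^{+}$ (so that the translation-length argument at $\eta$ forces the $\mathbb{A}$-factors to agree). Both $\mathcal{U}_1$ and $\mathcal{U}_2$ lie in $\mathcal{B}$ and hence both stabilize the end $\mathcal{E}_0^{+}$, so the argument of Lemma~\ref{uniquenessoft} goes through verbatim for the pair $(u_1,u_2)$, delivering $a=a'$.

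With this minor extension of the uniqueness lemma in hand, the rest of the argument is bookkeeping: split into the two cases $n\geq 0$ and $n<0$, and in each case exhibit $gP_j$ as one of the cosets on the right-hand side of (\ref{eq5.1}) or (\ref{eq5.2}), with the appropriate choice of $P_1$ versus $P_2$ determined by $j$.
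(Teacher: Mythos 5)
Your argument is essentially correct, but it takes a genuinely different route from the paper. The paper proves Proposition~\ref{iwacor} through the Bruhat decomposition: starting from $g\in BwB$, it uses Corollary~\ref{sec3cor1} to write $BwB=U_{w}wB$, absorbs a trailing $w_{j}$ into $P_{j}$ to make the Weyl element of even length, and then invokes Lemma~\ref{conlem2} ($U_{0,w}\le\mathcal{U}_{i_{k}}$, where $i_k$ is the last letter of a reduced word for $w$) to place the unipotent part in the correct $\mathcal{U}_{i}$. You instead read the statement off the Iwasawa decomposition of Corollary~\ref{Iwasawa} directly, handling the sign of the $\mathbb{A}$-exponent by comparing the $\mathcal{U}_{1}$- and $\mathcal{U}_{2}$-decompositions of the same element and arguing that their $\mathbb{A}$-components coincide. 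Your route is shorter and more geometric (it is really a proof of the non-negativity constraint implicit in (\ref{eq5.1})--(\ref{eq5.2})), but it yields only the existence statement; the paper's Bruhat-based proof additionally produces the explicit dictionary between Bruhat labels and Iwasawa labels (the Corollary on even-length $w'$, Figure~\ref{fig3}) and is reused verbatim in the proof of Proposition~\ref{realres}, so the extra work is not wasted.

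One soft spot: your justification for extending Lemma~\ref{uniquenessoft} to the mixed element $u_2^{-1}u_1$ is that ``both $\mathcal{U}_1$ and $\mathcal{U}_2$ lie in $\mathcal{B}$ and hence both stabilize the end $\mathcal{E}_0^{+}$.'' Stabilizing the end is not the property the translation-length argument needs --- the elements of $\mathbb{A}\subset\mathcal{B}$ also stabilize $\mathcal{E}_0^{+}$ yet fix no vertex far out, and if end-stabilization sufficed the same argument would force $t''=1$ vacuously. What you actually need is that $u_2^{-1}u_1$ fixes pointwise a subray of $\mathcal{A}_0^{+}$; this holds because each $\mathcal{U}_i$ is (the closure of) a group generated by root subgroups $U_{\alpha}$, $\alpha\in\Delta^{re}_i$, each of which fixes a half-apartment containing a subray toward $\mathcal{E}_0^{+}$, and because $\mathbb{A}$ normalizes both $\mathcal{U}_1$ and $\mathcal{U}_2$ so that the conjugate $u'''=t^{-1}u''t$ retains this property. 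With that correction the cross-decomposition uniqueness $a=a'$ does follow by the paper's own argument, and the rest of your case analysis is sound.
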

\noindent Proposition~\ref{iwacor} provides us a relationship between the Bruhat labels and Iwasawa labels of the vertices of the tree. The Iwasawa labels are given in Figure~\ref{fig3} ({\it cf}. Figure~\ref{fig1}).

\begin{figure}[h]
    \centering
    \includegraphics[width=0.8\textwidth]{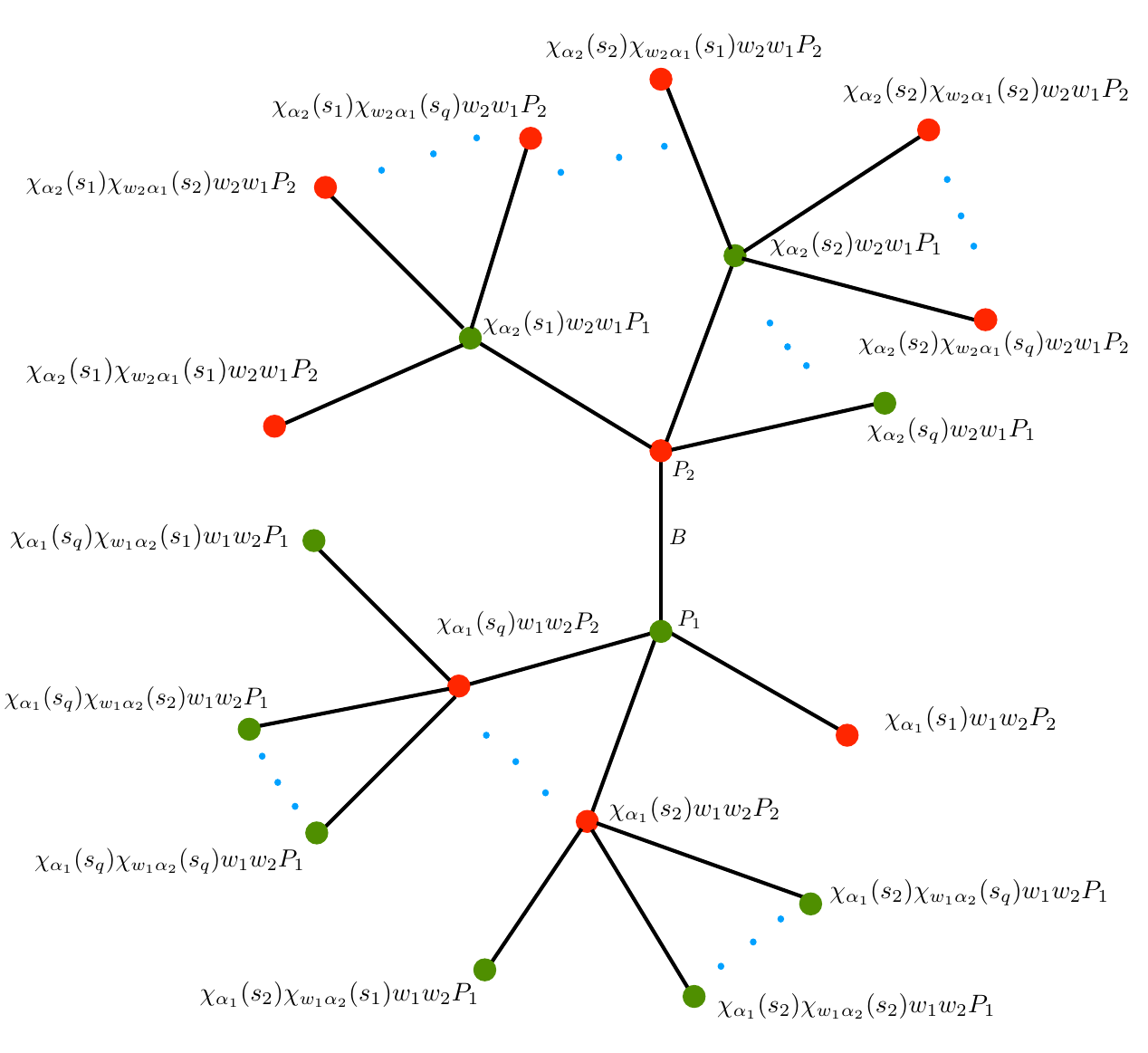}
    \caption{The tree labelled by Iwasawa Cells. The Iwasawa labels contain Weyl group elements of even lengths.}
    \label{fig3}
\end{figure}

We use the decomposition (\ref{eq5.3}) of $G$ to give a proof of Proposition~\ref{iwacor}.
\begin{proof} We show that each vertex on $X$ corresponds to a coset in the above decompositions. Let $v$ be a vertex on $X$ and let  $\sigma_{gP_{j}}$ the corresponding simplex of $v$, for $j=1,2$ and $g\in G$. Let $g\in BwB$ for some $w\in W$ with $\ell(w)=k$, where $\ell$ is the length function as defined in (\ref{lengw}) and the reduced decomposition $w=w_{i_{1}}w_{i_{2}}\dots w_{i_{k}}$. Corollary~\ref{sec3cor1} implies that
\begin{eqnarray}\label{lem12eq1}
BwB&=&UHwB=UwB=U_{w}U^{w}wB=U_{w}w(w^{-1}U^{w}w)B=U_{w}wB
\end{eqnarray}
In the second last equality, we used the fact that $w^{-1}U^{w}w\subset U$.
 Write $g=u_{w}wb$ for some $u_{w}\in U_{w}$ and $b\in B$.  Then $gP_{j}=u_{w}wbP_{j}= u_{w}wP_{j}$. Next, if $k$ is even, then $w=(w_{2}w_{1})^{n}$ for some $n\in \mathbb{Z}$ and hence $gP_{j}=u_{w}(w_{2}w_{1})^{n}P_{j}$. If $k$ is odd, then we write $w'=ww_{j}$ and $w'=(w_{2}w_{1})^{n}$ for some $n\in \mathbb{Z}\setminus\{0\}$ and $u_{w}w'P_{j}=u_{w}wP_{j}=gP_{j}.$
 
In both cases Lemma~\ref{conlem2} implies that $gP_{j}\in \mathcal{U}_{i}\mathcal TP_{j}$ for $i, j=1,2$, and this completes the proof.
\end{proof}

\begin{corollary} For all $g\in G$, there exists $w'\in W$ such that $gP_{j}=uw'P_{j}$ for some $u\in  \mathcal{U}_{i}$, where $\ell(w')$ is even.
\end{corollary}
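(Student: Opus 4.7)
The corollary is essentially a restatement of Proposition \ref{iwacor} with the Iwasawa $\mathbb{A}$-component expressed explicitly as an even-length Weyl element, so I would extract the result directly from what the proposition's proof already establishes.

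My plan is as follows. First I would apply Proposition \ref{iwacor} to the given $g \in G$: for fixed $i, j \in \{1,2\}$, the proposition gives that $gP_j$ lies in $\mathcal{U}_i \mathbb{A} P_j$, so there exist $u \in \mathcal{U}_i$, $t \in \mathbb{A}$, and $p \in P_j$ with $g = utp$, and consequently $gP_j = ut P_j$. Next, by the definition $\mathbb{A} = \{(w_1 w_2)^n \mid n \in \mathbb{Z}\}$, the element $t$ is (a representative in $N$ of) an element of $W$ of the form $w' = (w_1 w_2)^n$ for some $n \in \mathbb{Z}$. Since $W$ is the infinite dihedral Weyl group attached to the symmetric generalized Cartan matrix $A$ (affine when $m=2$, hyperbolic when $m \geq 3$), every reduced expression for $(w_1 w_2)^n$ has length $2|n|$, so $\ell(w')$ is even. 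Substituting back gives $gP_j = u w' P_j$ with $u \in \mathcal{U}_i$ and $\ell(w') \in 2\mathbb{Z}_{\geq 0}$, which is the desired conclusion.

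I would make one brief notational comment to clarify the abuse of language: the element $t \in \mathbb{A}$ is strictly an element of $N \subset G$ (a lift of a Weyl group element), while $w' \in W$ in the statement refers to its image under the projection $N \to N/H = W$; identifying $w'$ with any lift in $N$ (for instance $t$ itself) makes the equality $gP_j = u w' P_j$ meaningful as an equality of cosets in $G/P_j$, and is consistent with the convention used throughout Section~5 (see the discussion following Lemma \ref{uniquenessoft}).

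The main obstacle is essentially absent: all of the substantive content, namely that every coset $gP_j$ can be represented via an element of $\mathcal{U}_i$ times an element of $\mathbb{A}$, was already established in the proof of Proposition \ref{iwacor} through the Bruhat decomposition $g \in BwB = U_w w B$, the inclusion $U_w \subseteq \mathcal{U}_{i_k}$ from Lemma \ref{conlem2}, and the case analysis on the parity of $\ell(w)$ (multiplying on the right by $w_j \in P_j$ when $\ell(w)$ is odd to convert to an even-length element of $\mathbb{A}$ without changing the coset). The corollary merely repackages this data by naming the $\mathbb{A}$-component as an even-length $w' \in W$.
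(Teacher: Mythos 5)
Your proposal is correct and matches the paper's treatment: the paper offers no separate argument for this corollary, presenting it as an immediate consequence of Proposition~\ref{iwacor}, whose proof already contains the Bruhat-to-Iwasawa conversion and the parity adjustment by $w_j\in P_j$ that you describe. Reading off that $\mathbb{A}=\{(w_1w_2)^n\}$ consists of even-length elements of the infinite dihedral Weyl group is exactly the intended final step.
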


We use the following description of elements of $\mathcal T$:
$$\mathcal T=\{(w_{i}w_{3-i})^{n}\mid n\in \mathbb{Z}_{\ge 0},\; i=1,2\}.$$
We denote an Iwasawa cell with $\mathcal T$-component of length $n_i$ by $\mathcal{U}_{i}(w_{i}w_{3-i})^{n_i}P_{j}$.  For the sake of brevity, we will use $n$ to denote $n_i$. 

Suppose $v$ is a vertex on $X$, which corresponds to an Iwasawa cell $\mathcal{U}_{i}(w_{i}w_{3-i})^{n_i}P_{j}$ for $i,j=1,2$. Further assume that the element of the set $\Omega_{v}^{1}$ corresponds to the Iwasawa cell with $\mathcal T$-component length $\ell^{1}(i)$ and the elements of $\Omega_{v}^{q}$ correspond to the Iwasawa cell with $\mathcal T$-component length $\ell^{q}_{r}(i)$, where $r=1,2,\dots, q$. 

\begin{proposition}\label{realres}
In the above notations
\begin{itemize}
\item [(1)] We have \[\ell^{1}(i)=
    \begin{dcases}
        n-1&\text{if}\; j=3-i\\
       n &\text{if}\; j=i.
       \end{dcases}
\]
\item [(2)] For each $r=1,2,\dots, q$, \[\ell^{q}_{r}(i)=
    \begin{dcases}
        n&\text{if}\; j=3-i\\
       n+1 &\text{if}\; j=i.
       \end{dcases}
\]
\end{itemize}
\end{proposition}

\begin{proof}
For (1), if $j=3-i$ then
$$\mathcal{U}_{i}(w_{i}w_{3-i})^{n_i}P_{j}=\mathcal{U}_{i}(w_{i}w_{3-i})^{n_i-1}w_{i}w_{3-i}P_{3-i}=\mathcal{U}_{i}(w_{i}w_{3-i})^{n_i-1}w_{i}P_{3-i}.$$
 By using the argument similar to the one used in the proof of Proposition~\ref{iwacor}, we can assume that $v$ corresponds to the conjugate $gP_{3-i}$ with $g=u_{w}wb$ for some $b\in B$, $w=(w_{i}w_{3-i})^{n_i-1}w_{i}$ and $u_{w}\in U_{w}$. Therefore, the element of $\Omega_{v}^{1}$ corresponds to the coset
\begin{eqnarray}
u_{w}wbP_{i}&=&u_{w}wP_{i}\nonumber\\
&=&u_{w}(w_{i}w_{3-i})^{n_i-1}w_{i}P_{i}\nonumber\\
&=&u_{w}(w_{i}w_{3-i})^{n_i-1}P_{i}.
\end{eqnarray}
Thus the element of $\Omega_{v}^{1}$ corresponds to the Iwasawa cell with $\mathcal T$-component length $n-1$.
The proof for $j=i$ can be obtained along the same lines.

In the proof (2), we assume $j=i$. The other case follows similarly. Let $\sigma_{g\chi_{\alpha_{i}}(s)w_{i}P_{3-i}}\in \Omega_{v}^{q}$ for some $s\in \mathrm{k}$.
\begin{eqnarray}
g\chi_{\alpha_{i}}(s)w_{i}P_{3-i}&=&u_{w}wb\chi_{\alpha_{i}}(s)w_{i}P_{3-i}\nonumber\\
&=&u_{w}w\chi_{\alpha_{i}}(s)w_{i} 
\left( w_{i}\chi_{\alpha_{i}}(-s)b\chi_{\alpha_{i}}(s)w_{i} \right) P_{3-i}\nonumber\\
&=&u_{w}\chi_{w\alpha_{i}}(s)ww_{i} \left( w_{i}\chi_{\alpha_{i}}(-s)b\chi_{\alpha_{i}}(s)w_{i} \right) P_{3-i}\nonumber\\
&=&u_{w}'ww_{i} \left( w_{i}\chi_{\alpha_{i}}(-s)b\chi_{\alpha_{i}}(s)w_{i} \right) P_{3-i}\label{prop13eq1}
\end{eqnarray}
for $u_{w}'=u_{w}\chi_{w\alpha_{i}}(s)\in U$. Using \cite[Lemma 6.3]{CG}, we write
\begin{eqnarray}\label{prop13eq2}
w_{i}\chi_{\alpha_{i}}(-s)b\chi_{\alpha_{i}}(s)w_{i}=u_{-\alpha_{i}}u^{\alpha_{i}}h,
\end{eqnarray}
for some $u_{-\alpha_{i}}\in U_{-\alpha_{i}}$, $u^{\alpha_{i}} \in U^{ w_{i}}=w_{i}Uw_{i}\cap U$ and $h\in H$. Using (\ref{prop13eq2}) in (\ref{prop13eq1}), we get
\begin{eqnarray}
g\chi_{\alpha_{i}}(s)w_{i}P_{3-i}&=&u_{w}'ww_{i}u_{-\alpha_{i}}u^{\alpha_{i}}hP_{3-i}\nonumber\\
&=&u_{w}'ww_{i}u_{-\alpha_{i}}P_{3-i}\nonumber\\
&=&u_{w}'u_{w\alpha_{i}}ww_{i}P_{3-i}\nonumber\\
&=&u_{w}''ww_{i}P_{3-i},\label{prop13eq3}
\end{eqnarray}
for some $u_{w\alpha_{i}}\in U_{w\alpha_{i}}$ and $u_{w}''=u_{w}'u_{w\alpha_{i}}\in U$. We set $w'=ww_{i}$ and use Corollary~\ref{sec3cor1} to write
$$u_{w}''=u_{w'}u^{w'}$$
for some $u_{w'}\in U_{w'}$ and $u^{w'}\in U^{w'}$. We use $w'=ww_{i}$ and the above expression for $u_{w}''$ in the right hand side of (\ref{prop13eq3}) to get
\begin{eqnarray}
g\chi_{\alpha_{i}}(s)w_{i}P_{3-i}&=&u_{w'}u^{w'}w'P_{3-i}\nonumber\\
&=&u_{w'}w'P_{3-i}\nonumber\\
&=&u_{w'}ww_{i}P_{3-i}.\label{prop13eq4}
\end{eqnarray}
Finally we get the assertion by putting $w=(w_{i}w_{3-i})^{n_i}$ in the right hand side of (\ref{prop13eq4}), which gives
$$u_{w'}(w_{i}w_{3-i})^{n_i}w_{i}P_{3-i}=u_{w'}(w_{i}w_{3-i})^{n_i}w_{i}w_{3-i}P_{3-i}=u_{w'}(w_{i}w_{3-i})^{n_i+1}P_{3-i}.$$
This completes the proof.
\end{proof}

The following local picture (Figure \ref{fig4}) gives a graphical interpretation of the above proposition.
\begin{figure}[h!]
\begin{minipage}[t]{0.48\textwidth}
\includegraphics[width=\linewidth,keepaspectratio=true]{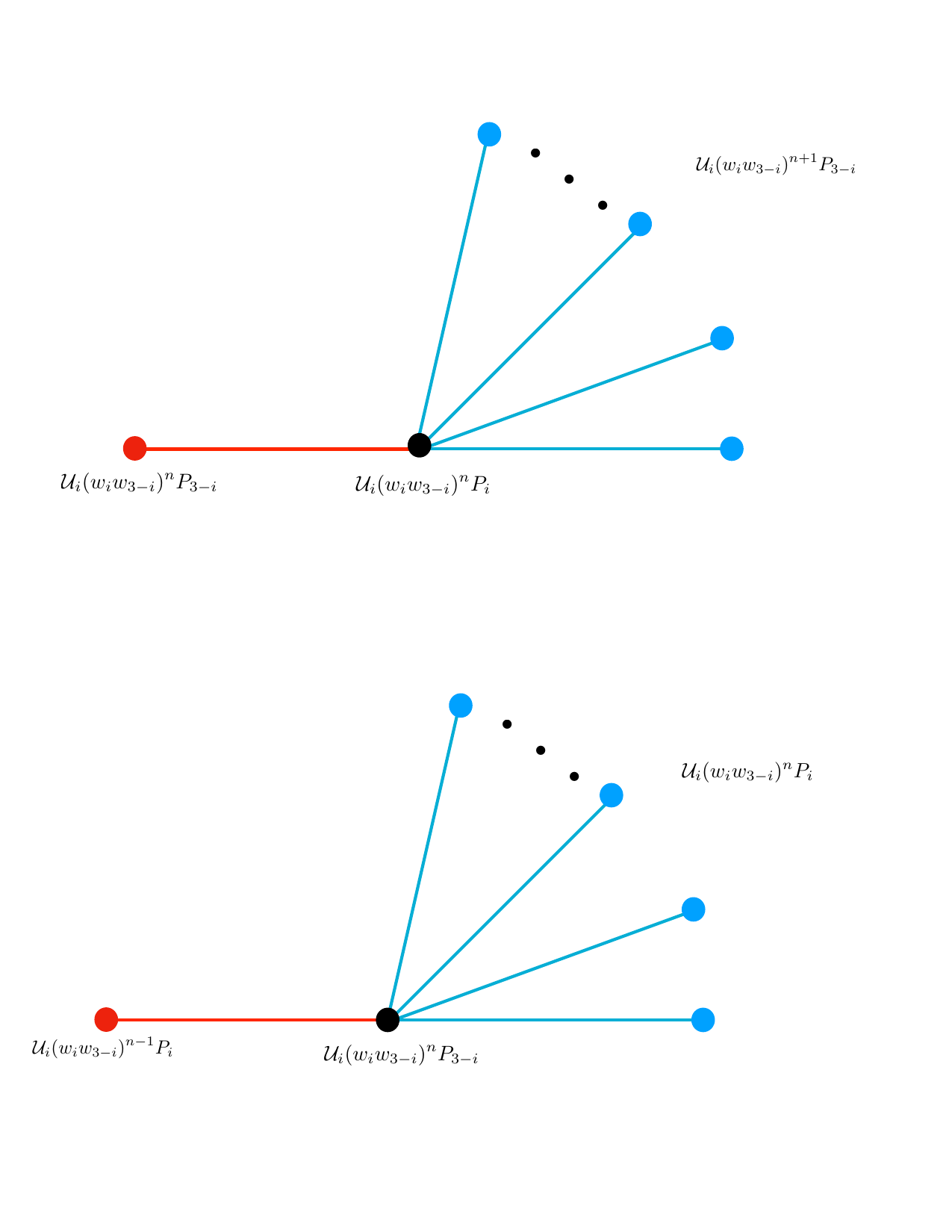}

\end{minipage}
\hspace*{\fill} 
\begin{minipage}[t]{0.48\textwidth}
\includegraphics[width=\linewidth,keepaspectratio=true]{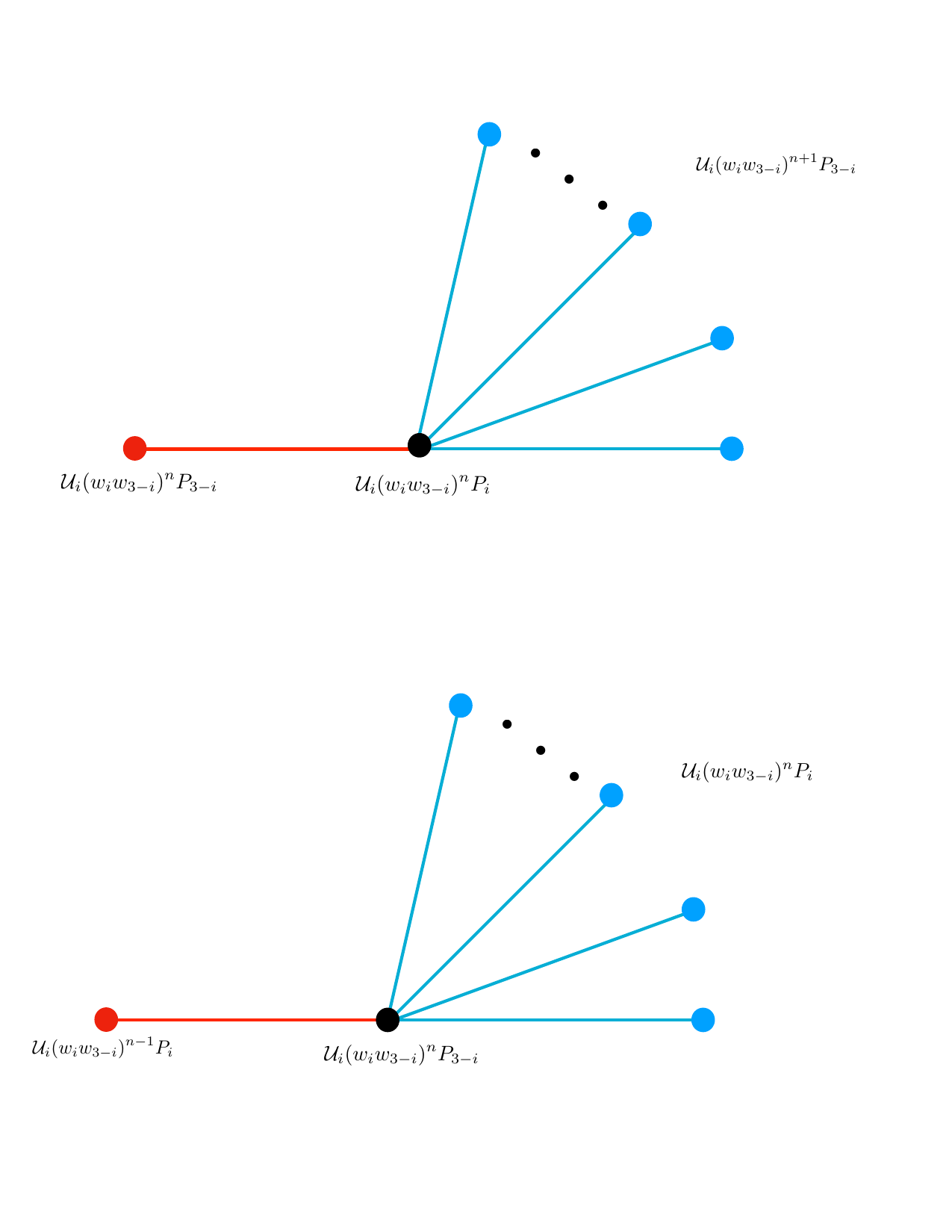}

\end{minipage}
 \caption{Local picture of adjacent vertices}
 \label{fig4}
\end{figure}

\section{Operators and functions on the Tits building}

In this section, we define a character on the vertices of the tree $X$ associated with $G$ that we will use to define Eisenstein series on quotients of $X$ by subgroups of the Kac--Moody group $G$.

\subsection{ Adjacency operator and eigenfunctions}
 For $x,y\in VX$, we let $d(x, y)$ denote  the number of edges in the shortest reduced path 
between the vertices $x$ and $y$. Let $\mathcal{F}(X)=\{f:VX\longrightarrow\mathbb{C}\}$. We define the adjacency operator $T$ on $\mathcal{F}(X)$ as follows
 \begin{eqnarray}
Tf(x)=\sum_{d(x,y)=1} f(y)=\sum_{e\in EX, o(e)=x} f(t(e))\label{adjop}
\end{eqnarray}
where $x\in VX$, and $o(e)$ and $t(e)$ denote the origin and terminus of $e$ respectively.

We recall that a non-uniform lattice is a discrete subgroup of finite covolume. 
The adjacency operator $T$ may also be defined on vertices of a quotient $\Gamma\bsl X$ for a non-uniform lattice $\Gamma\in\Aut(X)$. The operator $T$ is defined on a sequence of vertices $a_{n-1},a_n,a_{n+1}$  approaching the end of the 
ray  by
\[(T f)(a_n)= qf(a_{n-1}) + f(a_{n+1}).\]

We say that a function $f: VX\to \mathbb{C}$ is an \textit{eigenfunction} for the adjacency operator $T$ with eigenvalue $\lambda$ if 
$Tf(x)=\lambda f(x)$ for all but finitely many $x\in VX$.

Let $g\in G$, then by the Iwasawa decomposition $g\in \mathcal{U}_{i}(w_{i}w_{3-i})^{n_i(g)}P_{j}$ for some $i=1,2$ and $j=1,2$.
 We define the function $\Psi_{i, s}\colon G\longrightarrow \mathbb{C}^{\ast}$ as follows.
Let $q$ be the cardinality of finite field $\mathrm{k}$, $s\in \mathbb{C}$, then for $g\in \mathcal{U}_{i}(w_{i}w_{3-i})^{n_i(g)}P_{j}$ we set

 \begin{eqnarray}\label{defch}
 \Psi_{i,s}(g)=
    \begin{dcases}
        q^{-2n_i(g)s} &\text{if}\; j=i\\
       q^{-(2n_i(g)-1)s} &\text{if}\;j=3-i.
       \end{dcases}
\end{eqnarray}
The function $\Psi_{i,s}$ is left $\mathcal{U}_{i}$ invariant and is locally constant on each Iwasawa cell $\mathcal{U}_{i}(w_{i}w_{3-i})^{n_i}P_{j}$. By combining this with the decomposition (\ref{eq5.3}), one can deduce that $\Psi_{i,s}$ becomes a function on the set of vertices of tree $X$ for $i=1,2$.

\begin{theorem}
For $i=1,2$ and $s\in \mathbb{C}$, the function $\Psi_{i, s}$ is an eigenfunction for the adjacency operator $T$ with an eigenvalue $q^{1-s}+q^{s}$.
\end{theorem}

\begin{proof}
Let $v$ be a vertex on $X$, and it corresponds to a coset in the Iwasawa cell $\mathcal{U}_{i}(w_{i}w_{3-i})^{n_i}P_{j}$ for $i,j=1,2$. We use the  notation $n=n_i$ and discuss the following two cases:

{\bf Case 1: $j=3-i$}.

By definition $\Psi_{i,s}(v)=q^{-(2n-1)s}$. 
By Proposition~\ref{realres}, the set of a vertex $\Omega_{v}^{1}$ corresponds to the Iwasawa cell with $\mathcal T$-component of length $n-1$ and $q$ elements of $\Omega_{v}^{q}$ correspond to the Iwasawa cells with the fixed $\mathcal T$-component of length $n$. Therefore
\begin{eqnarray*}
T\Psi_{i,s}(v)&=&q\Psi_{i,s}(\mathcal{U}_{i}(w_{i}w_{3-i})^{n}P_{i})+\Psi_{i,s}(\mathcal{U}_{i}(w_{i}w_{3-i})^{n-1}P_{i})\\
&=&q \cdot q^{-2ns}+q^{-(2n-2)s}\\
&=&(q^{1-s}+q^{s})q^{-(2n-1)s}\\
&=&(q^{1-s}+q^{s})\Psi_{i,s}(v).\\
\end{eqnarray*}

{\bf Case 2: $j=i$}.

In this case $\Psi_{i,s}(v)=q^{-2ns}$. 
By Proposition~\ref{realres}, the set of a vertex $\Omega_{v}^{1}$ corresponds to the Iwasawa cell with $\mathcal T$-component of length $n$ and $q$ elements of $\Omega_{v}^{q}$ correspond to the Iwasawa cells with the fixed $\mathcal T$-component of length $n+1$. Therefore
\begin{eqnarray*}
T\Psi_{i,s}(v)&=&q\Psi_{s}(\mathcal{U}_{i}(w_{i}w_{3-i})^{n+1}P_{3-i})+\Psi_{i,s}(\mathcal{U}_{i}(w_{i}w_{3-i})^{n}P_{3-i})\\
&=&q\cdot q^{-(2n+1)s}+q^{-(2n-1)s}\\
&=&(q^{1-s}+q^{s})q^{-2ns} \\
& = & (q^{1-s} + q^s) \Psi_{i, s}(v).
\end{eqnarray*}

This completes the proof. 
\end{proof}

\section {Eisenstein series}
We fix  $\Gamma=P_1^-$. For $i=1,2$, the groups ${\mathcal U}_i $ and $\Gamma$ contain the subgroup generated by the root subgroups corresponding to the real roots $\{ -\alpha _{3-i},\ -w_{3-i}\alpha _{i},\ -w_{3-i}w_{i} \alpha
_{3-i},\dots \}$. Hence $\Gamma\cap{\mathcal B}_i\neq \varnothing$ for $i=1,2$.
Define the Eisenstein series $E_{i,s}$ on $\Gamma\bsl X= \Gamma\bsl G/P_1 \sqcup \Gamma\bsl G/P_2$ corresponding to $\mathcal{B}_i$,
\begin{equation}\label{eis}
E_{i,s}(gP_j)= \sum_{\gamma\in \Gamma\cap{\mathcal{B}_i \bsl \Gamma}}\quad 
\Psi_{i,s}(\gamma g P_j),\quad j=1,2.
\end{equation}
From now on we will write the Eisenstein series by
$E_{i,s}(g)=E_{i,s}(gP_j)$ by fixing the coset representative $g$ of $gP_j$.


The analysis of $E_{1,s}$ is similar to that of $E_{2,s}$. So, for the rest of this paper, we just consider $E_{1,s}$.
Moreover, for  notational simplicity, we set $$\mathcal{U}=\mathcal{U}_1,\quad \mathcal{B}=\mathcal{B}_1$$ and write 
\[
 \Psi_{s}=\Psi_{1,s} \quad \textrm{and}\quad  E_{s}=E_{1,s}
\]
 by dropping $1$ in the notation. 


\subsection{Iwasawa decomposition of Haar measure on $G$}
Let $\pi:{\mathcal U}\times \mathcal T\times K\longrightarrow G$ be the product map with $K=P_{j}$ for $j=1,2$. Then $\pi$ is surjective by the Iwasawa decomposition. 
Let $du, da, dk$ be Haar measures  
on ${\mathcal U},\mathcal T,K$ respectively.

 We normalize $du$ on $\mathcal{U}$
such that $du(\Gamma\cap \mathcal{U} \bsl \mathcal{U})=1$ since $\Gamma\cap \mathcal{U} \bsl \mathcal{U}$ is compact. We take $da$ to be the counting
measure on the discrete group $\mathcal T$. If $Y\subset \mathcal T$ then $da(Y)$ is the cardinality of $Y$. We normalize $dk$ to equal 1 on $K$ and its conjugates. 

 We let $\pi_*$ denote the induced map $(du, da, dk)\mapsto d\mu$. 

The following theorem indicates that we can decompose the Haar measure on $G$ as in the 
Iwasawa decomposition of $G$.

\begin{theorem}\label{thm:haar} (Iwasawa decomposition of Haar measure)

  (i)   ${\mathcal U}$ is normalized by $\mathcal T$ 
and $\mathrm{Ad}(a)du=a^{2\rho} du$,
where $a^{2\rho}:= q^{- 2n(a)}$ if $a= (w_1 w_2)^{n(a)}$.

  (ii) Let $d\nu=a^{-2\rho}du~da~dk$. Then $\pi_*\nu=\mu$, the Haar measure on $G$, and 
$\pi_*\nu$ is bi-invariant.

 (iii) $a^{-2\rho}du~da$ is a left $\mathcal T$--invariant 
Haar measure on ${\mathcal U}\mathcal T$.

 \end{theorem}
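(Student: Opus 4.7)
For part (i), the normalization of $\mathcal{U}_i$ by $\mathbb{A}$ is already noted in the discussion of $\mathcal{B}$ preceding the theorem, so the substance is computing the modular character. The plan is to enumerate $\Delta^{re}_i$ as a bi-infinite sequence $\{\beta_k\}_{k\in\mathbb{Z}}$ using the listing given above (for $i=1$: $\ldots, \beta_{-1}=-\alpha_2, \beta_0=\alpha_1, \beta_1=w_1\alpha_2, \beta_2=w_1w_2\alpha_1, \ldots$) and to check by direct Weyl group computation (e.g.\ $w_1w_2(-\alpha_2)=w_1\alpha_2$) that $w_iw_{3-i}$ acts as $\beta_k\mapsto\beta_{k+2}$; hence $a=(w_iw_{3-i})^{n_i(a)}$ shifts indices by $2n_i(a)$. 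By relation (R1), this lifts to the automorphism $aU_{\beta_k}a^{-1}=U_{\beta_{k+2n_i(a)}}$ of root subgroups.

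Letting $\mathcal{U}_i^{(N)}:=\overline{\langle U_{\beta_k}:k\ge N\rangle}$ gives a descending filtration by compact open subgroups of $\mathcal{U}_i$ in the weight topology, with $\bigcap_N \mathcal{U}_i^{(N)}=\{1\}$. Conjugation by $a$ sends $\mathcal{U}_i^{(N)}$ to $\mathcal{U}_i^{(N+2n_i(a))}$; since $\mathcal{U}_i$ is commutative and each $U_{\beta_k}\cong(\mathrm{k},+)$ has order $q$, for $n_i(a)>0$ the quotient $\mathcal{U}_i^{(N)}/\mathcal{U}_i^{(N+2n_i(a))}$ has order $q^{2n_i(a)}$. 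Normalizing $du$ so that a reference compact open subgroup has unit volume, this index computation yields $\mathrm{Ad}(a)du=q^{-2n_i(a)}du=a^{2\rho_i}du$ (the case $n_i(a)<0$ is symmetric), which is (i).

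For part (ii), the plan is to verify left $G$-invariance of $\pi_*\nu$ and invoke uniqueness of Haar measure to identify it with $\mu$. It suffices to check invariance under each of the generating subgroups $\mathcal{U}_i$, $\mathbb{A}$, $K$. Invariance under $\mathcal{U}_i$ is immediate from left-invariance of $du$. For $a_0\in\mathbb{A}$, left translation acts on Iwasawa coordinates by $(u,a,k)\mapsto(a_0ua_0^{-1},a_0a,k)$; the Jacobian of the first coordinate is $a_0^{2\rho_i}$ by (i), while the density $a^{-2\rho_i}$ transforms to $(a_0a)^{-2\rho_i}=a_0^{-2\rho_i}a^{-2\rho_i}$, and these two factors cancel exactly.

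The main obstacle is left $K$-invariance. For $k_0\in K$, one writes $k_0ua=u'a'k'$ via Iwasawa, giving $k_0\cdot(uak)=u'\,a'\,(k'k)$; one must show the induced map $(u,a)\mapsto(u',a')$ (with $k_0$ fixed) preserves $a^{-2\rho_i}du\,da$. This is handled by combining the uniqueness of the $\mathbb{A}$-component from Lemma~\ref{uniquenessoft} with compactness of $K$: the action of $k_0$ descends to a measure-preserving bijection on the quotient $\mathcal{U}_i\mathbb{A}\bsl G$, since $K$-orbits there correspond to elements of the compact quotient $K/(K\cap\mathcal{U}_i\mathbb{A})$, on which the Haar measure is canonically given by $dk$. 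Bi-invariance of $\pi_*\nu$ then follows from unimodularity of $G$: the existence of compact open subgroups such as $B$ forces the modular function to be trivial, so every left Haar measure is also a right Haar measure.
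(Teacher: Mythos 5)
Your part (i) is essentially the paper's argument: both reduce to the fact that $w_iw_{3-i}$ translates $\Delta^{re}_i$ and then compute the index of $\mathrm{Ad}(a)\mathcal{U}_i^{+}$ in $\mathcal{U}_i^{+}$ to be $q^{2n_i(a)}$; the paper exhibits the quotient explicitly as a product of $2n_i$ root subgroups, and your filtration $\mathcal{U}_i^{(N)}$ is the same computation in slightly different packaging. Your verification of left $\mathcal{U}_i$- and $\mathbb{A}$-invariance of $a^{-2\rho_i}du\,da\,dk$ in part (ii) is also correct.

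The gap is precisely at the step you yourself call the main obstacle: left $K$-invariance. Left translation by $k_0$ does \emph{not} descend to the left quotient $\mathcal{U}_i\mathbb{A}\bsl G$, so the claim that ``the action of $k_0$ descends to a measure-preserving bijection on $\mathcal{U}_i\mathbb{A}\bsl G$'' is not meaningful as stated; and the identification of $\mathcal{U}_i\mathbb{A}\bsl G$ with a quotient of $K$ carrying the measure $dk$ concerns the \emph{right} $K$-action on that left quotient, which is the invariance already built into $d\nu$ through the factor $dk$ and says nothing about whether the map $(u,a)\mapsto(u',a')$ defined by $k_0ua=u'a'k'$ preserves $a^{-2\rho_i}du\,da$. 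So the crucial invariance is asserted rather than proved. The paper sidesteps this difficulty: since $K$ is compact and open, both $\pi_*\nu$ and $\mu$ restrict on each coset $gK$ to a multiple of translated Haar measure on $K$ (by right $K$-invariance), and the multiples for $\pi_*\nu$ are the same for every coset because $\pi_*\nu$ is left $\mathcal{U}_i\mathbb{A}$-invariant (your Jacobian computation) and $\mathcal{U}_i\mathbb{A}$ acts transitively on $G/K$; this identifies $\pi_*\nu$ with $\mu$ coset by coset without ever establishing left $K$-invariance directly. Your proof can be repaired by replacing the faulty step with this coset argument. A further, more minor slip: the mere existence of a compact open subgroup does not force unimodularity (the $p$-adic $ax+b$ group is a counterexample); what makes $G$ unimodular here is that it is generated by the compact subgroups $P_1$ and $P_2$, on each of which the modular character must be trivial.
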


\begin{proof} Recall that $w_1w_{2}$ acts on $\Delta^{\re}_1$ by translation. It is enough to consider the subgroup of $\mathcal{U}$ generated by positive root groups. Let $\mathcal{U}^+\subset \mathcal{U}$ be the completion of the
group generated by $U_\alpha$ with $\alpha\in\Delta^{\re}_1$, $\alpha>0$, then $\mathcal{U}^+$ is a subgroup of $U$ and $$\textrm{Ad}((w_1w_{2})^n)\mathcal{U}^+\cap \mathcal{U}^+\le U^{w}\cap \mathcal{U}^+$$ where $U^{w}=U\cap w^{-1}Uw$. Then,  by Corollary~\ref{sec3cor1} 
$$\frac{\mathcal{U}^+}{ \textrm{Ad}((w_1w_{2})^n)\mathcal{U}^+\cap \mathcal{U}^+ }\cong U_{(w_1w_{2})^n}$$

which has cardinality $q^{\ell((w_1w_{2}))^n}=q^{2n}$.

 For (ii) we  note that  $K$ acts on $G$ by right translation.  Also
 $\pi_*\nu$ and $\mu$ agree and coincide with the Haar measure on 
$K\subset G$ which is an open compact
subgroup acting transitively on $G$. Hence $\pi_*\nu$ must be the Haar measure on all of $G$. 
We note also that $\pi_*\nu$ is right $K$-invariant and  left   ${\mathcal U}\mathcal T$-invariant.

For (iii) we note that ${\mathcal U}\mathcal T$ acts on $G$ by left translation. Since  $\pi_*\nu$ is left   ${\mathcal U}\mathcal T$-invariant, 
 it follows that $a^{-2\rho}du~da$ is  left $\mathcal T$--invariant 
on ${\mathcal U}\mathcal T$ (see also \cite[1.1.1]{H}), which proves (iii).
\end{proof}

\subsection {Convergence of Eisenstein series}

 The following theorem shows that the series $E_{s}(g)$ converges  uniformly and absolutely on compact
subsets.   We assume that $s\in\mathbb{C}$ is `sufficiently regular', that is, $Re(s)>1$.

\begin{theorem} \label{thm:conv}
The Eisenstein series  \[
E_{s}(g)= \sum_{\gamma\in \Gamma\cap \mathcal{B} \bsl\Gamma}\Psi_{s}(\gamma g)
\]
converges absolutely, provided $
{Re}(s) > 1$.
\end{theorem}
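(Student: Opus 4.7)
Plan: The strategy is the standard one for Eisenstein series: bin the sum by Iwasawa level, bound the decay of the summand at each level, count the number of lattice cosets at each level, and dominate everything by a geometric series.

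\textbf{Step 1 (Decay rate of the summand).} Write the Iwasawa decomposition $\gamma g = u_{\gamma g}(w_iw_{3-i})^{n_i(\gamma g)}k_{\gamma g}$ as in Corollary~\ref{Iwasawa} and Lemma~\ref{uniquenessoft}. Since $\Psi_{i,s}$ is left $\mathcal U_i$-invariant by construction, the summand $\Psi_{i,s}(\gamma g)$ descends to a well-defined function of the coset $(\Gamma\cap\mathcal U_i)\gamma$, and from the definition (\ref{defch}) its modulus satisfies
\[
|\Psi_{i,s}(\gamma g)| \;\leq\; q^{|\mathrm{Re}(s)|}\, q^{-2\,\mathrm{Re}(s)\, n_i(\gamma g)}.
\]

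\textbf{Step 2 (Counting cosets at each level).} For each integer $n$ set $N_n(g) = \#\{\gamma \in (\Gamma\cap\mathcal U_i)\backslash\Gamma : n_i(\gamma g) = n\}$. Proposition~\ref{bounded}, applied to $\Gamma$ with a bounded correction coming from right-multiplication by the fixed element $g$, gives a uniform lower bound $n_i(\gamma g) \geq n_0 = n_0(g)$, so only $n \geq n_0$ contribute. The key claim is $N_n(g) \leq C(g)\, q^{2n}$. To see this, note that distinct such cosets inject into the image of the Iwasawa slice $\mathcal U_i (w_iw_{3-i})^n K$ in $(\Gamma\cap\mathcal U_i)\backslash G$; the Iwasawa decomposition of Haar measure $d\nu = a^{-2\rho_i}\,du\,da\,dk$ with $a^{-2\rho_i} = q^{2n}$ at level $n$, together with the finiteness of $\mathrm{vol}((\Gamma\cap\mathcal U_i)\backslash\mathcal U_i)$ and of $\mathrm{vol}(K)$, makes this image have volume bounded by a constant multiple of $q^{2n}$. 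Discreteness of $\Gamma$ in $G$ then bounds the number of lattice cosets in a set of finite volume by that volume up to a universal constant, giving the required count.

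\textbf{Step 3 (Geometric series).} Combining Steps 1 and 2,
\[
\sum_{\gamma \in (\Gamma\cap\mathcal U_i)\backslash\Gamma} |\Psi_{i,s}(\gamma g)| \;\leq\; C(g)\, q^{|\mathrm{Re}(s)|} \sum_{n \geq n_0} q^{2n(1-\mathrm{Re}(s))},
\]
which is a convergent geometric series precisely when $\mathrm{Re}(s) > 1$. Continuity of the constants $C(g)$ and $n_0(g)$ in $g$ yields absolute convergence uniformly on compact subsets of $G$, and $\Gamma$-invariance of $E_{i,s}$ is immediate by reindexing.

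\textbf{Main obstacle.} The delicate step is Step 2, where the continuous volume estimate furnished by the Iwasawa decomposition of Haar measure must be translated into a count of discrete lattice cosets. This is precisely where the hypothesis that $\Gamma = P_1^-$ is a nonuniform lattice with $\Gamma\cap\mathcal U_i$ of finite covolume in $\mathcal U_i$ is indispensable; for a generic discrete subgroup of $G$ no such growth bound on $N_n(g)$ would be available. Once the estimate $N_n(g) = O(q^{2n})$ is established, the remainder of the argument reduces to an elementary geometric-series estimate with sharp convergence threshold at $\mathrm{Re}(s) = 1$, matching the modular weight $2\rho_i$.
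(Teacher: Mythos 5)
Your proof is correct and follows essentially the same route as the paper's: both reduce the sum to a volume computation via the Iwasawa decomposition of Haar measure (the modular factor $a^{-2\rho_i}=q^{2n}$ supplying the $q^{2n}$ growth per level), use discreteness of $\Gamma$ to pack thickened cosets disjointly into the Iwasawa slices, and invoke the lower bound $n_i(\gamma g)\ge n_0$ from Proposition~\ref{bounded} before summing the geometric series $\sum_{n\ge n_0}q^{2n(1-\mathrm{Re}(s))}$. The only difference is organizational: the paper converts the entire sum into a single integral over $(\Gamma\cap\mathcal{U}_i)\backslash\Gamma g\Omega$ and evaluates it with the measure $a^{-2\rho_i}\,du\,da\,dk$, whereas you bin by Iwasawa level first and bound the number of cosets in each slice by its volume.
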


\begin{proof} Without loss of generality we may assume that $s$ is real. Using the Iwasawa decomposition $ G=\mathcal{U}\mathcal T K$, we may choose a neighborhood $\Omega $ of the identity $e$ in $ G$ such that $\Omega 
\subset K$ and for $\gamma=u_{\gamma}t_{\gamma}k_{\gamma} \in\Gamma$ and $g=u_gt_gk_g \in G$ we have $\gamma g\Omega \neq \gamma'g\Omega$
whenever $\gamma \neq \gamma'$, $\gamma, \gamma'\in\Gamma$.

Since $a_{\gamma g\omega }=a_{\gamma g }$ for $\omega \in \Omega 
$ (even for $\omega \in K)$, we may average $\Psi_{s}(\gamma g)$ over the coset $\gamma g\Omega  $:

$$\Psi_{s}(\gamma g)=\int_{\gamma g\Omega  } \Psi_{s}(x) dx,$$
where we normalize the measure of $\Omega$ to be 1.
Hence $$E_{s}(g)= \sum_{\gamma\in \Gamma\cap\mathcal{B}\bsl\Gamma} 
\Psi_{s}( \gamma g) = \sum_{\gamma\in \Gamma\cap \mathcal{B}\bsl\Gamma} 
\int_{\gamma g\Omega  } \Psi_{s}(x) dx = \int_{\Gamma\cap \mathcal{B}\bsl\Gamma g\Omega  } \Psi_{s}(x) dx.$$
Applying a variant of Proposition \ref{bounded}, we may conclude that
  $\Gamma g \Omega   \subset   {\mathcal U}\mathcal T(n_0)K$ for some $n_0\in\mathbb{Z}$, 
where $\mathcal T(n_0)=\{(w_1 w_{2})^n\in \mathcal T \mid n\geq n_0\}$. When $s>0$ from (\ref{defch}) it is easy to deduce the estimate
\begin{eqnarray}\label{chb}
 \Psi_{s}(a) \leq q^{s-2n(a)s},\quad a\in \mathcal T.   
\end{eqnarray}
Then we have
\begin{eqnarray}
E_{s}(g)&\leq & \int_{\Gamma \cap \mathcal{B}\bsl \mathcal{U} \mathcal T(n_{0})K }\Psi
_{s}(x)dx\nonumber\\
&=& C\int_{\mathcal T(n_{0})}\Psi_{s}(a) a^{-2\rho }da\nonumber
\\
  &\leq& Cq^s\int_{\mathcal T(n_{0})} q^{-2 n(a)(s-1)}da\nonumber\\
&=&Cq^s\sum_{n\geq n_0} q^{-2 n (s-1)}\label{EB1}
 \end{eqnarray}
where $C=\Vol(\Gamma\cap\mathcal{B} \bsl \mathcal{B})\Vol(\mathcal{B_I} \bsl K)$, noting that
$\Gamma\cap\mathcal{B} \bsl \mathcal{B}$ and $K$ are compact. 
Hence $E_{s}(g)$ converges absolutely provided
${Re}(s)>1$.
\end{proof}
For $\ell\in\mathbb{R}_{\geq 0}$, let 

\begin{eqnarray}
L^2(\Gamma\bsl X,\ell) := \{f:\Gamma\bsl X\longrightarrow \mathbb{C} 
\mid \int_{\Gamma\bsl X} |f(g)|^2(\Psi_{s}(g))^{-2\ell} dg<\infty\}
\end{eqnarray}

denote the space of $\ell$-moderate growth functions on $\Gamma\bsl X$.\\

\begin{proposition}
We have $E_{s}\in L^2(\Gamma\bsl X, \ell)$. That is
$$\int_{\Gamma\bsl X} |E_{s}(g)|^2(\Psi_{s}(g))^{-2\ell} dg<\infty.$$

\begin{proof}   Without loss of generality, we may assume that $s$ is real. Using the definition of $E_{s}(g)$ and the triangle inequality, we write
\begin{eqnarray}
\int_{\Gamma\bsl X} |E_{s}(g)|^2(\Psi_{s}(g))^{-2\ell} dg&=&\int_{\Gamma\bsl X}\left( |\sum_{\gamma\in \Gamma\cap\mathcal{B}\bsl\Gamma} 
\Psi_{s}( \gamma g) |^2\right)(\Psi_{s}(g))^{-2\ell} dg\nonumber\\
&\le&\int_{\Gamma\bsl X} \left(\sum_{\gamma\in \Gamma\cap\mathcal{B}\bsl\Gamma} 
|\Psi_{s}( \gamma g) |^2\right)(\Psi_{s}(g))^{-2\ell} dg.
\end{eqnarray} 
For $g \in \Gamma \backslash X$ and $\gamma \in \Gamma \cap \mathcal{B} \backslash \Gamma$, both $g$ and $\gamma g$ correspond to the same Iwasawa cells. By \cite[Theorem 3.10]{AC}, the set of vertices $\{P_{1}, P_{2}, w_{2}P_{1}, w_{2}w_{1}P_{2}, \dots\}$ forms the fundamental domain for the action of $\Gamma$ on $X$. By Proposition~\ref{iwacor}, these vertices correspond to the Iwasawa cells ${\mathcal U}\mathcal T'K$ for $\mathcal T' = \{(w_{1}w_{2})^n \mid n \le 0\}$. Combining these two facts with the inequality (\ref{chb}) and an argument similar to the one used in the proof of Theorem~\ref{thm:conv}, we can deduce that for some constant $C$.

\begin{eqnarray*}
\int_{\Gamma\bsl X} |E_{s}(g)|^2(\Psi_{s}(g))^{-2\ell} dg
&\le&C\int_{\mathcal T'} 
|q^{s-2n(a)s} |^2(q^{s-2n(a)s})^{-2\ell} a^{-2\rho }da.\\
&=&Cq^s\int_{\mathcal T'} 
|q^{s-2n(a)s} |^2(q^{s-2n(a)s})^{-2\ell} da.\\
&=&Cq^s\sum_{n\le 0} q^{2s(2-2n)-2s\ell(1-2n)}\\
&=&Cq^s\sum_{n\le 0} q^{2s(2-2n)-2s\ell(2-2n-1)}\\
&=&Cq^s\sum_{n\le 0} q^{2s(2-2n)-2s\ell(2-2n)}q^{2s\ell}\\
&=&Cq^{s(2\ell+1)}\sum_{n\le 0} q^{2s(1-\ell)(2-2n)}.
\end{eqnarray*} 
So, for $\ell$ sufficiently large, we have $\int_{\Gamma\bsl X} |E_{s}(g)|^2(\Psi_{s}(g))^{-2\ell} dg<\infty$ and hence $E_{s}\in L^2(\Gamma\bsl X, \ell)$.
\end{proof}

\end{proposition}

It follows from the definitions and Theorem~\ref{thm:conv} that for $\gamma\in\Gamma$, we have
$E_{s}(g)=E_{s}(\gamma g)$.
\subsection {The constant term}\label{conster}
Suppose now that $f\in L^2(\Gamma\bsl X, \ell)$, then $f$ is a $\Gamma$-left and $K$-right invariant function on $G$, where $K=P_{j}$, $j=1,2$ as before. We define the {\it constant term} of $f$ along $\mathcal{U}$, to be
\begin{eqnarray*}
C_{\mathcal{U}}f(g) :=\int_{\Gamma\cap \mathcal{U}\bsl \mathcal{U}}f(ug)du,
\end{eqnarray*}
where $du$ is the induced Haar measure. In this subsection we compute the constant term $C_{\mathcal{U}}^{0}(E_{s})$. 
 

 Since $\mathcal{U}\subset \mathcal{B}$ and $\mathcal{B}$ is the stabilizer of the end of the positive half $\mathcal{A}_0^+$ of the standard apartment, we may view the constant terms as being computed `along $\mathcal{A}_0^+$'. 

\begin{proposition}\label{ESCT}  The constant term of $E_{s}$ is given as follows,
$$C_{\mathcal{U}}E_{s}(g) = \Psi_{s}(g) +c(s)\Psi_{1-s}(g),$$
where  $c(s)$ is holomorphic for $Re(s)>1$ and is independent of $g$. 
\end{proposition}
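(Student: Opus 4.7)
The plan is to mimic the classical unfolding strategy, using the spherical Bruhat decomposition $G=\mathcal{B}\sqcup\mathcal{B}w_2\mathcal{B}$ from Corollary~5.5 to split the constant term into two pieces. By absolute convergence for $\mathrm{Re}(s)>1$, interchange sum and integral to write
\begin{equation*}
C^0_{\mathcal{U}}(E_s)(g)\;=\;\int_{(\Gamma\cap\mathcal{U})\bsl\mathcal{U}}\sum_{\gamma\in(\Gamma\cap\mathcal{U})\bsl\Gamma}\Psi_s(\gamma ug)\,du.
\end{equation*}
Then decompose $\Gamma=(\Gamma\cap\mathcal{B})\sqcup(\Gamma\cap\mathcal{B}w_2\mathcal{B})$ and split the inner sum accordingly. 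The choice $w=w_2$ (rather than $w_1$) is dictated by Lemma~\ref{lem7.4}, which records precisely how multiplication by $w_2$ shifts Iwasawa cells; geometrically, $w_2$ interchanges $\Delta^{re}_1$ and $\Delta^{re}_2$ and so plays the role of the longest element reflecting $\mathcal{U}=\mathcal{U}_1$ to its opposite.

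For the small-cell contribution, every $\gamma\in\Gamma\cap\mathcal{B}$ factors as $\gamma=u_\gamma a_\gamma b_{I,\gamma}$ with $u_\gamma\in\mathcal{U}$, $a_\gamma\in\mathbb{A}$, $b_{I,\gamma}\in\mathcal{B}_{\mathcal{I}}$ (using $\mathcal{B}=\mathcal{U}_1\mathbb{A}\mathcal{B}_{\mathcal{I}}$). Since $\Psi_s$ is left $\mathcal{U}$-invariant and right $\mathcal{B}_{\mathcal{I}}K$-invariant and depends only on the $\mathbb{A}$-component of its argument, the $u$-integrand becomes $u$-independent. The contribution then collapses to a finite-type sum of $\mathbb{A}$-translates of $\Psi_s(g)$ indexed by coset representatives in $(\Gamma\cap\mathcal{U})\bsl(\Gamma\cap\mathcal{B})$, producing the first term $c_1(s)\Psi_s(g)$, the scalar $c_1(s)$ encoding those representatives and their $\mathbb{A}$-components (bounded below by Proposition~\ref{bounded}).

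For the big-cell contribution, the unfolding trick merges the coset sum over $(\Gamma\cap\mathcal{U})\bsl(\Gamma\cap\mathcal{B}w_2\mathcal{B})$ with the integration domain $(\Gamma\cap\mathcal{U})\bsl\mathcal{U}$ into the full intertwining integral
\begin{equation*}
M_s(g)\;=\;\int_{\mathcal{U}}\Psi_s(w_2 ug)\,du.
\end{equation*}
Stratify $\mathcal{U}$ by the $\mathbb{A}$-length $n$ of $w_2 ug$: Lemma~\ref{lem7.4} identifies the coset $\mathcal{U}w_2(w_1w_2)^{n-1}P_j$ with $\mathcal{U}(w_1w_2)^{-n}P_j$ for $j=1$ and with $\mathcal{U}(w_1w_2)^{-(n-1)}P_j$ for $j=2$. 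Using the Iwasawa Haar-measure decomposition $d\nu=a^{-2\rho_1}du\,da\,dk$ with $a^{2\rho_1}=q^{-2n_1(a)}$, together with the cardinality formula $|\mathcal{U}_1^+/\mathrm{Ad}((w_1w_2)^n)\mathcal{U}_1^+|=q^{2n}$, one computes the measure of each stratum to scale as $q^{2n}$. Plugging into $\Psi_s$, whose value on the $n$-th stratum is $q^{-2sn}$ or $q^{-s(2n-1)}$ by the character definition, reduces $M_s(g)$ to a geometric series in $q^{1-2s}$, convergent for $\mathrm{Re}(s)>1$, which sums to a rational function of $q^s$ times $\Psi_{1-s}(g)$; this gives the second term $c_2(s)\Psi_{1-s}(g)$.

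\textbf{Main obstacle.} The principal technical work is in evaluating $M_s(g)$: stratifying $\mathcal{U}$ precisely by the $\mathbb{A}$-component of $w_2ug$ as $g$ ranges over distinct Iwasawa cells, and verifying that the Haar-measure weight of each stratum combined with the value of $\Psi_s$ there conspires to produce exactly the shifted character $\Psi_{1-s}$. This is the combinatorial incarnation of the Eisenstein series functional equation $s\mapsto 1-s$; it requires careful bookkeeping of the modular factor $a^{-2\rho_1}$ against the length shifts recorded in Lemma~\ref{lem7.4} and the $\mathrm{Ad}$-filtration of $\mathcal{U}_1^+$ by real root subgroups.
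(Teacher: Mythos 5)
Your proposal follows essentially the same route as the paper's proof: interchange the sum and the integral, split according to the rank--1 spherical Bruhat decomposition $G=\mathcal{B}\sqcup\mathcal{B}w\mathcal{B}$, observe that the small-cell contribution is a multiple of $\Psi_{s}(g)$, and use Lemma~\ref{lem7.4} to see that the big-cell terms transform as $\Psi_{1-s}(g)$, yielding $c_1(s)\Psi_s+c_2(s)\Psi_{1-s}$. The only substantive difference is that you additionally propose to evaluate the big-cell piece as a full intertwining integral $\int_{\mathcal{U}}\Psi_s(w_2ug)\,du$ via a geometric series (your stated main obstacle); this is more than the proposition requires, since the paper never computes $c_1(s),c_2(s)$ explicitly and the proportionality to $\Psi_{1-s}$ already follows from Lemma~\ref{lem7.4}, and if you did pursue the exact constant you would still need to justify that the unfolding of the coset sum over $(\Gamma\cap\mathcal{U})\backslash(\Gamma\cap\mathcal{B}w_2\mathcal{B})$ against the integral over $(\Gamma\cap\mathcal{U})\backslash\mathcal{U}$ produces precisely the full domain $\mathcal{U}$ with no residual arithmetic sum.
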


\begin{proof}
The absolute convergence of the integral defining $C_{\mathcal{U}}E_{s}$ for $Re(s)>1$ follows easily
from Theorem \ref{thm:conv} and the fact that $\Gamma\cap \mathcal{U}\bsl \mathcal{U}$ is compact. This also implies that $C_{\mathcal{U}}E_{s}$ is holomorphic in $Re(s)>1$. 

Since $C_{\mathcal{U}}E_{s}(g)$ is $(\mathcal{U}, K)$-bi-invariant, without loss of generality we may assume that $g =a \in \mathcal T$. By the Bruhat decomposition $G=\mathcal{B}\sqcup \mathcal{B}w_{1}\mathcal{B}$, we have

\begin{eqnarray}\label{CTD1}
C_{\mathcal{U}}E_{s}(a)&=&\int_{\Gamma\cap\mathcal{U}\bsl \mathcal{U}} \Psi_{s}(u a) du+\int_{\Gamma\cap\mathcal{U}\bsl \mathcal{U}} \sum_{\gamma\in (\Gamma\cap \mathcal{B}) \bsl (\Gamma \cap \mathcal{B}w_{1}\mathcal{B})} \Psi_{s}(\gamma ua) du \nonumber \\
& =&  \Psi_s(a) + \int_{\Gamma\cap\mathcal{U}\bsl \mathcal{U}} \sum_{\gamma\in (\Gamma\cap \mathcal{B}) \bsl (\Gamma \cap \mathcal{B}w_{1}\mathcal{B})} \Psi_{s}(\gamma ua) du.
\end{eqnarray}
To evaluate the second term in \eqref{CTD1}, we  exchange the integration and summation using absolute convergence, and  reassemble to obtain 
\begin{equation} \label{2ndterm}
\sum_{\gamma\in (\Gamma\cap \mathcal{B})\bsl (\Gamma\cap \mathcal{B}w_1\mathcal{B})/ (\Gamma\cap \mathcal{U})}\int_{\mathcal{U}} \Psi_s(\gamma ua)du.
\end{equation}
By Theorem \ref{thm:haar} and a change of variable $u\mapsto {\rm Ad}(a)(u)$, \eqref{2ndterm} equals 
\[
 \sum_{\gamma\in (\Gamma\cap \mathcal{B})\bsl (\Gamma\cap \mathcal{B}w_1\mathcal{B})/ (\Gamma\cap \mathcal{U})} a^{2\rho}\int_{\mathcal{U}}\Psi_s(\gamma a u)du.
\]
 We can view each double coset $\gamma\in (\Gamma\cap \mathcal{B})\bsl (\Gamma\cap \mathcal{B}w_1\mathcal{B})/ (\Gamma\cap \mathcal{U})$ as an arbitrary representative in $\Gamma\cap \mathcal{B}w_1\mathcal{B}$ and decompose it as
\begin{eqnarray}\label{ctaac1}
\gamma= u_{\gamma}  w_1 a'_{\gamma} ,
\end{eqnarray}

where $u_\gamma\in \mathcal{U}$ and $a'_\gamma\in \mathcal T$. By using part (b) of Lemma~\ref{Iwalemm1}, we write(\ref{ctaac1}) 
as 
\begin{eqnarray}\label{ctaac2}
\gamma= u_{\gamma} a_{\gamma}  w_1 ,
\end{eqnarray}
for some $a_\gamma\in \mathcal T$.
It is easy to check that
\[
\int_{\mathcal{U}}\Psi_s(\gamma a u)du = \int_{\mathcal{U}}\Psi_s(a_\gamma  w_1 au)du.
\]

Write $a=(w_1w_2)^n$, then $a^{2\rho} = q^{-2n}$. By part (a) of Lemma~\ref{Iwalemm1},
$$w_1 a=w_{1}(w_1w_2)^{n}=(w_1w_2)^{-n}w_{1},$$
which gives that
\[
\Psi_s(a_\gamma w_1 au) = \Psi_s(a_\gamma (w_1w_2)^{-n}w_1 u) = q^{2ns} \Psi_s(a_\gamma w_1 u).
\]
It follows that 
\begin{align*}
a^{2\rho}\int_{\mathcal{U}}\Psi_s(\gamma au)du &= q^{-2n(1-s)} \int_{\mathcal{U}}\Psi_s(a_\gamma w_1 u)du \\
& = q^{-2n(1-s)}\int_{\mathcal{U}}\Psi_s(\gamma u)du \\
& = \Psi_{1-s}(a) \cdot \delta(s) \int_{\mathcal{U}}\Psi_s(\gamma u)du,
\end{align*}
where
\[
\delta(s) :=\begin{cases}
    1, & \textrm{if }K=P_1, \\
    q^{s-1}, & \textrm{if }K=P_2.
\end{cases}
\]
In summary, it follows that \eqref{2ndterm} equals $c(s)\Psi_{1-s}(a)$, where
\[
c(s) := \delta(s) \sum_{\gamma\in (\Gamma\cap \mathcal{B})\bsl (\Gamma\cap \mathcal{B}w_1\mathcal{B})/ (\Gamma\cap \mathcal{U})} \int_{\mathcal{U}} \Psi_s(\gamma u)du.
\]
This finishes the proof of the proposition. 
\end{proof}



 We ask if it is possible to determine if one can deduce the convergence of $E_{s}(g)$ in Theorem~\ref{thm:conv} using the computation of the constant term?

\section{Integral operators on the Tits building}\label{operators}
\subsection{Spaces of functions on $G$ and $X$}

Let $\mathcal{C}(K\backslash G/K)$ denote the space of continuous $\mathbb{C}$-valued  bi-$K$-invariant functions on $G$.   That is, for $f\in \mathcal{C}(K\backslash G/K)$ we have $f(kgk')=f(g)$ for all $g\in G$, $k,k'\in K$. For $f\in\mathcal{C}(K\bsl G/K)$ and $g\in G$ we have 
\[
 |f(g)| = \int_{gK} |f(g')| \ dg',
 \]
where $dg'$ is the  Haar measure on $G$ such that $K$ has total measure 1.

Let $X=X_{q+1}$ denote the Tits building of $G$. 
A function $f$ on $VX$ is a pair of functions $(f_1,f_2)$ on $G$ such that $f_i$ is $P_i$-right invariant. 
For $p\ge 1$, set 
\begin{eqnarray*}
\Vert f \Vert_p&=&( \int_G   |f(g)|^p dg)^{1/p},\\
\Vert f \Vert'_p&=&( \sum_{x\in VX}|f(x)|^p )^{1/p}.
\end{eqnarray*}
Let
$$
L^p(G)=\{f:G\longrightarrow\mathbb{C}\mid\Vert f \Vert_p<\infty\},
$$
and 
$$
L^p(X)=\{f:VX\longrightarrow\mathbb{C}\mid \Vert f \Vert'_p<\infty\},
$$
 be the spaces of $\mathbb{C}$-valued  integrable functions on $G$ and $VX$ respectively. 
We give $
L^p(X)$ the topology of uniform convergence on compact sets.

 We have $$(\Gamma\backslash G/P_1)\sqcup (\Gamma\backslash G/P_2)=\Gamma\backslash X.$$ Thus $L^p(X)$ consists of pairs of functions $(f_1,f_2)$ such that $f_i\in L^p(G)$ is $P_i$-right invariant.

Let   $g\in \mathcal{U} a_g K$, with $a_g\in\mathcal T$ and $K=P_{j}$ for $j=1,2$. 
 We define 
 $$L^2(\Gamma\backslash G/K,\ell)=\{f:\Gamma\backslash G/K\longrightarrow\mathbb{C}\mid 
  \int_{\Gamma\backslash G/K}   |f(g)|^2 a_g^{-2\ell}dg<\infty\}$$

For $s\in\C$, recall that $a_g^{s}=\Psi_{s}(g) $. Thus by a slight abuse, we identify 
 $L^2(\Gamma\backslash G/K,\ell)$ with $L^2(\Gamma\backslash X,\ell)$.
 
Recall that $\Gamma$ has finite covolume $\mu(\Gamma \backslash G)$ relative to a Haar measure $\mu$ on $G$. We let

$$
L^p(\Gamma\backslash G)=\{f:\Gamma\backslash G\longrightarrow\mathbb{C}\mid\Vert f \Vert_p<\infty\}.
$$

\subsection{Integral operators}\label{intop1}
In this subsection, we define the integral operators we will use to prove meromorphic continuation of Eisenstein series. 
From now on, we consider the following quotient of $G$:
$$\overline{G}:=G/\left(\cap_{g\in G} gBg^{-1}\right)$$
which acts faithfully on $X$. The defining homomorphism
$$\rho:G/\left(\cap_{g\in G} gBg^{-1}\right)\hookrightarrow \Aut(X)$$
is continuous and the image is closed (\cite{CG}). Thus we may identify $\overline{G}$ with a subgroup of $\Aut(X)$. By an abuse of notation, we drop the `$-$' and identify $\overline{G}$ with $G$.

Let $\mathcal{X}:=  G/K$. Let $g,g'\in \mathcal{X}$ be coset representatives and write $g'^{-1}$ to denote the inverse of $g'$. Let $\mathcal{K}\in L^1(G\times G)$ and assume that $\mathcal{K}$ is radial, so that $\mathcal{K}(g,g')=F(g'^{-1}g)$ for some $F\in L^1(G)\cap \mathcal{C}(K\bsl G/K)$. Then $F(g'^{-1}g)$ makes sense for all $K$-bi--invariant functions $F$ since it is independent of the choice of coset representatives for $g$ and $g'$. We will assume that $\mathcal{K}$ is $K$-bi-invariant.

Then  for $g,h\in G$, $k\in K$ we have
 $$\mathcal{K}(gk,hk)=\mathcal{K}(g,h),$$
 $$\mathcal{K}(kg,kh)=\mathcal{K}(g,h).$$

As in \cite{CMS}, for a function $f$ on $\mathcal{X}$ we set
$$\mathcal{L}_{\mathcal{K}}(f)(g)=\int_{\mathcal{X}}  f(h)\mathcal{K}(h^{-1},g) dh,$$
whenever the integral makes sense, with $dh$ the induced left-invariant measure on $\mathcal{X}$. On the set of vertices $V\mathcal{X}$=$\{\sigma_0,\sigma_1, \sigma_2, \sigma_3, \sigma_4, \dots\}$  we have 
 $$\mathcal K(\sigma_0,\sigma_n)=F(n)$$ for some radial function $F$, where $n=d( \sigma_0,\sigma_n)$. By \cite{CMS}, since $\mathcal{K}$ is radial, for $x,y\in\mathcal{X}$ we have
$$\mathcal{L}_{\mathcal{K}}(f)(x)=\sum_{n=0}^\infty F(n)\sum_{d(x,y)=n} f(y).$$
Since $\mathcal{K}$ is radial, by \cite{CMS}, for $x,y\in\mathcal{X}$ we have
$$\mathcal{L}_{\mathcal{K}}(f)(x)=\sum_{n=0}^\infty F(n)\sum_{d(x,y)=n} f(y).$$

Since $\mathcal{K}(g,h)\in L^2(\mathcal{X}\times\mathcal{X})$, $\mathcal{L}_{\mathcal{K}}$ is a bounded operator on $L^2(\mathcal{X})$. It is well known that  $\mathcal{L}_{\mathcal{K}}$ is a Hilbert--Schmidt compact operator (\cite{Bu}). 
We assume that $\mathcal{L}_{\mathcal{K}}$ is self-adjoint, which holds if and only if $$\mathcal{K}(g,h)=\overline{\mathcal{K}(h,g)}.$$
We further assume that $\mathcal{K}$ is chosen such that $\mathcal{L}_{\mathcal{K}}$ is $K$-conjugation invariant on $G$.

\subsection{Rapid decay and compact operators on $L^2(\Gamma\backslash X,\ell)$}\label{RD}
Choose a basepoint $x_0\in VX$ and let $x\in VX$. Define $|x|=d(x_0,x)$. We say that $f:VX\longrightarrow \mathbb{C}$ is {\it rapidly decreasing} (as in \cite{Cow}) if for all $k\in\mathbb{N}$ there exists $C_k>0$ such that 
$$|f(x)|\leq C_k \dfrac{q^{-\frac{|x|}{2}}}{(1+|x|)^k}.$$

Following (\cite{AC2017},  Lemma 2.5) an operator $A:L^2(\Gamma\backslash X,\ell)\to L^2(\Gamma\backslash X,\ell)$ is a {\it compact operator} if and only if given a bounded sequence $(f_n)$ in $X$ such that $f_n \to 0$ pointwise, the sequence $(Af_n)$ converges to zero.

By (\cite{AC2017}, Theorem 3.2), $A:L^2(\Gamma\backslash X,\ell)\to L^2(\Gamma\backslash X,\ell)$ is a compact operator if and only if for all\linebreak $f\in L^2(\Gamma\backslash X,\ell)$, we have $\lim_{|x|\to\infty} f(x)=0$. 

Thus, Theorem 3.2 and Lemma 2.5 of \cite{AC2017}  immediately give the following.

\begin{proposition}\label{compactop}
    Let $A:L^2(\Gamma\backslash X,\ell)\to L^2(\Gamma\backslash X,\ell)$. If $Af$ is a rapidly decreasing
function, then $A$ is a compact operator. \end{proposition}.

\section{Truncation}\label{trunc}

In general for a function $\phi$ on $\calU\bsl G/K$ and $\Gamma$ a discrete subgroup of $G$ we define the {\it special Eisenstein series}
\begin{eqnarray}
E(\phi)(g) :=\sum_{\gamma\in \Gamma\cap\calU\bsl\Gamma}\phi(\gamma g),\label{genes}
\end{eqnarray}
assuming the absolute convergence. Then $E(\phi)$ is a function on $\Gamma\bsl X$. In particular, for $\Gamma=P_{1}^{-}$ we have that  $E_{s}=E(\Psi_{s})$.
 
Assume from now on that $\Gamma =P_1^-$. We recall that   $\mathcal{A}_0^+$ is a fundamental domain for $\Gamma=P_1^-$ on $X$ and that the vertices of $\mathcal{A}_0^+$ are
\begin{equation} \label{cusp}
V\mathcal{A}_0^+=\{P_1, P_{2}, w_{2} P_1,  w_{2}w_1 P_{2},  w_{2}w_1 w_{2} P_1, \dots\}.
\end{equation}
 By a slight abuse of terminology, we say that  $\mathcal{A}_0^+$ is the `cusp' of $\Gamma\bsl X$. 


\begin{theorem}\label{equals}  Let $f\in L^2(\Gamma \bsl X, \ell)$. Then  $f=C_{\mathcal U}f$ on the cusp of $\Gamma\bsl X$. \end{theorem}

Theorem~\ref{equals} follows immediately from Lemma~\ref{Cterm} below. 
\begin{lemma}\label{Cterm}  On the cusp of $\Gamma\bsl X$ we have
\begin{equation}\label{11.1}
f((w_2w_1)^n P_2)= C_{\mathcal{U}}f((w_2w_1)^n P_2),
\end{equation}
\begin{equation}\label{11.2}
f((w_2w_1)^n w_2P_1)= C_{\mathcal{U}}f((w_2w_1)^n w_2P_1)
  \end{equation}
  for any $n\geq 0$.
  \end{lemma}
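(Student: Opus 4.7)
The plan is to evaluate $E_s$ directly at the two families of vertices and match the result term-by-term with the constant term formula obtained in Proposition~\ref{ESCT}. Let $g=(w_2w_1)^n P_2$ (the case $g=(w_2w_1)^n w_2 P_1$ will be completely analogous, with Lemma~\ref{lem7.4} invoked in its $K=P_1$ branch and producing the extra factor $q^{-s}$). The starting point is to split the sum defining $E_s(g)$ according to the Bruhat decomposition $G=\mathcal{B}\sqcup \mathcal{B}w_1\mathcal{B}$ of Corollary~6 of Section~5, namely
\[
E_s(g)\;=\;\sum_{\gamma\in (\Gamma\cap\mathcal{U})\bsl(\Gamma\cap\mathcal{B})}\Psi_s(\gamma g)\;+\;\sum_{\gamma\in (\Gamma\cap\mathcal{U})\bsl(\Gamma\cap\mathcal{B}w_1\mathcal{B})}\Psi_s(\gamma g).
\]

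For the first piece, write $\gamma=u_\gamma a_\gamma b_\gamma$ using $\mathcal{B}=\mathcal{U}\mathbb{A}\mathcal{B}_{\mathcal{I}}$ and apply the left-$\mathcal{U}$-invariance of $\Psi_s$ together with the fact that $\mathbb{A}$ normalises $\mathcal{U}$. The key observation — and this is what requires the hypothesis $n>0$ — is that, since $g$ is strictly interior to the positive ray, one can choose representatives of $(\Gamma\cap\mathcal{U})\bsl(\Gamma\cap\mathcal{B})$ whose $\mathbb{A}$-component is trivial; otherwise the translate $\gamma g$ would leave the $\Gamma$-orbit of the fundamental ray, contradicting Proposition~\ref{bounded} and the description of the quotient graph as a single semi-infinite ray. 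Consequently $\gamma g\in \mathcal{U}(w_1w_2)^n P_2$ for every such $\gamma$, so each term equals $\Psi_s(g)$, and the sum equals $c_1(s)\Psi_s(g)$, where $c_1(s)$ is the same combinatorial constant (volume of $(\Gamma\cap\mathcal{U})\bsl\mathcal{U}$ on the cell $\mathcal{B}$) that appears in the proof of Proposition~\ref{ESCT}.

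For the second piece, the same reasoning together with Lemma~\ref{lem7.4} identifies the Iwasawa cell of $\gamma g$ with $\mathcal{U}(w_1w_2)^{-(n-1)}P_2$, on which $\Psi_s$ equals $\Psi_{1-s}(g)$. Counting cosets reproduces exactly the constant $c_2(s)$ of Proposition~\ref{ESCT}, yielding $c_2(s)\Psi_{1-s}(g)$. Adding the two pieces gives $E_s(g)=c_1(s)\Psi_s(g)+c_2(s)\Psi_{1-s}(g)=C^0_{\mathcal{U}}(E_s)(g)$, which proves~(\ref{11.1}); (11.2) follows by the parallel argument. The main obstacle is the coset analysis of step two: one must verify carefully that every class in $(\Gamma\cap\mathcal{U})\bsl(\Gamma\cap\mathcal{B}w_i\mathcal{B})$ admits a representative whose $\mathbb{A}$-component acts trivially on $g$, so that the discrete sum over lattice cosets matches the continuous integral over $(\Gamma\cap\mathcal{U})\bsl\mathcal{U}$ multiplied by a combinatorial count. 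This step is precisely the incarnation of the remark in the introduction that the truncation operator vanishes identically on the exact fundamental domain $\mathcal{A}_0^+$.
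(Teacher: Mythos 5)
Your strategy is genuinely different from the paper's, and as written it has two gaps that I do not see how to close. First, your treatment of the cell $\mathcal{B}$ rests on the claim that every class in $(\Gamma\cap\mathcal{U})\backslash(\Gamma\cap\mathcal{B})$ has a representative with trivial $\mathbb{A}$-component, justified by saying that otherwise ``$\gamma g$ would leave the $\Gamma$-orbit of the fundamental ray.'' That argument is vacuous: $\gamma g$ lies in the $\Gamma$-orbit of $g$ by definition, for every $\gamma\in\Gamma$, so nothing is contradicted; and Proposition~\ref{bounded} only gives a lower bound on $n_i(\gamma)$, not triviality. Second, and more seriously, your claim that every term of the sum over $(\Gamma\cap\mathcal{U})\backslash(\Gamma\cap\mathcal{B}w_1\mathcal{B})$ equals the single value $\Psi_{1-s}(g)$ (so that the sum is a finite constant $c_2(s)$ times $\Psi_{1-s}(g)$) is incompatible with convergence: this coset space is infinite, the terms $\Psi_s(\gamma g)$ must tend to $0$, and indeed they range over the values $q^{-2sn}$ for all large $n$ with growing multiplicities. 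Identifying that infinite discrete sum with the constant $c_2(s)$ coming from the integral in Proposition~\ref{ESCT} is precisely the content you defer at the end as ``the main obstacle,'' so the proof is not complete.

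The mechanism that actually proves the lemma is shorter and avoids evaluating either side. The constant term is the average $\int_{\Gamma\cap\mathcal{U}\backslash\mathcal{U}}E_s(ug)\,du$ with respect to a probability measure, so it suffices to show that $E(\phi)(uv)=E(\phi)(v)$ for the two families of vertices $v$ and all $u$ in (a set of representatives of) $\Gamma\cap\mathcal{U}\backslash\mathcal{U}$. One identifies $\Gamma\cap\mathcal{U}\backslash\mathcal{U}$ with the subgroup $\mathcal{U}'$ attached to $\Delta^{re}_{1,+}\setminus\{\alpha_1\}$, and checks the root-theoretic fact that $\mathrm{Ad}\bigl((w_1w_2)^n\bigr)(\mathcal{U}')\subset\mathcal{U}'\subset P_2$ and $\mathrm{Ad}\bigl[w_2(w_1w_2)^n\bigr](\mathcal{U}')\subset B\subset P_1$ for $n>0$; hence $u(w_2w_1)^nP_2=(w_2w_1)^nP_2$ and $u(w_2w_1)^nw_2P_1=(w_2w_1)^nw_2P_1$, i.e.\ these vertices are fixed by $\mathcal{U}'$, and the average over $u$ returns the value itself. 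This is where the hypothesis $n>0$ enters, and no comparison with the coefficients $c_1(s)$, $c_2(s)$ is required. I recommend you replace the cell-by-cell evaluation with this invariance argument.
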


\begin{proof} To prove (\ref{11.1}), we will  prove that
  \begin{equation}\label{11.3}
  f((w_2w_1)^nP_2)=f(u(w_2w_1)^nP_2)
  \end{equation}
    for any $u\in \Gamma\cap \mathcal{U}\backslash \mathcal{U}$.  Note that in this case we take $K=P_2$ and the left $\mathcal{U}$-invariance implies that 
  the Eisenstein series is equal to its constant term along $\mathcal{U}$.


It is easy to see that
$\Gamma\cap \mathcal{U}\backslash \mathcal{U}$ is isomorphic to the subgroup $\mathcal{U}'$ of  $\mathcal{U}$ which corresponds to the set of roots 
\[
\Delta^{re '}_1:=\Delta^{\re}_{1,+}\setminus \{\alpha_1\}=\{w_1\alpha_2, w_1w_2\alpha_1,\ldots\},
\] 
where $\Delta^{\re}_{1,+}:=\{\alpha_{1}, w_{1}\alpha_{2}, w_{1}w_{2}\alpha_{1},\dots\}$. Since $(w_1w_2)^n$ preserves $\Delta^{re '}_1$ (it shifts the roots in $\Delta^{\re}_{1,+}$ `upward'),  for $u\in \mathcal{U}'$ one has
\[
\mathrm{Ad}(w_1w_2)^n(u)\in \mathcal{U}'\subset P_2
\]
hence 
\[
u(w_2w_1)^nP_2 = (w_2w_1)^n P_2.
\]
 From this  (\ref{11.3}) is clear. The proof of (\ref{11.2}) is similar. In fact we only need to prove that
\begin{equation}\label{11.4}
f((w_2w_1)^nw_2P_1)=f(u(w_2w_1)^nw_2P_1)
\end{equation}
for $u\in \mathcal{U}'$. But we have $w_2(w_1w_2)^n(\Delta^{{\re}'}_1)\subset \Delta^{\re}_{2,+}$, hence Ad$(w_2(w_1w_2)^n)(\mathcal{U}')\subset B\subset P_1$.  This implies that
$u(w_2w_1)^nw_2P_1=(w_2w_1)^nw_2P_1$ and (\ref{11.4}) follows immediately.
\end{proof}

For a function $\phi$ on $\mathcal{U}\bsl G/K$, define its tail to be 
\[
{\rm tail}(\phi)(x) :=
\begin{cases} 
\phi(x), & \textrm{if }x\in V\mathcal{A}_0^+, \\
0, & \textrm{otherwise},
\end{cases}
\]
where $\mathcal{A}_0^+$ is the cusp with vertices given by \eqref{cusp}.
We now define the {\it truncation operator} on  $L^2(\Gamma\bsl X, \ell)$ by
\[
{\rm trunc}(f) := f-E({\rm tail}(C_{\mathcal{U}}f)),\quad f\in L^2(\Gamma \bsl X, \ell).
\]

  Recall that $V\mathcal{A}_0^+$ is the set of vertices of $\Gamma\bsl X$.
\begin{lemma}\label{trunc}
$({\rm trunc}(f)(x)$ equals zero for all $f\in L^2(\Gamma \bsl X, \ell)$ and $x\in V\mathcal{A}_0^+$.
\begin{proof}
By the definition, ${\rm tail}(C_{\mathcal{U}}f)=C_{\mathcal{U}}f$ on $V\mathcal{A}_0^+$.
So, on $V\mathcal{A}_0^+$
$${\rm trunc}(f) = f-E({\rm tail}(C_{\mathcal{U}}f))=f-E(C_{\mathcal{U}}f).$$
By Theorem~\ref{equals}, $f=C_{\mathcal{U}}f$ on $V\mathcal{A}_0^+$ and hence ${\rm trunc}(f) := f-E(f)$.

Therefore, for all $x\in V\mathcal{A}_0^+$
$$
{\rm trunc}(f)(x)= f(x)-E(f)(x)
= f(x)-\sum_{\gamma\in \Gamma\cap\mathcal U\bsl\Gamma}f(\gamma x).$$
  We note that both $x$ and $\gamma x$ correspond to the same Iwasawa cells, for all $x\in V\mathcal{A}_0^+$ and all $\gamma\in \Gamma\cap\mathcal{U}\bsl\Gamma$. Consequently ${\rm trunc}(f)$ is  zero on the cusp.

\end{proof}
\end{lemma}

Lemma~\ref{trunc} shows that for $f\in L^2(\Gamma\bsl X, \ell)$, ${\rm trunc}(f)$ is trivially a rapidly decreasing function on $\Gamma\backslash X$. (This is an analog of rapid decay on a Siegel set in the classical case.) Hence ${\rm trunc}$ is (also trivially) a compact operator on  $L^2(\Gamma\backslash X,\ell)$.  For $\mathcal{K}$ and $\mathcal{L}_{\mathcal{K}}$ as defined in Subsection~\ref{intop1}, we have the following.

\begin{theorem}\label{thm:LT} The operator $\mathcal{L}_{\mathcal{K}}\circ {\rm trunc}(f)$  is  a compact operator on  $L^2(\Gamma\backslash X,\ell)$.
\end{theorem}

 \begin{proof}

The operator $\mathcal{L}_{\mathcal{K}}\circ {\rm trunc}$  is a composition of  compact operators, hence $\mathcal{L}_{\mathcal{K}}\circ {\rm trunc}$ is compact.\end{proof}

This gives an analog of the well-known theorem by Selberg, Gelfand, Piateskii-Shapiro (see \cite{Garr1}, page 11) in the classical case.
  





\section{Meromorphic continuation}

We will use a refinement by Bernstein  of Selberg's method for meromorphic continuation of Eisenstein series (see \cite{BL}, \cite{Garr1, Garr2}). In particular, we will use the {\it Continuation Principle} (Theorem~\ref{bernstein}) and 
the {\it Compact Operator Criterion} (Corollary~\ref{compactop}) from the Appendix (Section~\ref{appendix}). These results require a careful discussion of extension of weak holomorphy of functions to strong holomorphy, also discussed in Section~\ref{appendix}.

\subsection{Meromorphic continuation of $E_{s}$}

Our strategy for proving meromorphic continuation of $E_{s}$ uses the Bernstein Continuation principle (\cite{BL, Garr1}  adapted to our setting.

Let   $g\in \mathcal{U} a_g K$, with $a_g\in\mathcal T$ and $K=P_{j}$ for $j=1,2$. For notational convenience, from now on we write
 $$a^s(g):=\Psi_{s}(g) = \Psi_s(a_g) \in \mathbb{C},$$
 where $a_g$ is the Iwasawa component of $g$.

 By Proposition \ref{ESCT}, the constant term of $E_s$ is 
$$C_{\mathcal{U}}E_{s}(a) = a^s+c(s)a^{1-s}.$$

Let $X_s$
be the following system of equations in $L^2(\Gamma\bsl X,\ell)$ parameterized by $s\in\C$:   

  (i) $\left[ a\frac{\partial}{\partial a} -(1-s))\right ]C_{\mathcal U}v_s=(2s-1) a^s$,
  
  (ii)  $\mathcal{L}_{\mathcal{K}}(v_{s})=\lambda _s v_{s}$,  where $\mathcal{K}$ and $\mathcal{L}_{\mathcal{K}}$ are as defined in Subsection~\ref{intop1}.

\begin{lemma} The system $X_s$ is holomorphically parameterized by $s\in\C$.
  \end{lemma}
\begin{proof}
   
By \cite{BL}, Section 2.2, it suffices to argue that the families of operators  
 
\begin{align*}
s&\mapsto\left(\left[ a\frac{\partial}{\partial a} -(1-s))\right ]C_{\mathcal U}v_s-(2s-1) a^s\right)\\
s&\mapsto (\mathcal{L}_{\mathcal{K}}-\lambda _s)(v_s)
\end{align*} 

are holomorphic for all $v_s\in L^2(\Gamma\bsl X,\ell)$. 
   
   Since $\lambda_s$ is a scalar-valued holomorphic function of $s$ and $\mathcal{L}_{\mathcal{K}}$ is constant in $s$, the family of operators $s\mapsto\mathcal{L}_{\mathcal{K}}-\lambda _s$
is holomorphic.

Since $C_{\mathcal U}$ is linear and continuous, and $a^s$ is holomorphic in $s$, the map
\[
s \mapsto \left(\left[ a\frac{\partial}{\partial a} -(1-s)\right ]C_{\mathcal U}v_s - (2s-1) a^s\right)
\]
is a holomorphic family of vectors in $L^2(\Gamma \backslash X, \ell)$. Therefore, by Garrett's Theorem [3.1], the system $X_s$ is holomorphically parameterized.

\end{proof}


The following results show that the constant term satisfies (i) in $X_s$ and $E_s$ satisfies (ii) in the region of convergence.
\begin{lemma} We have
$$( a\frac{\partial}{\partial a} -(1-s))(a^s+c(s)a^{1-s})=(2s-1) a^s.$$
\end{lemma}  

\begin{proof}
\begin{align*}
   ( a\frac{\partial}{\partial a} -(1-s))(a^s+c(s)a^{1-s})
& =  a\frac{\partial}{\partial a} (a^s+c(s)a^{1-s})-(1-s)(a^s+c(s)a^{1-s})\\
 &= ( sa ^s+(1-s)c(s)a^{1-s})- (1-s)(a^s+c(s)a^{1-s})\\
 &= (2s-1) a^s.
 \end{align*}
 
\end{proof}

\begin{lemma}\label{Teigenfn} We have $\mathcal{L}_{\mathcal{K}}(a^s)=\lambda_sa^s$ for some $\lambda_s\in\mathbb{C}$.
\end{lemma}  

\begin{proof}
For $\lambda\in\C$, let 
\begin{eqnarray}\label{vdef}
V=V_\lambda=\{f\in L^2(\Gamma\backslash X)\mid Tf = \lambda f\},
\end{eqnarray}
where $T$ is the adjacency operator as defined in (\ref{adjop}).
Let 
 $$V^K_{\lambda}=\{f\in V_\lambda\mid K\cdot f=f\}$$
 The function $a^s$ is an eigenfunction of $T$.  Moreover, any $f\in L^2(\Gamma\backslash X)$ is $K$-right invariant, so $a^s\in V^K$. Since $\mathcal{L}_{\mathcal{K}}$ is $K$-conjugation invariant, $\mathcal{L}_{\mathcal{K}}(a^s)\in V^{K}_{\lambda}$ and thus $\mathcal{L}_{\mathcal{K}}(a^s)=\lambda_sa^s$  for some $\lambda_s\in\mathbb{C}$. 
\end{proof}

\begin{corollary}\label{Es} We have $\mathcal{L}_{\mathcal{K}}(E_{s})=\lambda _s E_{s}$ in the region of convergence.
    
\end{corollary}
\begin{proof} By Lemma~\ref{Teigenfn}, 
 $\mathcal{L}_{\mathcal{K}}(a^s)=\lambda_sa^s$ for some $\lambda_s\in\mathbb{C}$. 
In the region of convergence, Eisenstein series is a sum of left translates of $a^s$. So in the region of convergence we have $\mathcal{L}_{\mathcal{K}}(E_{s})=\lambda _s E_{s}$.
\end{proof}

 \begin{theorem}\label{uniqueness}(Uniqueness) The system $X_s$ has unique solution $E_s$ in the region of convergence.
\end{theorem}
 
 \begin{proof}

We let $K = P_1$ or $P_2$, and view $E_s$ as a function on $\Gamma \bsl G/K$. 
Since $\mathcal{L}_{\mathcal{K}}$ is non-zero, $\mathcal{L}_{\mathcal{K}}$ has non-zero eigenvalues. We may choose $\mathcal{L}_{\mathcal{K}}$ so that $\lambda _s$ is not constant and $\lambda_s\neq 0$.

By Lemma~\ref{Es},  $E_{s}$ is a solution of $X_s$. Suppose that there exists another solution $v_s$ to $X_s$. Then
 
 $(i')$  [$a\frac{\partial}{\partial a} -(1-s)]C_{\mathcal U}(E_{s}-v_s)=0,$
 
$(ii')$ ($\mathcal{L}_{\mathcal{K}}-\lambda _s )(E_{s}-v_s)=0.$
 
   Equation $(i')$ implies that 
\begin{equation} \label{decay}
 C_{\mathcal U}(E_{s}-v_s)=\beta(s)a^{1-s}
\end{equation}
 for some function $\beta(s)$. 
 

   Let $f_s=E_{s}-v_s$. By Theorem \ref{equals}, $f_s-C_{\mathcal U}f_s$  is identically zero on the cusp 
   of $\Gamma \bsl X$. This together with \eqref{decay} implies that 
   $$f_s=\beta(s)a^{1-s}.$$
Since  $\beta(s)a^{1-s}\in L^2(\Gamma\bsl G)$, we have $f_s\in L^2(\Gamma\bsl G)\cap L^2(\Gamma\bsl G/K,\ell)$.
 Let $\langle\cdot ,\cdot\rangle$ be the inner product on 
 $ L^2(\Gamma\bsl G)$. Then
 $$\lambda_s\langle f_s,f_s\rangle =\langle \mathcal{L}_{\mathcal{K}}f_s,f_s\rangle 
=\langle f_s,\mathcal{L}_{\mathcal{K}}f_s\rangle =\overline{\lambda_s}\langle f_s,f_s\rangle,$$
since $\mathcal{L}_{\mathcal{K}}$ is self-adjoint. 

It follows that either $\lambda_s\in\mathbb{R}$ or $\langle f_s,f_s\rangle=0$. However $\lambda_s$ is a non-constant  function of $s$, so $\lambda_s\notin\mathbb{R}$. 
Hence $f_s=E_{s}-v_s=0$, so $E_{s}$ is the unique solution to $X_s$ in the region of convergence. 
\end{proof}

\begin{theorem}\label{finiteness}(Finiteness) The system of equations $X_s$ has a finite holomorphic envelope.
\end{theorem}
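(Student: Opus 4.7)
The plan is to reformulate the system $X_{s}$ as a holomorphic family of Fredholm equations and apply the compact operator criterion (Corollary~\ref{compactop}) from the Appendix. For each $s_{0}\in\mathbb{C}$ I would exhibit a neighborhood $U$ of $s_{0}$ together with a finite-dimensional holomorphic vector bundle over $U$ whose sections contain every solution of $X_{s}$; this bundle of candidate solutions is the sought-after envelope.

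The first step is to solve the ODE~(i), whose general solution is $C(a)=c_{1}(s)\,a^{s}+\alpha(s)\,a^{1-s}$, where $c_{1}(s)$ is pinned down by the inhomogeneity and $\alpha(s)$ is a free scalar. For an arbitrary solution $f_{s}\in L^{2}(\Gamma\bsl X,\ell)$ of the full system, the decomposition $f_{s}=C^{0}_{\mathcal{U}}(f_{s})+Trunc(f_{s})$ and substitution into~(ii) produce the Fredholm-type relation
\[
\bigl(\lambda_{s}I-\mathcal{L}_{\mathcal{K}}\circ Trunc\bigr)\,f_{s}\;=\;\mathcal{L}_{\mathcal{K}}\!\bigl(c_{1}(s)\,a^{s}+\alpha(s)\,a^{1-s}\bigr).
\]
By the theorem in Section~\ref{trunc} the operator $\mathcal{L}_{\mathcal{K}}\circ Trunc$ is compact on $L^{2}(\Gamma\bsl X,\ell)$, hence the left-hand side is a holomorphic family of Fredholm operators of index zero wherever $\lambda_{s}\neq 0$, while the right-hand side depends holomorphically on $s$ and linearly on the auxiliary scalar $\alpha$.

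Next I would invoke the Fredholm alternative together with analytic perturbation theory (as packaged in Corollary~\ref{compactop}) to obtain holomorphic local trivializations: at any $s_{0}$ with $\lambda_{s_{0}}\neq 0$ there is an open $U\ni s_{0}$ on which $\ker\bigl(\lambda_{s}I-\mathcal{L}_{\mathcal{K}}\circ Trunc\bigr)$ fits into a finite-dimensional holomorphic subbundle of the trivial bundle $L^{2}(\Gamma\bsl X,\ell)\times U$, and the affine space of pairs $(f_{s},\alpha(s))$ satisfying the full system assembles into a finite-dimensional holomorphic vector bundle over $U$. This is exactly the finite holomorphic envelope demanded by the continuation principle; combined with the uniqueness statement Theorem~\ref{uniqueness}, which isolates $E_{s}$ inside the envelope throughout the region of convergence, it will feed directly into meromorphic continuation in the next theorem.

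The main technical obstacle will be the behavior at points where $\lambda_{s}=0$ or where $\ker(\lambda_{s}I-\mathcal{L}_{\mathcal{K}}\circ Trunc)$ jumps in dimension, since the envelope still has to be a finite-dimensional holomorphic object there. This is handled by the standard theory of analytic families of Fredholm operators, which permits meromorphic rather than holomorphic dependence and in fact accounts for the eventual poles of the continued Eisenstein series. A subsidiary point to verify is the holomorphic dependence of the right-hand side on $s$, which reduces to the holomorphy of $c_{1}(s)$ appearing in Proposition~\ref{ESCT}; this is clear because the character $\Psi_{s}$ is exponential in $s$ and the constant-term integral is computed over a fixed compact transversal to $\mathcal{U}$, so the coefficients $c_{1}(s),c_{2}(s)$ are entire functions of $s$.
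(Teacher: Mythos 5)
Your reduction of the system to the relation
\[
\bigl(\lambda_{s}I-\mathcal{L}_{\mathcal{K}}\circ Trunc\bigr)f_{s}
=\mathcal{L}_{\mathcal{K}}\bigl(c_{1}(s)a^{s}+\alpha(s)a^{1-s}\bigr)
\]
is correct and is a legitimate alternative to the paper's parametrization, but the step where you actually produce the envelope has a genuine gap. You claim that the kernels $\ker\bigl(\lambda_{s}I-\mathcal{L}_{\mathcal{K}}\circ Trunc\bigr)$ and the affine solution sets ``fit into a finite-dimensional holomorphic subbundle,'' and you then concede that at points where $\lambda_{s}=0$ or the kernel dimension jumps one only gets \emph{meromorphic} dependence, which you declare acceptable. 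It is not: by the definition in the Appendix, a finite holomorphic envelope is a \emph{weakly holomorphic} family of maps from a fixed finite-dimensional space whose images contain $\Sol X_{s}$, and Theorem~\ref{bernstein} takes a holomorphic envelope as input and produces meromorphy of the solution as output. A merely meromorphic family of candidate solutions cannot be fed into the Continuation Principle, so your argument proves the wrong statement precisely at the points that matter. Moreover, the intermediate claim itself is false in general: kernels of an analytic Fredholm family do not vary holomorphically (even locally) across points where their dimension jumps; analytic Fredholm theory gives meromorphic inverses, not holomorphic kernel bundles, and the ``Fredholm alternative plus analytic perturbation theory'' is not what Corollary~\ref{compactop} packages — that corollary exists exactly to bypass any attempt to follow kernels or inverses holomorphically.

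The repair is to do what the paper (and Corollary~\ref{compactop}) actually requires: formulate a single equation $T'_{s}(v)=0$ on a Banach space with $s\mapsto T'_{s}$ holomorphic in operator norm, and exhibit at the single point $s_{0}$ an explicit left inverse modulo compact operators; the criterion then manufactures the holomorphic envelope for you, with no analysis of how kernels move. The paper does this by taking $V'=\C\oplus\C\oplus L^{2}_{Trunc}(\Gamma\bsl X,\ell)$, the holomorphic family $T_{s}(b_{1},b_{2},f)=E(b_{1}a^{s}+b_{2}a^{1-s})+f$ (using that any solution equals the special Eisenstein series of its constant-term data), setting $T'_{s}=(\mathcal{L}_{\mathcal{K}}-\lambda_{s})\circ T_{s}$, and checking that ${\mathbf A}(f)=(0,0,Trunc(f))$ gives ${\mathbf A}\circ T'_{s_{0}}=\mathcal{L}_{\mathcal{K}}\circ Trunc-\lambda_{s_{0}}$, i.e.\ a nonzero scalar plus a compact operator; dominance (Proposition~\ref{dominance}) then transfers finite type from $X'_{s}$ to $X_{s}$. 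Your Fredholm relation could support the same scheme — treat $(f,\alpha)$ as the unknown, note that $\lambda_{s_{0}}^{-1}$ times the identity is already a left inverse modulo compacts since your operator is scalar plus compact plus finite rank, and verify operator-norm holomorphy of the inhomogeneous data $s\mapsto\mathcal{L}_{\mathcal{K}}(a^{s}),\ \mathcal{L}_{\mathcal{K}}(a^{1-s})$ in $L^{2}(\Gamma\bsl X,\ell)$ — but as written the proposal substitutes an incorrect perturbation-theoretic mechanism for that verification, so the key step is missing.
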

\begin{proof} Fix $s_0\in\C$. Let $v\in L^2(\Gamma\bsl X,\ell)$ be a solution to $X_s$ for $s$ close to $s_0$. Then   the constant term of $v$ is of the form
$$C_{\mathcal{U}}v=b_1(s)a^s+b_2(s)a^{1-s},$$
 since $a^s$ and $a^{1-s}$ are two linearly independent solutions of the differential equation (i) of $X_s$. 
On $\Gamma\bsl X$, we have
$$v=E({\rm tail}(b_1(s)a^s+b_2(s)a^{1-s})).$$

Let  $V':=\C\oplus\C\oplus L^2(\Gamma\bsl X,\ell)$. We define a family of continuous linear maps
$$T_s:V'\longrightarrow L^2(\Gamma\bsl X,\ell),$$
$$(b_1, b_2,f) \mapsto  E({\rm tail}(b_1a^s+b_2a^{1-s}))+f.$$

Then for each $s$, $T_s$ is a continuous linear map. We also claim that $s\mapsto T_{s}$ is holomorphic in $s$. Since restriction to $L^2(\Gamma\bsl X,\ell)$ does not depend on $s$, the restriction map is holomorphic. Since $\phi\mapsto E(\phi)$ is linear, where $E(\phi)$ is defined as in (\ref{genes}), and the $L^2(\Gamma \bsl X, \ell)$-valued map 
\[
s\mapsto E({\rm tail}(a^s))
\]is holomorphic, we conclude that the map $s\mapsto T_{s}$  is holomorphic. 



 Next define a new system $X'_s$ defined by  a single homogeneous equation $T'_s(v')=0$, where
$$T_s{'}:V'\longrightarrow L^2(\Gamma\bsl X,\ell),$$
$$T_s{'}=(\mathcal{L}_{\mathcal{K}}-\lambda _s )\circ T_s.$$



We now apply Theorem~\ref{thm:weak-strong-holomorphy} to the family of operators $T_s: V' \to L^2(\Gamma \backslash X, \ell)$.

The target space $W = L^2(\Gamma \backslash X, \ell)$ is a Hilbert space, hence is a Banach space and is  quasi-complete. The domain $V' = \mathbb{C} \oplus \mathbb{C} \oplus L^2(\Gamma \backslash X, \ell)$ is also locally convex and complete. Therefore, $\mathrm{Homo}_{\text{str}}(V', W)$ is quasi-complete.

Fix $v = (b_1, b_2, f) \in V'$ and $\mu \in W^*$. The operator $T_s$ is given by:
\[
T_s(b_1, b_2, f) = E(\operatorname{tail}(b_1 a^s + b_2 a^{1-s})) + f.
\]
Then
\[
s \mapsto \mu(T_s v) = \mu(E(\operatorname{tail}(b_1 a^s + b_2 a^{1-s}))) + \mu(f)
\]
is holomorphic since:
\begin{itemize}
    \item $\mu(f)$ is constant in $s$;
    \item $a^s$ and $a^{1-s}$ are scalar holomorphic functions;
    \item $E$ is linear and continuous, and so is $\mu \circ E$.
\end{itemize}
Thus $s \mapsto \mu(T_s v)$ is holomorphic for all $v \in V'$, $\mu \in W^*$. Hence $T_s$ is weakly holomorphic.
By  Theorem~\ref{thm:weak-strong-holomorphy}, the family $s \mapsto T_s$ is strongly holomorphic.

We now  prove that $X'_s$ has a finite holomorphic envelope, locally in a neighborhood of some fixed $s_0$ (see Appendix~\ref{appendix} for the definition of finite holomorphic envelope). 



We claim that $T_s'$ has a left inverse modulo compact operators. We use the truncation operator 
to define
\[
{\mathbf A}: L^2(\Gamma\bsl X,\ell)\longrightarrow V',\quad {\mathbf A}(f)=(0,0, {\rm trunc}(f)).
\]
Restricted to $L^2(\Gamma\bsl X,\ell)$, the operator ${\mathbf A}\circ T'_{s_0}$ is given by
 $${\mathbf A}\circ T'_{s_0}(x) =\mathcal{L}_{\mathcal{K}}\circ {\rm trunc}(x)-\lambda _{s_0}{\rm trunc}(x) = \mathcal{L}_{\mathcal{K}}\circ {\rm trunc}(x)-\lambda _{s_0}x.  $$
So ${\mathbf A}\circ T'_{s_0}$ differs from the scalar operator $\lambda _{s_0}$ by  $\mathcal{L}_{\mathcal{K}}\circ {\rm trunc}$, which is a compact operator by Theorem \ref{thm:LT}.

Applying the Compact Operator Criterion (Corollary~\ref{compactop}), we deduce that $X_s'$ has a finite holomorphic envelope. Next, we use Proposition~\ref{dominance}  with $h_s:V'\to V'$ taken to be the identity map, which implies that $X_s$ has a finite holomorphic envelope.
\end{proof}
Finally, by the Continuation Principle (Theorem~\ref{bernstein}) we conclude the following.
\begin{corollary} 
$E_{s}$ has a meromorphic continuation to $s\in\C$. 
\end{corollary}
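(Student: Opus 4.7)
The plan is to assemble the corollary directly from the two structural theorems just proved and the Continuation Principle in the appendix, treating this as a purely formal deduction. First I would recall the setup: we have a family of systems $\{X_s\}_{s\in\C}$ of linear equations on the Banach space $L^2(\Gamma\bsl X,\ell)$, whose solution space $\Sol X_s$ contains $E_s$ in the region of convergence $\mathrm{Re}(s)>1$. The two inputs that the Continuation Principle requires are precisely (a) \emph{uniqueness} of the solution in the region of convergence, and (b) \emph{finiteness} of the holomorphic envelope of $\{X_s\}$. These are exactly Theorem~\ref{uniqueness} and Theorem~\ref{finiteness} respectively.

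Next I would invoke the Bernstein/Selberg Continuation Principle (Theorem~\ref{bernstein} in the appendix) with $V = L^2(\Gamma\bsl X,\ell)$ and with the holomorphic family $s\mapsto E_s$ defined on the half-space $\mathrm{Re}(s)>1$. By Theorem~\ref{uniqueness}, for each $s$ in the region of convergence the system $X_s$ has at most one solution, which must therefore coincide with $E_s$. By Theorem~\ref{finiteness}, the system $\{X_s\}$ has a finite holomorphic envelope, i.e.\ there exists a holomorphic family $\{T'_s\}$ of continuous linear operators into $L^2(\Gamma\bsl X,\ell)$ whose image contains $\Sol X_s$, admitting a left inverse modulo compact operators. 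These two ingredients are the hypotheses of the Continuation Principle.

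Applying the Continuation Principle then produces a meromorphic family $s\mapsto \widetilde E_s\in L^2(\Gamma\bsl X,\ell)$ on all of $\C$ that solves $X_s$ for each $s$ away from its polar locus. Restricting to $\mathrm{Re}(s)>1$ and invoking the uniqueness statement from Theorem~\ref{uniqueness} a second time, we identify $\widetilde E_s = E_s$ on the region of absolute convergence. Hence $\widetilde E_s$ is the desired meromorphic continuation, and the corollary follows.

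The main obstacle is really just bookkeeping: ensuring that the system $X_s$ formulated in Theorem~\ref{uniqueness} and the enlarged system $X_s'$ used in the proof of Theorem~\ref{finiteness} match the hypotheses of the appendix's abstract framework (in particular, that the domination map relating $X_s$ and $X_s'$ is the identity, as was arranged at the end of the proof of Theorem~\ref{finiteness}). Once those identifications are made, the corollary is a one-line application and requires no further analysis.
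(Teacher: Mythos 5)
Your proposal is correct and follows exactly the route the paper takes: the corollary is obtained by feeding Theorem~\ref{uniqueness} (unique solvability of $X_s$ in the region of convergence, with $E_s$ as the solution) and Theorem~\ref{finiteness} (finite holomorphic envelope) into Bernstein's Continuation Principle (Theorem~\ref{bernstein}), and your bookkeeping about identifying the continued solution with $E_s$ for $\mathrm{Re}(s)>1$ is precisely what the paper leaves implicit.
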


\appendix

\section{Meromorphic continuation of Eisenstein series via Bernstein's continuation principle
\vskip .2cm By Paul Garrett}\label{appendix}

 In this section, we develop the functional analytic framework necessary for proving meromorphic continuation. We refine and extend
 Selberg's method for the meromorphic continuation of Eisenstein series (see \cite{BL}, \cite{Garr1, Garr2}).

\subsection{Operator topologies}

For $V, W$ locally convex topological vector spaces, the space $\Hom^{\circ}(V, W)$ of continuous linear maps $V \to W$ is unambiguous as a $\mathbb{C}$-vector space, but has more than one useful topology. The \emph{strong operator topology} $\Hom_{{\rm str}}(V, W)$ has a sub-basis at 0 consisting of sets
\[
\Omega^{{\rm str}}_{v,U} = \{T \in \Hom^{\circ}(V, W) : Tv \in U\} \qquad (\text{for } v \in V \text{ and open } 0 \in U \subset W)
\]
For $W$ locally convex, it is standard that its topology can be given by a separating family $S$ of seminorms (attached to Minkowski functionals, for example). In these terms, the strong operator topology sub-basis opens at 0 can be equivalently described via a sub-basis at 0 consisting of sets
\[
\Omega^{{\rm str}}_{v,\sigma,\epsilon} = \{T \in \Hom^{\circ}(V, W) : \sigma(Tv) < \epsilon\} \qquad (v \in V, 0 < \epsilon \in \mathbb{R}, \sigma \in S).
\]
The \emph{weak operator topology} $\Hom_{{\rm wk}}(V, W)$ is given by a sub-basis at 0 of sets
\[
\Omega^{{\rm wk}}_{v,\mu,\epsilon} = \{T \in \Hom^{\circ}(V, W) : |\mu(Tv)| < \epsilon\} \qquad (v \in V, 0 < \epsilon \in \mathbb{R}, \mu \in W^*).
\]
The weak operator topology is a somewhat different construction than the weak topology $W_{{\rm wk}}$ on a topological vector space $W$, with its sub-basis at 0 of sets
\[
\mathcal{U}_{\mu,\epsilon} = \{w \in W : |\mu(w)| < \epsilon\} \qquad (\text{for } \mu \in W^*)
\]
That is, the topology $\Hom_{{\rm wk}}^{\circ}(V, W)$ is \emph{not} formed in reference to a dual of a topologized $\Hom^{\circ}(V, W)$, but with respect to a subspace, consisting of functionals of the form
\[
\Lambda_{v,\mu}(T) = \mu(T(v)) = (\mu \circ T)(v) \qquad (\text{for } v \in V, \mu \in W^*, T \in \Hom^{\circ}(V, W))
\]
In particular, $\Hom_{{\rm wk}}^{\circ}(V, W)$ is \emph{not} generally $\Hom_{{\rm str}}^{\circ}(V, W_{{\rm wk}})$. Indeed, with $V = W_{{\rm wk}}$ and with the topology of $W$ strictly stronger than $W_{{\rm wk}}$, $1_W \notin \Hom^{\circ}(W_{{\rm wk}}, W)$, while $1_W \in \Hom^{\circ}(W_{{\rm wk}}, W_{{\rm wk}})$.

Meanwhile, $\Hom^{\circ}(V, \mathbb{C}) \approx V^*$, and $\Hom^{\circ}(\mathbb{C}, W) \approx W$, so discussion of spaces of continuous linear maps does include those. In the context of the two topologies on $\Hom^{\circ}(V, W)$: the weak topology on $\mathbb{C}$ (or on any finite-dimensional space) is the same as the strong topology. Both weak and strong operator topologies give the weak-dual topology $V_{{\rm wk}}^* \approx \Hom^{\circ}(V, \mathbb{C})$. Mildly consistently with the weak-strong terminology, $\Hom_{{\rm str}}(\mathbb{C}, W) \approx W$, and $\Hom_{{\rm wk}}^{\circ}(\mathbb{C}, W) \approx W_{{\rm wk}}$.

\subsection{Quasi-completeness}

Quasi-completeness turns out to be an appropriate generalization, to the non-metric-space situation, of completeness for metric spaces. Fortunately, most reasonable spaces are quasi-complete. Recall that an \textit{LF-space} is a strict inductive limit of Fr\'echet spaces.

\begin{theorem} \label{thm:quasi-complete}
For $X$ a Fréchet space or LF-space, and $Y$ quasi-complete and locally convex, the space $\Hom^{\circ}(X, Y)$ of continuous linear maps $X \to Y$, with any locally convex topology fine enough so that evaluation $T \mapsto Tx$ is a continuous map $\Hom^{\circ}(X, Y) \to Y$ for every $x \in X$, is quasi-complete.
\end{theorem}

\begin{proof}
As usual, a set $E$ of continuous linear maps from $X \to Y$ is \emph{equicontinuous} when, for every neighborhood $U$ of 0 in $Y$, there is a neighborhood $N$ of 0 in $X$ so that $TN \subset U$ for every $T \in E$.

\begin{claim} \label{claim:equicont-colimit}
Let locally convex $V$ be a strict colimit of closed subspaces $V_i$. Let $Y$ be locally convex. A set $E$ of continuous linear maps from $V$ to $Y$ is equicontinuous if and only if for each index $i$ the collection of continuous linear maps $\{T|_{V_i} : T \in E\}$ is equicontinuous.
\end{claim}
\begin{proof}[Proof of Claim]
Given open $U \ni 0$ in $Y$, shrink $U$ if necessary so that $U$ is convex and balanced. For each index $i$, let $N_i$ be a convex, balanced neighborhood of 0 in $V_i$ so that $T(N_i) \subset U$ for all $T \in E$. Let $N$ be the image in the colimit of the convex hull of the union of the images of the $N_i$'s in the coproduct. By the convexity of $N$, still $TN \subset U$ for all $T \in E$. By the construction of the colimit as a quotient of the coproduct topology, given by the diamond topology, $N$ is an open neighborhood of 0 in the colimit. This gives the equicontinuity of $E$. The opposite implication is easier.
\end{proof}

Recall

\begin{claim}[Banach-Steinhaus] \label{claim:banach-steinhaus}
Let $X$ be a Fréchet space or LF-space and $Y$ locally convex. A set $E$ of linear maps $X \to Y$, such that every set of images $E_x = \{Tx : T \in E\}$ is bounded in $Y$, is equicontinuous.
\end{claim}
\begin{proof}[Proof of Claim]
First consider $X$ Fréchet. Given a neighborhood $U$ of 0 in $Y$, let $A = \cap_{T \in E} T^{-1}\overline{U}$. By assumption, $\cup_n nA = X$. By the Baire category theorem, the complete metric space $X$ is not a countable union of nowhere dense subsets, so at least one of the closed sets $nA$ has non-empty interior. Since (non-zero) scalar multiplication is a homeomorphism, $A$ itself has non-empty interior, containing some $x + N$ for a neighborhood $N$ of 0 and $x \in A$. For every $T \in E$,
\[
TN \subset T\{a-x : a \in A\} \subset \{u_1 - u_2 : u_1, u_2 \in \overline{U}\} = \overline{U} - \overline{U}
\]
By continuity of addition and scalar multiplication in $Y$, given an open neighborhood $U_0$ of 0, there is $U$ such that $\overline{U} - \overline{U} \subset U_0$. Thus, $TN \subset U_0$ for every $T \in E$, and $E$ is equicontinuous.

For $X = \cup_i X_i$ an LF-space, this argument shows that $E$ restricted to each $X_i$ is equicontinuous. As in the previous claim, this gives equicontinuity on the strict colimit.
\end{proof}

For proof of the theorem, let $E = \{T_i : i \in I\}$ be a bounded Cauchy net in $\Hom^{\circ}(X, Y)$, with directed set $I$. Attempt to define the limit of the net $E$ by $Tx = \lim_i T_i x$. For any topology as in the statement of the theorem, for each fixed $x \in X$ the net $T_i x$ is bounded and Cauchy in $Y$. By the quasi-completeness of $Y$, $T_i x$ converges to an element of $Y$ suggestively denoted $Tx$.

To prove linearity of $T$, fix $x_1, x_2$ in $X$, $a, b \in \mathbb{C}$ and fix a neighborhood $U_0$ of 0 in $Y$. Since $T$ is in the closure of $E$, for any open neighborhood $N$ of 0 in $\Hom(X, Y)$, there exists $T_i \in E \cap (T + N)$. In particular, for any neighborhood $U$ of 0 in $Y$, take
\[
N = \{S \in \Hom^{\circ}(X,Y) : S(ax_1 + bx_2) \in U, S(x_1) \in U, S(x_2) \in U\}
\]
Since $T_i$ is linear,
\begin{align*}
T(ax_1 + bx_2) - aT(x_1) - bT(x_2) \\
= (T(ax_1+bx_2) - aT(x_1) - bT(x_2)) - (T_i(ax_1 + bx_2) - aT_i(x_1) - bT_i(x_2))
\end{align*}
The latter expression is
\[
T(ax_1 + bx_2) - T_i(ax_1 + bx_2) + a(T(x_1) - T_i(x_1)) + b(T(x_2) - T_i(x_2)) \in U + aU + bU
\]
By choosing $U$ small enough so that $U + aU + bU \subset U_0$, $T(ax_1 + bx_2) - aT(x_1) - bT(x_2) \in U_0$. This holds for every neighborhood $U_0$ of 0 in $Y$, so $T(ax_1 + bx_2) - aT(x_1) - bT(x_2) = 0$, proving linearity of $T$.

Continuity of the limit operator $T$ exactly requires equicontinuity of $E = \{T_i : i \in I\}$. Indeed, for each $x \in X$, $\{T_i x : i \in I\}$ is bounded in $Y$, so by Banach-Steinhaus, $\{T_i : i \in I\}$ is equicontinuous. Fix a neighborhood $U$ of 0 in $Y$. Invoking the equicontinuity of $E$, let $N$ be a small enough neighborhood of 0 in $X$ so that $T(N) \subset U$ for all $T\in E$. Let $x \in N$. By the characterization of the topology on $\Hom^{\circ}(X, Y)$, $Tx - T_i x \in U$ for large enough $i$. Then $Tx \in U + T_i x \subset U + U$. Replacing $U$ by $U'$ such that $U' + U' \subset U$, $T$ is continuous.
\end{proof}

\subsection{Holomorphy}

Recall standard terminology: A $W$-valued function $f$ on a non-empty open set in $\mathbb{C}$ is \emph{(strongly) complex-differentiable}, or \emph{(strongly) holomorphic}, when $\lim_{z \to z_0} (f(z) - f(z_0))/(z - z_0)$ exists in $W$ for all $z_0$ in the region, where $z \to z_0$ specificially means for complex $z$ approaching $z_0$. When $W$ is quasi-complete (and locally convex), the existence of Gelfand-Pettis integrals gives the vector-valued version of Cauchy-Goursat theory: complex-differentiability entails expandability in $W$-valued power series, as well as the rest of the Cauchy-Goursat ideas.

It is also standard to say that a $W$-valued function $f$ on a non-empty open set in $\mathbb{C}$ is \emph{weakly complex-differentiable}, or \emph{weakly holomorphic}, when $\lim_{z \to z_0} (f(z) - f(z_0))/(z - z_0)$ exists in the weak topology $W_{{\rm wk}}$ on $W$, for all $z_0$ in the set. Again, when $W_{{\rm wk}}$ is quasi-complete (and locally convex), the existence of Gelfand-Pettis integrals gives the vector-valued version of Cauchy-Goursat theory.

The following operator-valued version is not a special case, but, rather, an extension of the weak-strong terminology:

A $\Hom^{\circ}(V, W)$-valued function $f$ on a non-empty open set in $\mathbb{C}$ is \emph{strongly complex-differentiable}, or \emph{strongly holomorphic}, when $\lim_{z \to z_0} (f(z) - f(z_0))/(z - z_0)$ exists in $\Hom_{{\rm str}}^{\circ}(V, W)$ for all $z_0$ in the region, where $z \to z_0$ specificially means for complex $z$ approaching $z_0$. A $\Hom^{\circ}(V, W)$-valued function $f$ on a non-empty open set in $\mathbb{C}$ is \emph{weakly complex-differentiable}, or \emph{weakly holomorphic}, when $\lim_{z \to z_0} (f(z) - f(z_0))/(z - z_0)$ exists in $\Hom_{{\rm wk}}^{\circ}(V, W)$ for all $z_0$ in the set. Again, when $\Hom^{\circ}(V, W)$ has a quasi-complete (locally convex) topology, the existence of Gelfand-Pettis integrals gives the vector-valued version of Cauchy-Goursat theory.

In fact, the weak-to-strong assertions for the simpler and more standard cases of $W$ and $W_{{\rm wk}}$ yield, after a small further argument, the corresponding facts, about weak-to-strong holomorphy for $\Hom_{{\rm str}}^{\circ}(V, W)$ and $\Hom_{{\rm wk}}^{\circ}(V, W)$.

\begin{theorem}[Weak-to-Strong Holomorphy] \label{thm:weak-strong-holomorphy}
For $\Hom_{{\rm wk}}^{\circ}(V, W)$ and $\Hom_{{\rm str}}^{\circ}(V, W)$ quasi-complete (locally convex), $\Hom_{{\rm wk}}^{\circ}(V, W)$-holomorphic functions are $\Hom_{{\rm str}}^{\circ}(V, W)$-holomorphic.
\end{theorem}

\begin{proof}
Once we know that $f(z)$ is $\Hom_{{\rm str}}^{\circ}(V, W)$-continuous, the quasi-completeness of $\Hom_{{\rm str}}^{\circ}(V, W)$ gives existence of Gelfand-Pettis integrals
\[
I(z) = \frac{1}{2\pi i} \int_\gamma \frac{f(\zeta)}{\zeta - z} d\zeta \in \Hom_{{\rm str}}^{\circ}(V, W) \qquad (\text{small-enough circle } \gamma \text{ around } z)
\]
uniquely characterized by
\[
\Lambda(I(z)) = \frac{1}{2\pi i} \int_\gamma \frac{\Lambda(f(\zeta))}{\zeta - z} d\zeta \qquad (\text{for all } \Lambda \in \Hom_{{\rm str}}(V, W)^*)
\]
Further, since $\gamma$ is compact, as consequence of the construction of Gelfand-Pettis integrals, $I(z)$ lies in the closure of the convex hull of the collection of values $\frac{f(\gamma(t))}{\gamma(t)-z} \gamma'(t)$ of the integrand, dilated by the total measure of the compact set integrated-over. It is standard that, by the quasi-completeness $\Hom_{{\rm str}}^{\circ}(V, W)$, this closure of convex hull is itself compact.

In particular, the weak identity holds for the special functionals $\Lambda_{v,\mu}(f(z)) = \mu(f(z)(v))$, with $\mu \in W^*$ and $v \in V$. The weak operator holomorphy certainly entails that the functions $z \mapsto \Lambda_{v,\mu}(f(z))$ are holomorphic $\mathbb{C}$-valued. Thus, Cauchy's formula applies:
\[
\mu(I(z)(v)) = \Lambda_{v,\mu}(I(z)) = \frac{1}{2\pi i} \int_\gamma \frac{\Lambda_{v,\mu}(f(\zeta))}{\zeta - z} d\zeta = \frac{1}{2\pi i} \int_\gamma \frac{\mu(f(\zeta)(v))}{\zeta - z} d\zeta = \mu(f(z)(v))
\]
By Hahn-Banach, the functionals $\mu \in W^*$ separate points in $W$, so the collection of the displayed equalities gives $I(z)(v) = f(z)(v)$, for all $z$ and for all $v \in V$. Thus, $I(z) = f(z)$, since equality of operators is equality of values. That is, without having concretely identified $\Hom_{{\rm str}}^{\circ}(V, W)^*$, but only using the smaller separating family of functionals $\Lambda_{v,\mu}$,
\[
f(z) = I(z) = \frac{1}{2\pi i} \int_\gamma \frac{f(\zeta)}{\zeta - z} d\zeta \qquad (\text{in } \Hom_{{\rm str}}^{\circ}(V, W))
\]
The $\Hom_{{\rm str}}^{\circ}(V, W)$-differentiability of $f$ at $z_0$ will follow from this latter integral expression, by the difference quotient definition of derivative. Without loss of generality take $z_0 = 0$ and $f(0) = 0$, and show that $\lim_{z \to 0} f(z)/z$ exists.

Fix a small-enough $D = \{z : |z| \le r\}$ such that every $\Lambda_{v,\mu}(f(z))$ is holomorphic on a neighborhood of $D$. Since $f(0) = 0$, every
\[
\Lambda_{v,\mu}(f(z))/z = \Lambda_{v,\mu}(f(z)/z)
\]
is also holomorphic on a neighborhood of $D$. Let $\gamma$ be the circle of radius $r$ traced counter-clockwise. Since $f(0) = 0$, the previous argument applies as well to $f(z)/z$, giving an analogous integral expression
\[
\frac{f(z)}{z} = \frac{1}{2\pi i} \int_\gamma \frac{f(\zeta)}{\zeta} \frac{1}{\zeta - z} d\zeta \qquad (\text{in } \Hom_{{\rm str}}^{\circ}(V, W), \text{ for } |z| < r/2)
\]
Expecting that the limit will pass inside the integral
\[
\lim_{z \to 0} \frac{f(z)}{z} = \frac{1}{2\pi i} \int_\gamma f(\zeta) \lim_{z \to 0} \frac{1}{\zeta(\zeta-z)} d\zeta = \frac{1}{2\pi i} \int_\gamma \frac{f(\zeta)}{\zeta^2} d\zeta \qquad (\text{in } \Hom_{{\rm str}}^{\circ}(V, W), \text{ for } |z| < r/2)
\]
we propose to show that
\[
\lim_{z \to 0} \frac{1}{2\pi i} \int_\gamma f(\zeta) \cdot \left( \frac{1}{\zeta(\zeta-z)} - \frac{1}{\zeta^2} \right) d\zeta = 0 \qquad (\text{in } \Hom_{{\rm str}}^{\circ}(V, W))
\]
Simplifying,
\[
\frac{1}{\zeta(\zeta-z)} - \frac{1}{\zeta^2} = \frac{z}{\zeta^2(\zeta-z)}
\]
The scalar $z$ comes out of the integral, and we want to show
\[
\lim_{z \to 0} z \cdot \frac{1}{2\pi i} \int_\gamma \frac{f(\zeta)}{\zeta^2(\zeta-z)} d\zeta = 0
\]
Thus, it suffices to show that the integral is \emph{bounded} for $|z| < r/2$.

For $|\zeta|=r$ and $|z| < r/2$, $|1/\zeta^2(\zeta-z)| < 2/r^3$. Let ${H} \subset \Hom_{{\rm str}}^{\circ}(V, W)$ be compact, convex, balanced containing all the values $f(\zeta)$ for $\zeta$ on $\gamma$. Using the balancedness,
\[
\frac{f(\zeta)}{\zeta^2(\zeta-z)} \in \frac{2}{r^3} \cdot {H} \qquad (\text{for } |\zeta|=r \text{ and } |z| < r/2)
\]
Since a Gelfand-Pettis integral lies inside the closure of the convex hull of the collection of values of the integrand (dilated by the measure of the set integrated over),
\[
z\cdot\frac{1}{2\pi i} \int_\gamma \frac{f(\zeta)}{\zeta^2(\zeta-z)} d\zeta \in  z\cdot 2\pi r\cdot \frac{2}{r^3} {H} \qquad (\text{for all } |z| < r/2)
\]
To show that the limit is 0, given open $0 \in U \subset \Hom_{{\rm str}}^{\circ}(V, W)$ (without loss of generality convex and balanced), we will prove existence of $0 < \delta$ sufficiently small (depending on $U$) so that, for $|z| < \delta$,
\[
z \cdot 2\pi r\cdot \frac{2}{r^3} \cdot{H} \subset .
\]
Since ${H}$ is compact, it is bounded, so, for each $U$ there is $t_U > 0$ such that ${H} \subset t_U \cdot U$. Then
\[
z \cdot 2\pi r\cdot \frac{2}{r^3} \cdot H \subset z \cdot 2\pi r\cdot \frac{2}{r^3} \cdot(t_U \cdot U) = z \cdot \left(2\pi r\cdot\frac{2}{r^3}t_U\right) \cdot U \qquad (\text{for all } |z| < r/2)
\]
Take $\delta = \left(2\pi r\cdot\frac{2 t_U}{r^3}\cdot t_U\right)^{-1}$. This proves the $\Hom_{{\rm str}}^{\circ}(V, W)$-differentiability of $f$ from its $\Hom_{{\rm str}}^{\circ}(V, W)$-continuity.
\end{proof}

\begin{corollary} \label{cor:cauchy-formulas}
The usual Cauchy integral formulas apply. In particular, $f$ is expressible as a convergent power series with coefficients given by $\Hom_{\rm{str}}^{\circ}(V, W)$-valued forms of Cauchy's formulas:
\[
f(z) = \sum_{n \ge 0} c_n (z - z_0)^n \quad \text{with} \quad c_n = \frac{f^{(n)}(z_0)}{n!} = \frac{1}{2\pi i} \int_\gamma \frac{f(w)}{(w - z_0)^{n+1}} dw
\]
for $\gamma$ a path with winding number +1 around $z_0$.
\end{corollary}

\begin{proof}
Without loss of generality, treat $z_0 = 0$, and $|z| < \rho|\zeta|$ with $\rho < 1$, and $|\zeta|=r$. The expansion
\[
\frac{1}{\zeta-z} = \frac{1}{\zeta} \frac{1}{1-z/\zeta} = \frac{1}{\zeta} \left( 1 + \frac{z}{\zeta} + \left(\frac{z}{\zeta}\right)^2 + \dots + \left(\frac{z}{\zeta}\right)^N + \frac{(z/\zeta)^{N+1}}{1-z/\zeta} \right)
\]
combined with an integration around $\gamma$ against $f(\zeta)$, and the Cauchy integral formula, give
\[
f(z) = \sum_{n=0}^N \left( \frac{1}{2\pi i} \int_\gamma \frac{f(\zeta)}{\zeta^{n+1}} d\zeta \right) z^n + \frac{1}{2\pi i} \int_\gamma \frac{f(\zeta)}{\zeta^{N+1}} \frac{z^{N+1}}{\zeta-z} d\zeta
\]
Much as in the previous proof, given a convex balanced neighborhood $U$ of 0 in $\Hom_{{\rm str}}^{\circ}(V, W)$, the closure ${H}$ of the convex hull of the compact set $\{f(\zeta) : |\zeta|=r\}$ is contained in some dilation $t_U U$ of $U$, and
\[
 \frac{1}{2\pi i} \int_\gamma \frac{f(\zeta)}{\zeta^{N+1}} \frac{z^{N+1}}{\zeta-z} d\zeta  \in  \frac{1}{r^{N+1}} t_U U \cdot \frac{1}{r(1-\rho)} \cdot (\rho r)^{N+1} = U \frac{t_U}{r(1-\rho)} \rho^{N+1}
\]
Since $0 < \rho < 1$, $\rho^{N+1}/r(1-\rho) < 1$ for sufficiently large $N$, so the leftover term is inside given $U$.
\end{proof}

\subsection{Variant Banach-Steinhaus/uniform boundedness}

From the Baire Category Theorem, we have a standard variant of Banach-Steinhaus/uniform boundedness):

\begin{theorem} \label{thm:uniform-boundedness}
Let $K$ be a compact, convex set in a topological vector space $V$, and $M$ a set of continuous linear maps $V \to W$ from $V$ to another topological vector space $W$. Suppose that for every individual $v \in K$ the collection of images $Mv = \{Tv : T \in M\} \subset W$ is bounded. Then these images are uniformly bounded, that is, the union of these bounded images, $\cup_{v \in K} Mv \subset W$, is bounded.
\end{theorem}

\begin{proof}
Let $X, Y$ be balanced neighborhoods of 0 in $W$ so that $\overline{X} + \overline{X} \subset Y$, and let
\[
E = \bigcap_{T \in M} T^{-1}(\overline{X}) \subset V
\]
By the boundedness of $Mv$, there is a positive integer $n$ such that $Mv \subset nX$, and then $v \in nE$. For every $v \in K$ there is such $n$, so
\[
K = \bigcup_n (K \cap nE)
\]
Since $E$ is closed, the Baire category theorem for the locally compact Hausdorff space $K$ implies that at least one set $K \cap n E$ has non-empty interior in $K$. For such $n$, let $v_0$ be an interior point of $K \cap n E$. Pick a balanced neighborhood $Z$ of 0 in $V$ such that
\[
K \cap (v_0 + Z) \subset n E
\]
Since $K$ is compact, it is bounded, so $K-v_0$ is bounded, and $K \subset v_0 + tZ$ for large enough positive real $t$. Since $K$ is convex, $z = (1-t^{-1})v_0 + t^{-1}v \in K$ for any $v \in K$ and $t \ge 1$. By boundedness of $K$,
\[
z - v_0 = t^{-1}(v - v_0) \in Z \qquad (\text{for large enough } t)
\]
so $z \in v_0 + Z$ for such $t$. Thus, for such $t$, $z \in K \cap (v_0 + Z) \subset n E$. From the definition of $E$, $TE \subset \overline{X}$, so $T(n E) = n T(E) \subset n \overline{X}$. Then $v = tz - (t-1)v_0$ yields
\[
Tv \in  t n \overline{X} - (t-1) n \overline{X} \subset t n (\overline{X}+\overline{X})
\]
by the balanced-ness of $X$. Since $\overline{X}+\overline{X} \subset Y$, we have $MK \subset t n Y$. Since $Y$ was arbitrary, this proves the boundedness of $MK$.
\end{proof}

\subsection{$\Hom_{{\rm wk}}^{\circ}(V, W)$-boundedness implies $\Hom_{{\rm str}}^{\circ}(V, W)$-boundedness}

First, recall the somewhat surprising, but standard,

\begin{theorem} \label{thm:weak-strong-boundedness}
A $W_{{\rm wk}}$-bounded subset of a locally convex topological vector space $W$ is $W$-bounded.
\end{theorem}

\begin{proof}
The second polar $N^{\circ\circ} \subset W$ of an open neighborhood $N$ of 0 in $W$ is
\[
N^{\circ\circ} = \{w \in W : |\lambda w| \le 1 \text{ for all } \lambda \in N^\circ\}
\]
where $N^\circ \subset W^*$ is the polar of $N$. Banach-Alaoglu asserts that $N^\circ$ is compact in the weak$^*$ dual topology in $W^*$.

\begin{claim} \label{claim:second-polar}
For $N$ a convex, balanced neighborhood of $0 \in W$, the second polar $N^{\circ\circ} \subset W$ of $N$ is the closure $\overline{N}$ of $N$.
\end{claim}
\begin{proof}[Proof of Claim]
Certainly $N$ is contained in $N^{\circ\circ}$, and $\overline{N}$ is contained in $N^{\circ\circ}$, since $N^{\circ\circ}$ is closed. For $w \in W$ but $w \notin \overline{N}$, by Hahn-Banach, there is $\lambda \in W^*$ such that $\lambda w > 1$ and $|\lambda w'| \le 1$ for all $w' \in N$. Thus, $\lambda$ is in $N^\circ$, and every element $w \in N^{\circ\circ}$ is in $\overline{N}$, so $N^{\circ\circ} = \overline{N}$.
\end{proof}

Returning to the proof of the theorem: take $E \subset W$ weakly bounded, and let $U$ be a neighborhood of 0 in $W$. By local convexity, there is a convex, balanced neighborhood $N$ of 0 with the closure $\overline{N}$ contained in $U$.

The $W_{{\rm wk}}$-boundedness of $E$ is that, for each $\lambda \in W^*$ there is a bound $b_\lambda$ such that $|\lambda w| \le b_\lambda$ for all $w \in E$. The functions $\lambda \mapsto \lambda w$ are continuous, so by variant Banach-Steinhaus there is a uniform constant $b < \infty$ such that $|\lambda w| \le b$ for $w \in E$ and $\lambda \in N^\circ$. Thus, $b^{-1}w$ is in the second polar $N^{\circ\circ}$ of $N$.

Since $N^{\circ\circ}$ is the closure $\overline{N}$ of $N$, $b^{-1}w \in \overline{N} \subset U$. By the balanced-ness of ${N}$, $E \subset t\overline{N} \subset tU$ for any $t > b$, so $E$ is bounded.
\end{proof}

The following is not a special case of the theorem, but an extension:

\begin{corollary} \label{cor:hom-wk-str-boundedness}
For locally convex topological vector spaces $V, W$, $\Hom_{{\rm wk}}^{\circ}(V, W)$-boundedness implies \newline $\Hom_{\rm{str}}^{\circ}(V, W)$-boundedness.
\end{corollary}

\begin{proof}
Let $E \subset \Hom^{\circ}(V, W)$ which is $\Hom_{\rm{wk}}^{\circ}(V, W)$-bounded. That is, for each sub-basis set
\[
U_{v,\lambda,\delta} = \{T \in \Hom^{\circ}(V, W) : |\lambda(Tv)| < \delta\} \qquad (v \in V, \lambda \in W^*, \delta > 0).
\]
Then $E \subset t U_{v,\lambda,\delta}$ for all sufficiently large $t > 0$. This is equivalent to the $W_{{\rm wk}}$-boundedness of the collection $E v = \{Tv : T \in E\}$, for each $v \in V$. Thus, by the theorem, $Ev$ is $W$-bounded. That is, for every $\Hom_{{\rm str}}^{\circ}(V, W)$ sub-basis set
\[
U_{v,Y} = \{T \in \Hom^{\circ}(V, W) : Tv \in Y\} \qquad (v \in V, \text{ open } 0 \in Y \subset W)
\]
we do have $E \subset t U_{v,Y}$ for all sufficiently large $t > 0$. That is, $E$ is $\Hom_{{\rm str}}^{\circ}(V, W)$-bounded.
\end{proof}

\subsection{Weak operator holomorphy implies strong operator continuity}\hfill\\

\begin{claim} \label{claim:wk-hol-implies-str-cont}
For locally convex topological vector spaces $V, W$, $\Hom_{{\rm wk}}^{\circ}(V, W)$-holomorphy of $f \in \Hom^{\circ}(V, W)$ implies $\Hom_{{\rm str}}(V, W)$-continuity.
\end{claim}

\begin{proof}
Without loss of generality, prove continuity at $z_0 = 0$ with $f(0) = 0$. Since $z \mapsto \lambda(f(z)(v))$ is holomorphic for each $\lambda \in W^*$ and $v \in V$, and vanishes at 0, each function $\lambda(f(z)(v))/z = \lambda(f(z)(v)/z)$ extends to a holomorphic function on a disk at 0. By Cauchy theory for scalar-valued holomorphic functions,
\[
\frac{\lambda(f(z)(v))}{z} = \frac{1}{2\pi i} \int_\gamma \frac{1}{\zeta-z} \frac{\lambda(f(\zeta)(v))}{\zeta} d\zeta
\]
where $\gamma$ is a circle of small-enough radius $r$ centered at 0, and $|z| < r/2$. With $M_\lambda$ the sup of $|\lambda(f(\zeta)(v))|$ on $\gamma$ for fixed $v \in V$,
\[
\left| \frac{\lambda(f(z)(v))}{z} \right| \le \frac{\text{length}\,\gamma}{2\pi} \frac{1}{r-r/2} \frac{M_\lambda}{r} = \frac{2\pi r}{2\pi} \frac{1}{r/2} \frac{M_\lambda}{r} = \frac{M_\lambda}{r/2}.
\]
That is, $E = \{f(z)/z : |z| \le r/2\} \subset \Hom^{\circ}(V, W)$ is $\Hom_{{\rm wk}}^{\circ}(V, W)$-bounded. Thus, by the previous, $E$ is $\Hom_{{\rm str}}^{\circ}(V, W)$-bounded.

That is, given a balanced convex neighborhood $\Omega$ of 0 in $\Hom_{{\rm str}}^{\circ}(V, W)$, there is $t_0 > 0$ such that for complex $\beta$ with $|\beta| \ge t_0$, $E \subset \beta \Omega$. That is, for $|z| \le r/2$, $f(z)/z \in \beta \Omega$, so $f(z) \in z \beta \Omega$. Thus, for $|z| < \delta \le r/2$, $f(z) \in \delta \cdot \beta \Omega$. As $f(0)=0$, for $|z| < 1/\beta$ (and $|z| \le r/2$), $f(z) - f(0) \in \Omega$. This is $\Hom_{{\rm str}}^{\circ}(V, W)$-continuity.
\end{proof}

\subsection{Composition of weak-operator families}

Let $V, W, X$ be locally convex topological vector spaces.

\begin{theorem} \label{thm:composition-weak}
Let $s \mapsto S_s$ be $\Hom_{{\rm wk}}^{\circ}(V, W)$-holomorphic and $t \mapsto T_t$ be $\Hom_{{\rm wk}}^{\circ}(W, X)$-holomorphic, for $s, t$ both in a non-empty open $\Omega \subset \mathbb{C}$. Then the diagonal $z \mapsto T_z \circ S_z$ is $\Hom_{{\rm wk}}^{\circ}(V, X)$-holomorphic, for $z \in \Omega$.
\end{theorem}

\begin{proof}
First, claim that, for all $v \in V$ and for all $\xi \in X^*$, the two-variable function $(s, t) \mapsto \xi((T_t \circ S_s)(v))$ is separately weak-operator holomorphic, for all $v \in V$ and $\xi \in X^*$. Indeed, for fixed $s$, $t \mapsto \xi(T_t(S_s v))$ is scalar-holomorphic, by the weak-operator holomorphy of $T_t$. And, for fixed $t$, $\xi \circ T_t \in W^*$, so $s \mapsto (\xi \circ T_t)(S_s v)$ is scalar-holomorphic. This gives the separate one-variable weak-operator holomorphy assertions.

Then invoke Hartogs' theorem: separate analyticity (without any further hypotheses) of $\mathbb{C}$-valued functions implies joint analyticity. Restricting to the diagonal,
\[
z \mapsto \xi((T_z \circ S_z)(v))
\]
is certainly holomorphic, for all $v$ and $\xi$, giving weak holomorphy of $T_z \circ S_z$.
\end{proof}

\subsection{Strong-operator holomorphy of composite maps}

Let $V, W, X$ be locally convex topological vector spaces, with $V$ Hilbert, Banach, Fréchet, or LF, and $W$ quasi-complete.

\begin{theorem} \label{thm:composition-strong}
Let $s \mapsto S_s$ be $\Hom_{{\rm wk}}^{\circ}(V, W)$-holomorphic and $t \mapsto T_t$ be $\Hom_{{\rm wk}}^{\circ}(W, X)$-holomorphic, for $s, t$ both in a non-empty open $\Omega \subset \mathbb{C}$. Then the diagonal $z \mapsto T_z \circ S_z$ is $\Hom_{{\rm str}}^{\circ}(V, X)$-holomorphic, for $z \in \Omega$.
\end{theorem}

\begin{proof}
The previous theorem shows that $z \mapsto S_z \circ T_z$ is \emph{weak}-operator holomorphic. Since $V$ is LF and $X$ is quasi-complete, the composite is \emph{strong}-operator holomorphic, from above.
\end{proof}

\subsection{Vector-valued power series}

Power series with values in a quasi-complete, locally compact vectorspace $V$ behave essentially as well as scalar-valued ones. First,

\begin{lemma} \label{lem:vector-series-convergence}
Let $c_n$ be a bounded sequence of vectors in the locally convex, quasi-complete topological vector space $V$. Let $z_n$ be a sequence of complex numbers, let $0 \le r_n$ be real numbers such that $|z_n| \le r_n$, and suppose that $\sum_n r_n < +\infty$. Then $\sum_n c_n z_n$ converges in $V$. Further, given a convex balanced neighborhood $U$ of 0 in $V$ let $t$ be a positive real such that for all complex $w$ with $|w| \ge t$ we have $\{c_n\} \subset tU$. Then
\[
\sum_n c_n z_n \in \left( \sum_n |z_n| \right)\cdot tU \subset \left( \sum_n r_n \right) \cdot tU
\]
\end{lemma}

\begin{proof}
For a convex balanced neighborhood $N$ of 0 in the topological vector space, with complex numbers $z$ and $w$ such that $|z| \le |w|$, then $zN \subset wN$, since $|z/w| \le 1$ implies $(z/w)N \subset N$, or $zN \subset wN$. Further, for an absolutely convergent series $\sum_n a_n$ of complex numbers, for any $n_0$
\[
\sum_{n \le n_0} (\a_n \cdot V) = \sum_{n \le n_0} (|\a_n| \cdot V  )\subset \left( \sum_{n \le n_0} |\a_n| \right) \cdot N \subset \left( \sum_{n < \infty} |\a_n| \right) \cdot N
\]
For a balanced open $U$ containing 0, let $t$ be large enough such that for any complex $w$ with $|w| \ge t$ the sequence $c_n$ is contained in $wU$. The previous discussion shows that
\[
\sum_{m \le \ell \le n} c_\ell z_\ell \in (|z_m| + \dots + |z_n|) \cdot tU
\]
Given $\epsilon > 0$, invoking absolute convergence, take $m$ sufficiently large such that $|z_m| + \dots + |z_n| < t \cdot \epsilon$ for all $n \ge m$. Then
\[
\sum_{m \le \ell \le n} c_\ell z_\ell \in t \cdot (\epsilon/t) \cdot U =  U
\]
Thus, the original series is convergent. Since $X$ is quasi-complete the limit exists in $V$. The last containment assertion follows from this discussion, as well.
\end{proof}

\begin{corollary} \label{cor:vector-series-holomorphy}
Let $c_n$ be a bounded sequence of vectors in a locally convex quasi-complete topological vector space $V$. Then on $|z| < 1$ the series $f(z) = \sum_n c_n z^n$ converges and gives a \emph{holomorphic} $V$-valued function. That is, the function is infinitely-many-times complex-differentiable.
\end{corollary}

\begin{proof}
The lemma shows that the series expressing $f(z)$ and its apparent $k^{\text{th}}$ derivative $\sum_n c_n \binom{n}{k} z^{n-k}$ all converge for $|z| < 1$. The usual direct proof of Abel's theorem on the differentiability of (scalar-valued) power series can be adapted to prove the infinite differentiability of the $X$-valued function given by this power series, as follows. Let
\[
g(z) = \sum_{n \ge 0} n c_n z^{n-1}
\]
Then
\[
\frac{f(z) - f(w)}{z-w} - g(w) = \sum_{n \ge 1} c_n \left( \frac{z^n - w^n}{z-w} - n w^{n-1} \right)
\]
For $n=1$, the expression in the parentheses is 1. For $n>1$, it is
\begin{align*}
(z^{n-1} + z^{n-2}w + \dots + z w^{n-2} + w^{n-1}) - n w^{n-1} \\
= (z^{n-1} - w^{n-1}) + (z^{n-2}w - w^{n-1}) + \dots + (z^2 w^{n-3} - w^{n-1}) + (z w^{n-2} - w^{n-1}) + (w^{n-1} - w^{n-1}) \\
= (z-w) [ (z^{n-2} + \dots + w^{n-2}) + w(z^{n-3} + \dots + w^{n-3}) + \dots + w^{n-3}(z+w) + w^{n-2} + 0 ] \\
= (z-w) \sum_{k=0}^{n-2} (k+1) z^{n-2-k} w^k
\end{align*}
For $|z| \le r$ and $|w| \le r$ the latter expression is dominated by
\[
|z-w| \cdot r^{n-2} \frac{n(n-1)}{2} < |z-w| \cdot n^2 r^{n-2}
\]
Let $U$ be a balanced neighborhood of 0 in $X$, and $t$ a sufficiently large real number such that for all complex $w$ with $|w| \ge t$ all $c_n$ lie in $wU$. For $|z| \le r < 1$ and $|w| \le r < 1$, by the lemma,
\[
\frac{f(z) - f(w)}{z-w} - g(w) = (z-w) \sum_{n \ge 2} c_n \left( \sum_{k=0}^{n-2} (k+1) z^{n-2-k} w^k \right) \in (z-w) \cdot \left( \sum_n n^2 r^{n-2} \right) \cdot tU
\]
Thus, as $z \to w$, eventually $\frac{f(z)-f(w)}{z-w} - g(w)$ lies in $U$.
\end{proof}

\begin{corollary} \label{cor:banach-series-holomorphy}
Let $c_n$ be a sequence of vectors in a Banach space $X$ such that for some $r > 0$ the series $\sum |c_n| r^n$ converges in $X$. Then for $|z| < r$ the series $f(z) = \sum c_n z^n$ converges and gives a holomorphic (infinitely-many times complex-differentiable) $X$-valued function.
\end{corollary}
\begin{proof}
(Proof omitted, standard result).
\end{proof}




A {\it Gelfand-Pettis} or {\it weak} integral of a function $s\to
f(s)$ on a measure space $(X,\mu)$ with values in a topological
vector space $V$ is an element $I\in V$ so that for all $\lambda\in V^*$
$$\lambda(I) = \int_X \lambda(f(s)) d\mu(s).$$
A topological vector space is {\it quasi-complete} when every {\it
bounded} (in the topological vector space sense, not necessarily the
metric sense) Cauchy {\it net} is convergent.

\begin{theorem} Continuous compactly-supported functions $f:X\to V$ with values
in {\it quasi-complete} (locally convex) topological vector spaces $V$
have Gelfand-Pettis integrals with respect to finite positive regular
Borel measures $\mu$ on compact spaces $X$, and these integrals are
{\it unique}. In particular, for a $\mu$ with total measure
$\mu(X)=1$, the integral $\int_X\,f(x)\,d\mu(s)$ lies in the closure
of the convex hull of the image $f(X)$ of the measure space $X$.
\end{theorem}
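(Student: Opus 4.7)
First I would dispose of uniqueness: since $V$ is locally convex and Hausdorff, the continuous dual $V^*$ separates points, so if $I_1$ and $I_2$ both satisfy $\lambda(I_j) = \int_X \lambda(f)\, d\mu$ for every $\lambda \in V^*$, then $\lambda(I_1 - I_2) = 0$ for all $\lambda$, forcing $I_1 = I_2$. For existence, rescaling reduces to the case $\mu(X) = 1$, i.e.\ $\mu$ a probability measure; the general case follows by multiplying by $\mu(X)$. Let $K = f(X)$, which is compact in $V$ as the continuous image of a compact space, and let $C = \overline{\mathrm{co}}(K)$ denote its closed convex hull. The crucial input is that in a quasi-complete locally convex space, the closed convex hull of a compact set is compact; this is the only place quasi-completeness is used.

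With $C$ compact in hand, I would construct the integral by a finite-intersection argument on $C$. For each finite subset $F = \{\lambda_1, \dots, \lambda_n\} \subset V^*$, set
$$C_F = \bigl\{v \in C : \lambda_i(v) = \textstyle\int_X \lambda_i(f(x))\, d\mu(x),\ i=1,\dots,n\bigr\}.$$
To see $C_F \neq \varnothing$, consider the continuous linear map $\Lambda = (\lambda_1, \dots, \lambda_n) : V \to \mathbb{C}^n$. Then $\Lambda(C)$ is a compact convex subset of $\mathbb{C}^n$ containing $\Lambda(K)$. Because $\mu$ is a probability measure, the ordinary finite-dimensional integral $\int_X \Lambda(f(x))\, d\mu(x)$ is a uniform limit of Riemann-type convex combinations of points of $\Lambda(K)$ (using uniform continuity of $\Lambda\circ f$ on the compact $X$), hence lies in $\Lambda(C)$. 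So some $v \in C$ has $\Lambda(v)$ equal to that vector, giving $C_F \neq \varnothing$. Each $C_F$ is closed in $C$, and the family $\{C_F\}_F$ has the finite intersection property, since $C_F \cap C_{F'} \supset C_{F \cup F'}$; by compactness of $C$, the intersection $\bigcap_F C_F$ is nonempty, and any element $I$ of it satisfies $\lambda(I) = \int_X \lambda(f)\, d\mu$ for all $\lambda \in V^*$. The containment of $I$ in $\overline{\mathrm{co}}(f(X))$ is then automatic from the construction.

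The main obstacle is the compactness of $\overline{\mathrm{co}}(K)$: this is a classical but non-trivial theorem (essentially due to Mackey). The standard route is to show that $\mathrm{co}(K)$ is precompact for any compact $K$ in a locally convex space by covering $K$ by finitely many translates of a balanced convex neighborhood of $0$, checking that the same finite convex hulls (now of translate-centers) cover $\mathrm{co}(K)$ up to the neighborhood, and then invoking quasi-completeness to pass from precompactness of $\mathrm{co}(K)$ to compactness of its closure. Once this tool is granted, the finite-intersection assembly and the elementary finite-dimensional fact that integrals of continuous functions against probability measures lie in the closed convex hull of the range combine cleanly to yield both the existence and the "in particular" assertion.
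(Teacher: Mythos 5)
Your proof is correct, and it is genuinely more than what the paper provides: the paper's ``proof'' of this theorem consists solely of a citation to Bourbaki's \emph{Int\'egration}, whereas you supply the complete standard argument (essentially the one in Rudin's \emph{Functional Analysis}, Thm.~3.27, and in substance what the Bourbaki reference contains). Your structure is sound: uniqueness from the fact that $V^*$ separates points (Hahn--Banach, using local convexity and the implicit Hausdorff hypothesis); existence by reducing to a probability measure, invoking the Krein--Mackey fact that the closed convex hull $C=\overline{\mathrm{co}}(f(X))$ of a compact set is compact in a quasi-complete locally convex space --- and you correctly isolate this as the sole use of quasi-completeness, with the right sketch (convex hulls of precompact sets are precompact; quasi-completeness upgrades the closure to compact); and then the finite-intersection argument over finite subsets $F\subset V^*$, where nonemptiness of each $C_F$ follows because the finite-dimensional integral of $\Lambda\circ f$ against a probability measure lies in the compact convex set $\Lambda(C)$ (your Riemann-sum approximation works since $X$ is compact Hausdorff and $\mu$ is regular; alternatively one can argue by a separating functional in $\mathbb{C}^n$). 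The ``in particular'' clause is automatic since the solution is produced inside $C$. What the paper's approach buys is brevity by outsourcing a classical fact; what yours buys is a self-contained proof that makes explicit exactly where quasi-completeness and local convexity enter, which is useful given that the appendix leans on this theorem for the weak-to-strong holomorphy arguments.
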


\begin{proof} Bourbaki's {\it Integration}. (Thanks to Jacquet for bringing
this reference to my attention.) 
\end{proof}
\begin{corollary} Let $T:V\to W$ be a continuous linear map, and let $f:X\to V$
be a continuous compactly supported $V$-valued function on a
topological measure space $X$ with finite positive Borel measure
$\mu$. Suppose that $V$ is locally convex and quasi-complete, so that
(from above) a Gelfand-Pettis integral of $f$ exists and is
unique. Suppose that $W$ is locally convex. Then
$$ T\left( \int_X \, f(x) \, d\mu(x) \right) \;\;=\;\; \int_X \, Tf(x) \,
d\mu(x). $$
In particular, $ T\left( \int_X \, f(x) \, d\mu(x) \right)$ is a
Gelfand-Pettis integral of $T\circ f$.
\end{corollary}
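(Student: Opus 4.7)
The plan is to exploit the defining property of Gelfand--Pettis integrals, namely that they are characterized by their pairings with continuous linear functionals, together with the elementary observation that continuous linear maps pull back continuous functionals.

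First, set $I := \int_X f(x)\,d\mu(x) \in V$, which exists and is unique by the preceding theorem (using that $V$ is quasi-complete and locally convex, $f$ is continuous and compactly supported, and $\mu$ is a finite positive regular Borel measure). For any continuous linear functional $\lambda \in W^*$, the composition $\lambda \circ T : V \to \mathbb{C}$ is a continuous linear functional on $V$, i.e.\ $\lambda \circ T \in V^*$. Applying the defining property of $I$ to this functional yields
$$ \lambda\bigl(T(I)\bigr) \;=\; (\lambda \circ T)(I) \;=\; \int_X (\lambda \circ T)(f(x))\,d\mu(x) \;=\; \int_X \lambda(Tf(x))\,d\mu(x). $$
Since this identity holds for every $\lambda \in W^*$, the vector $T(I) \in W$ satisfies precisely the defining property of a Gelfand--Pettis integral of the continuous compactly supported $W$-valued function $T \circ f$.

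Second, I would invoke uniqueness to turn this into the displayed equality. Since $W$ is locally convex, the Hahn--Banach theorem ensures that $W^*$ separates points of $W$, so any two vectors with identical pairings against all of $W^*$ must coincide. Consequently $T(I)$ is the unique Gelfand--Pettis integral $\int_X Tf(x)\,d\mu(x)$, and the formula
$T\bigl(\int_X f\,d\mu\bigr) = \int_X Tf\,d\mu$ follows.

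The only subtlety worth flagging is that $W$ is \emph{not} assumed quasi-complete, so the existence theorem quoted just above the corollary does not directly produce $\int_X Tf\,d\mu \in W$ a priori. This is not an obstacle: existence of the right-hand side is established constructively by the argument itself, which exhibits $T(I)$ as such an integral. This is the content of the ``in particular'' clause in the statement, and indeed I would phrase the final proof in that order---first assert that $T(I)$ is a Gelfand--Pettis integral of $T \circ f$, then deduce the displayed equation as a consequence of uniqueness in the locally convex target.
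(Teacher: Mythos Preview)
Your proof is correct and follows essentially the same approach as the paper: pull back functionals $\lambda\in W^*$ via $T$ to functionals in $V^*$, apply the defining property of the Gelfand--Pettis integral in $V$, conclude that $T(I)$ satisfies the defining property for $\int_X Tf\,d\mu$, and invoke local convexity of $W$ for uniqueness. Your explicit remark that quasi-completeness of $W$ is not needed because $T(I)$ itself furnishes the integral is a nice clarification the paper leaves implicit.
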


\begin{proof} First, the integral exists in $V$, from above. Second, for any
continuous linear functional $\lambda$ on $W$, $\lambda\circ T$ is a
continuous linear functional on $V$. Thus, by the defining property of
the Gelfand-Pettis integral, for every such $\lambda$
$$
(\lambda\circ T)\left( \int_X \, f(x) \, d\mu(x) \right) 
\;=\; \int_X \, (\lambda T f)(x) \, d\mu(x).
$$ 
This exactly asserts that $ T\left( \int_X \, f(x) \, d\mu(x) \right)$
is a Gelfand-Pettis integral of the $W$-valued function $T\circ f$.
Since the two vectors $ T\left( \int_X \, f(x) \, d\mu(x) \right)$ and
$\int_X \, Tf(x) \, d\mu(x)$ give identical values under all
$\lambda\in W^*$, and since $W$ is locally convex, these two vectors
are equal, as claimed. 
\end{proof}

\begin{corollary}\label{weaktostrong} For quasi-complete and locally convex $V$, weakly holomorphic
$V$-valued functions are (strongly) holomorphic.
\end{corollary}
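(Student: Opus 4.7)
The strategy is to prove Cauchy's integral formula for $f$ using the Gelfand--Pettis machinery set up in the appendix, and then deduce strong differentiability by a standard resolvent--identity argument. Fix $s_0$ in the domain $\Omega\subset\C$ and choose a small closed disk at $s_0$ contained in $\Omega$, with positively oriented boundary $\gamma$.

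The first step is to promote weak holomorphy to continuity and local boundedness of $f$. For every $\lambda\in V^*$, $\lambda\circ f$ is holomorphic on $\Omega$, hence continuous, so $\sup_{s\in K}|\lambda(f(s))|<\infty$ for every compact $K\subset\Omega$. Thus $f(K)$ is weakly bounded, and in a locally convex (Hausdorff) space weakly bounded sets are strongly bounded (Mackey's theorem). Applying Cauchy estimates to the scalar functions $\lambda\circ f$ on small disks gives a uniform weak bound on the difference quotients $(f(s+h)-f(s))/h$; by the uniform boundedness principle (available because $V$ is quasi-complete), this yields continuity of $f$ on $\Omega$.

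With continuity in hand, the compactness of $\gamma$ together with quasi-completeness and local convexity of $V$ guarantees (by the theorem recalled in the appendix) the existence of the Gelfand--Pettis integrals
\[ g(s)\;=\;\frac{1}{2\pi i}\oint_\gamma \frac{f(z)}{z-s}\,dz, \qquad h(s)\;=\;\frac{1}{2\pi i}\oint_\gamma \frac{f(z)}{(z-s)^2}\,dz \]
for $s$ inside $\gamma$. By the corollary stating that continuous linear maps commute with Gelfand--Pettis integrals, applying any $\lambda\in V^*$ to $g(s)$ yields $\frac{1}{2\pi i}\oint_\gamma \frac{\lambda(f(z))}{z-s}dz$, which equals $\lambda(f(s))$ by the classical Cauchy formula for the $\C$-valued holomorphic function $\lambda\circ f$. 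Hahn--Banach (valid in a locally convex Hausdorff space) then gives $g(s)=f(s)$. To obtain strong differentiability, use the resolvent identity to write
\[ \frac{f(s+\zeta)-f(s)}{\zeta}-h(s)\;=\;\frac{\zeta}{2\pi i}\oint_\gamma \frac{f(z)}{(z-s-\zeta)(z-s)^2}\,dz, \]
again a Gelfand--Pettis integral. Since $f$ is continuous, hence bounded, on the compact contour $\gamma$, and the scalar factor on the right is $O(\zeta)$ uniformly in $z\in\gamma$ as $\zeta\to 0$, the right-hand side lies in a shrinking multiple of a fixed bounded convex hull and therefore tends to $0$ in every continuous seminorm on $V$. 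Thus $f'(s)=h(s)$ exists as a strong limit, proving $f$ is strongly holomorphic.

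\textbf{Main obstacle.} The delicate step is not the Cauchy--formula manipulation, but the passage from weak holomorphy to continuity of $f$: this is where quasi-completeness and local convexity enter essentially, via Mackey's theorem and the uniform boundedness principle. Once continuity is secured, the Gelfand--Pettis framework developed earlier in the appendix delivers Cauchy's formula, and strong differentiability follows formally from the resolvent identity.
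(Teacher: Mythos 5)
Your proof is correct, and its core device --- forming the Cauchy integral as a Gelfand--Pettis integral over the compact contour, which exists precisely because $V$ is quasi-complete and locally convex, and then pushing continuous functionals through it --- is the same as the paper's. You differ from the paper in two ways. First, you explicitly supply the preliminary step that $f$ is continuous (weak boundedness on compacts plus Mackey's theorem applied to the Cauchy-estimated difference quotients); the paper's one-line proof simply asserts that the Cauchy integrals ``involve continuous integrals on compacts,'' which tacitly presupposes this continuity, so you are filling a real gap rather than duplicating the argument. One small correction there: the second boundedness claim, for the family of difference quotients, again follows from Mackey's theorem (weakly bounded implies bounded in a locally convex space); it is not an instance of the uniform boundedness principle, and quasi-completeness is not what makes it work --- quasi-completeness is needed only for the existence of the Gelfand--Pettis integrals. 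Second, you conclude strong differentiability directly from the resolvent identity $\frac{1}{\zeta}\bigl(\frac{1}{z-s-\zeta}-\frac{1}{z-s}\bigr)-\frac{1}{(z-s)^2}=\frac{\zeta}{(z-s-\zeta)(z-s)^2}$ together with boundedness of $f$ on the contour, whereas the paper extracts a $V$-valued power series expansion and invokes an extension of Abel's theorem. The two finishes are equivalent in substance; yours avoids discussing convergence of vector-valued power series, while the paper's yields analyticity (in the power-series sense) as a byproduct.
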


\begin{proof} The Cauchy integral formulas involve continuous integrals on
compacts, so these integrals exist as Gelfand-Pettis integrals. Thus,
we can obtain $V$-valued convergent power series expansions for weakly
holomorphic $V$-valued functions, from which (strong) holomorphy
follows by an obvious extension of Abel's theorem that analytic
functions are holomorphic. \end{proof}

Give the space $\Hom^o(X,Y)$ of continuous mappings $T:X\to Y$
from an LF-space $X$ (strict colimit of Fr\'echet spaces, e.g., a
Fr\'echet space) to a quasi-complete space $Y$ the {\it weak operator}
topology as follows. For $x\in X$ and $\mu\in Y^*$, define a seminorm
$p_{x,\mu}$ on $\Hom^o(X,Y)$ by
$$
p_{x,\mu}(T) \;=\; |\mu(T(x))|.
$$

\begin{corollary} With the weak topoloogy $\Hom^o(X,Y)$ is quasi-complete.
\end{corollary}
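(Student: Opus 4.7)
The plan is to take a bounded Cauchy net $\{T_\alpha\}$ in $\Hom^o(X,Y)$ for the weak operator topology and construct a continuous linear limit $T\in\Hom^o(X,Y)$ to which it converges. The proof breaks into three steps: extracting a pointwise weak limit, proving the limit is continuous via Banach--Steinhaus, and confirming convergence in the ambient topology.

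First, I would extract a candidate limit $T$ pointwise. The definitions of boundedness and Cauchy-ness with respect to the seminorms $p_{x,\mu}(T)=|\mu(T(x))|$ translate directly into the statement that, for each $x\in X$ and each $\mu\in Y^*$, the scalar net $\mu(T_\alpha x)$ is both bounded and Cauchy in $\C$. Fixing $x\in X$, this says $\{T_\alpha x\}_\alpha$ is a $\sigma(Y,Y^*)$-Cauchy net in $Y$ which is weakly bounded; by Mackey's theorem it is then bounded in $Y$. Because $Y$ is quasi-complete, this bounded weakly Cauchy net converges weakly to an element of $Y$, which I define to be $Tx$.

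Second, I would verify that $T\colon X\to Y$ is a continuous linear map. Linearity of $T$ is inherited from the $T_\alpha$ together with continuity of addition and scalar multiplication on $Y$. For continuity, the key input is the Banach--Steinhaus theorem: since $X$ is an LF-space it is barreled, and the pointwise boundedness of $\{T_\alpha\}$ in $Y$ (established above) then forces $\{T_\alpha\}$ to be equicontinuous. Any pointwise limit of an equicontinuous family of linear maps is itself continuous, so $T\in\Hom^o(X,Y)$. Finally, convergence $T_\alpha\to T$ in the weak operator topology is immediate from the construction: for every seminorm $p_{x,\mu}$ one has $p_{x,\mu}(T_\alpha-T)=|\mu(T_\alpha x-Tx)|\to 0$ by definition of $Tx$.

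The main obstacle is the first step, namely identifying the weak limit of the bounded Cauchy net $\{T_\alpha x\}_\alpha$ as an element of $Y$ itself. In a general quasi-complete locally convex space $Y$ this is delicate: the weakly Cauchy condition only produces \emph{a priori} a linear functional $\phi_x\colon\mu\mapsto \lim_\alpha\mu(T_\alpha x)$ on $Y^*$, and realizing $\phi_x$ as evaluation at some $Tx\in Y$ uses that the relevant boundedness makes $\phi_x$ continuous on equicontinuous subsets of $Y^*$, so by Grothendieck's completeness theorem it comes from an element of the quasi-completion of $Y$ for the weak topology, which equals $Y$ in the bounded-set sense by hypothesis. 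In the application at hand, $Y$ is a Hilbert space $L^2(\Gamma\bsl X,\ell)$, so this step reduces at once to the Riesz representation theorem and no abstract machinery is needed.
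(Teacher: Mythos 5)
Your overall strategy---take a bounded Cauchy net for the seminorms $p_{x,\mu}$, extract pointwise limits, use Banach--Steinhaus via barrelledness of the LF-space $X$ to get continuity of the limit, then verify convergence---is the natural way to flesh out the paper's two-line argument (the paper only identifies the dual of $\Hom^o(X,Y)$ with the span of the functionals $T\mapsto\mu(Tx)$ and says ``invoke the previous result''), and your second and third steps are fine. The genuine gap is in the first step, exactly where you sense the delicacy: quasi-completeness of $Y$ is a property of its \emph{original} topology and does not imply that a bounded $\sigma(Y,Y^*)$-Cauchy net converges weakly in $Y$; that is weak quasi-completeness, a strictly stronger property. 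For instance $Y=c_0$ is a Banach space, hence quasi-complete, yet the bounded weakly Cauchy sequence of partial sums $\sum_{k\le n}e_k$ has no weak limit in $c_0$. Your proposed repair via Grothendieck's completeness theorem assumes precisely what is missing: the limit functional $\phi_x$ on $Y^*$ need not be $\sigma(Y^*,Y)$-continuous on equicontinuous subsets of $Y^*$ (in the $c_0$ example it is $(1,1,1,\dots)\in\ell^\infty\setminus c_0$, and it fails to be weak-$\ast$ continuous on the unit ball of $\ell^1$). Taking $X=\C$ shows the difficulty is intrinsic to the statement at this generality: $\Hom^o(\C,Y)$ with the weak operator topology is just $Y$ with its weak topology, so the assertion amounts to weak quasi-completeness of $Y$, which quasi-completeness alone does not give.

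What rescues the situation in this paper is that the targets actually used (Hilbert spaces such as $L^2(\Gamma\bsl X,\ell)$, and more generally semi-reflexive spaces) have bounded sets relatively weakly compact, so a bounded weakly Cauchy net has a weakly convergent subnet and hence converges; your closing remark about the Riesz representation theorem is the right observation, but it must be promoted from an aside into a hypothesis (assume $Y$ semi-reflexive or weakly quasi-complete, or argue directly for the Hilbert-space targets appearing in the body of the paper). As written, the inference ``$Y$ quasi-complete $\Rightarrow$ the weak limit $Tx$ exists in $Y$'' is a real gap, and it cannot be closed at the stated level of generality.
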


\begin{proof} The collection of finite linear combinations of the functionals $T
\to \mu(Tx) $ is exactly the dual space of $\Hom^o(X,Y)$ with the weak
operator topology. Invoke the previous result. 
\end{proof}

\begin{corollary} A weakly holomorphic $\Hom^o(X,Y)$-valued function $T_s$ is
holomorphic when $\Hom^o(X,Y)$ is given the weak operator
topology. \end{corollary}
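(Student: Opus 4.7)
The plan is to apply the earlier corollary stating that weakly holomorphic functions valued in a quasi-complete locally convex space are (strongly) holomorphic, taking $V = \Hom^o(X,Y)$ equipped with the weak operator topology. The two hypotheses needed — quasi-completeness and local convexity — are both available: quasi-completeness was just established in the previous corollary, and local convexity is automatic from the fact that the weak operator topology is defined by the family of seminorms $p_{x,\mu}(T) = |\mu(T(x))|$ with $x \in X$, $\mu \in Y^*$, and any topology generated by a family of seminorms is locally convex.

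The one point that requires a sentence of unpacking is the identification of the hypothesis. Saying that $s \mapsto T_s$ is weakly holomorphic as a $\Hom^o(X,Y)$-valued function means, by definition, that $s \mapsto \lambda(T_s)$ is a holomorphic $\mathbb{C}$-valued function for every $\lambda$ in the continuous dual of $\Hom^o(X,Y)$ (with the weak operator topology). As was noted in the proof of the preceding corollary, that dual is spanned by the evaluation functionals $T \mapsto \mu(Tx)$ for $x \in X$ and $\mu \in Y^*$. Thus the hypothesis amounts exactly to the assertion that every scalar function $s \mapsto \mu(T_s x)$ is holomorphic, which matches the usage of ``weakly holomorphic'' in the statement to be proved.

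With these two observations in place, I would write: the topological vector space $\Hom^o(X,Y)$, with the weak operator topology, is locally convex (because its topology is defined by seminorms) and quasi-complete (by the previous corollary). By the earlier corollary on weak-to-strong holomorphy, any weakly holomorphic $\Hom^o(X,Y)$-valued function is (strongly) holomorphic, which is precisely the claim.

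I do not expect any real obstacle here — the statement is essentially a combination of two preceding results, and the proof is a one-line application. The only item to check carefully is that ``weakly holomorphic'' in the hypothesis refers to the dual of $\Hom^o(X,Y)$ with the weak operator topology (rather than, say, some finer topology), so that the preceding corollary applies without modification. Once this is confirmed, the result follows immediately.
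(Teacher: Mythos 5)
Your proposal is correct and matches the paper's intent exactly: the paper gives no argument beyond $\square$, treating the corollary as an immediate combination of the quasi-completeness of $\Hom^o(X,Y)$ in the weak operator topology with the earlier weak-to-strong holomorphy corollary, which is precisely what you do. Your extra sentence identifying the dual of $\Hom^o(X,Y)$ with the span of the evaluation functionals $T\mapsto\mu(Tx)$ is a worthwhile clarification that the paper leaves implicit.
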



\subsection{A continuation principle}

Let $V$ be a topological vector space. Following \cite{BL}, a {\it
system of linear equations} $X_0$ {\it in} $V$ is a collection 
$$
X_0 \;=\; \{(W_i, T_i, w_i): i\in I \}
$$
where $I$ is a (not necessarily countable) set of indices, each $W_i$
is a topological vector space, 
$$
T_i:V\to W_i
$$
is a continuous linear map for each index $i$, and $w_i\in W_i$ are
the {\it targets}. A {\it solution} of the system $X_0$ is $v\in V$ such that
$T_i(v)=w_i$ for all indices $i$. Denote the set of solutions by $\Sol
X_0$.

When the systems of linear equations $X_s=\{W_i, T_{i,s}, w_{i,s}\}$
depend on a parameter $s$, with $T_{i,s}$ and $w_{i,s}$ weakly
holomorphic in $s$, say that the {\it parametrized linear system}
$X=\{X_s:s\in S\}$ is {\it holomorphic} in $s$. Note that $\{W_i\}$
does not depend upon $s$.

For $X=\{X_s\}$ a parametrized system of linear equations in a space
$V$, holomorphic in $s$, suppose there is a finite-dimensional space
$F$, a weakly holomorphic family $\{f_s\}$ of continuous linear maps
$f_s:F\to V$ such that, for each $s$, $\im f_s \supset \Sol X_s$. Then we say that $f_s$ is a
{\it finite holomorphic envelope} for the system $X$ or that $X$ is of
{\it finite type}.

For $U_\alpha, \alpha\in A$ an open cover of the parameter space, and
for each $\alpha\in A$ $\{f_s^{(\alpha)}:s\in U_\alpha\}$ is a finite
envelope for the system $X^{(\alpha)}=\{X_s: s\in U_\alpha\}$, say
that $\{f_s^{(\alpha)}:s\in U_\alpha, \alpha\in A\}$ is a {\it locally
finite holomorphic envelope} of $X$.

\begin{theorem}\label{bernstein} ({\it Bernstein}) {\it Continuation Principle:} Let $X=\{X_s:s\}$
be a {\it locally finite} system of linear equations
$$T_{i,s} : V \to W_i $$
for $s$ varying in a connected complex manifold, with each $W_i$
(locally convex and) {\it quasi-complete}. Then the {\it
continuation principle} holds. That is, if for $s$ in some non-empty
open subset there is a unique solution $v_s$, then this
solution depends meromorphically upon $s$, has a
meromorphic continuation to $s$ in the whole manifold, and for fixed
$s$ off a locally finite set of analytic hypersurfaces inside the complex
manifold, the solution $v_s$ is the {\it unique} solution to the system $X_s$.
\end{theorem}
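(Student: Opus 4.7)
The plan is to reduce everything to a finite-dimensional linear-algebra problem via the envelope, solve it meromorphically by Cramer's rule, and then patch local data using the uniqueness hypothesis. First I would work locally: fix any $s_0$ in the complex manifold and, using the local finiteness hypothesis, choose a neighborhood $U\ni s_0$, a finite-dimensional $F$, and a weakly (hence strongly, by the quasi-completeness of $V$ and the Hartogs-type results in the appendix) holomorphic family $f_s:F\to V$ with $\im f_s\supset \Sol X_s$ for $s\in U$. Pulling the system back to $F$, define for each $i$ the weakly holomorphic family of continuous linear maps $T'_{i,s}:=T_{i,s}\circ f_s:F\to W_i$ and targets $w_{i,s}\in W_i$. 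Any solution $v_s$ of $X_s$ is of the form $v_s=f_s(\xi_s)$ for some $\xi_s\in F$ with $T'_{i,s}(\xi_s)=w_{i,s}$ for all $i$, and conversely any such $\xi_s$ yields a solution.

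Next I would convert the infinite parametrized system $\{T'_{i,s}\xi=w_{i,s}\}_{i\in I}$ into a finite one. Because $\dim F<\infty$, the kernels $\ker T'_{i,s}$ form a family of subspaces of $F$ whose intersection is governed by finitely many of the $T'_{i,s}$ at any given point; using the uniqueness hypothesis on the nonempty open set where $v_s$ is unique, there is an open subset $U'\subset U$ and finitely many indices $i_1,\dots,i_N$ such that the map $\Phi_s:F\to W_{i_1}\oplus\cdots\oplus W_{i_N}$ given by $\Phi_s(\xi)=(T'_{i_1,s}\xi,\dots,T'_{i_N,s}\xi)$ is injective on $U'$. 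Choose bases so that $\Phi_s$ is represented by a matrix of holomorphic functions of $s$, and the right-hand side $(w_{i_1,s},\dots,w_{i_N,s})$ (after projection to a $\dim F$-dimensional subspace carrying the image) is a holomorphic vector. The unique $\xi_s$ satisfying $\Phi_s\xi_s=(w_{i_j,s})_j$ on $U'$ is then given by Cramer's rule, so it extends meromorphically to all of $U$; the corresponding $v_s:=f_s(\xi_s)$ is a meromorphic $V$-valued function on $U$ that agrees with the given solution on $U'$. A removable-singularity argument together with the remaining equations $T'_{i,s}\xi_s=w_{i,s}$ (which hold on $U'$ and are holomorphic, hence hold wherever $\xi_s$ is holomorphic) shows that $v_s$ continues to satisfy $X_s$ off the zero divisor of the chosen determinant.

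Then I would globalize. Cover the complex manifold by open sets $U^{(\alpha)}$ on each of which the construction above produces a meromorphic solution $v_s^{(\alpha)}$. On overlaps $U^{(\alpha)}\cap U^{(\beta)}$, both $v_s^{(\alpha)}$ and $v_s^{(\beta)}$ are meromorphic solutions of $X_s$, and they agree on the connected nonempty open set (propagated by connectedness and the original uniqueness hypothesis) where both are holomorphic and the system has a unique solution. By the identity principle applied to the $\C$-valued functions $s\mapsto \lambda(v_s^{(\alpha)}-v_s^{(\beta)})$ for $\lambda\in V^*$, the two agree as meromorphic $V$-valued functions. Hence they patch to a global meromorphic $V$-valued $v_s$. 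The locally finite set of analytic hypersurfaces outside which $v_s$ is the unique solution is obtained by taking the union of the zero loci of the finitely many determinants on each chart; by shrinking and re-covering, these form a locally finite collection in the manifold.

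The main obstacle is the second step, namely extracting a \emph{finite} generically-injective subsystem $\Phi_s$ from the possibly uncountable family $\{T'_{i,s}\}$ and certifying that the remaining infinitely many equations $T'_{i,s}\xi_s=w_{i,s}$ propagate as identities of meromorphic $W_i$-valued functions; this uses the joint-analyticity corollary to Hartogs in the appendix to pass from weak to strong holomorphy of the composite $T_{i,s}\circ f_s\circ (\text{Cramer inverse})$, together with quasi-completeness of each $W_i$ to ensure that the pointwise equality on the open set $U'$ forces equality everywhere the right-hand side is holomorphic. Once this is in hand, the patching and the description of the singular locus are routine. This proves the Continuation Principle and, applied to the system $X_s$ of Theorems~\ref{uniqueness} and~\ref{finiteness}, yields the meromorphic continuation of $E_{s}$ claimed in the corollary.
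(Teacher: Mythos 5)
Your overall architecture --- pull the system back along the finite holomorphic envelope $f_s:F\to V$, solve a finite subsystem by Cramer's rule, propagate the remaining equations by the identity principle after composing with separating functionals, and patch over a cover of the connected manifold --- is the same as that of the proof in the appendix, which likewise reduces everything to a holomorphically parametrized version of Cramer's rule. However, your second step contains a claim that is false in general: you assert that uniqueness of the solution $v_s\in V$ yields finitely many indices $i_1,\dots,i_N$ for which $\Phi_s=(T'_{i_1,s},\dots,T'_{i_N,s}):F\to W_{i_1}\oplus\cdots\oplus W_{i_N}$ is \emph{injective}. Since $T'_{i,s}=T_{i,s}\circ f_s$, every $\ker T'_{i,s}$ contains $\ker f_s$, so no subfamily (finite or not) can be injective unless $f_s$ itself is injective --- and the definition of a finite holomorphic envelope only requires $\im f_s \supset \Sol X_s$, not injectivity. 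Uniqueness of $v_s$ in $V$ only says that the preimage set $K^+_s=\{\xi\in F: f_s(\xi)\in \Sol X_s\}$ is a single coset of $\ker f_s$, which may be positive-dimensional (take $F=\C^2$, $V=\C$, $f_s(x,y)=x$, and the single equation $v=0$). Your Cramer step, which needs a uniquely determined $\xi_s$, therefore does not go through as written.

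The appendix avoids this by never asking for injectivity: it describes $\ker f_s$ and $K^+_s$ by (possibly uncountable) scalar systems with holomorphically varying coefficient matrices $M_s$, $N_s$, $Q_s$, observes that uniqueness in $V$ is exactly the rank condition $\rank M_s=\rank N_s=\rank Q_s$, and then applies Cramer's rule to one $r\times r$ minor of $N_s$ of maximal rank $r$, setting the remaining coordinates of $F$ to zero. This produces \emph{one} meromorphic point of $K^+_s$ (not a unique one), whose image under $f_s$ is the unique solution in $V$; the equality $\rank N_s=\rank Q_s=r$ then forces the remaining equations to hold automatically, rather than by a separate identity-principle argument. Your proof can be repaired along exactly these lines --- replace ``injective'' by ``has kernel equal to $\ker f_s$ at generic $s$'' and select the solved-for coordinates via a maximal-rank minor --- but as stated the injectivity claim is a genuine gap. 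The remaining ingredients of your write-up (weak-to-strong holomorphy via Hartogs and quasi-completeness, propagation of the untouched equations after composing with functionals on the locally convex $W_i$, patching on overlaps of a connected manifold, and the description of the exceptional locus as a locally finite union of determinant zero-loci) are correct and consistent with the appendix.
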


\begin{proof} This reduces to a holomorphically parametrized version of Cramer's
rule, in light of comments above about weak-to-strong principles and
composition of weakly holomorphic maps. 

It is sufficient to check the continuation principle locally, so
let $f_s:F\to V$ be a family of morphisms so that $\im f_s \supset
\Sol X_s$, with $F$ finite-dimensional. We can reformulate the statement in
terms of the finite-dimensional space $F$. Namely, put 
$$K^+_s = \{v\in F : f_s(v) \in \Sol X_s\}
= \{\hbox{ inverse images in $F$ of solutions }\}.$$ 
(The set $K^+_s$ is an affine
subspace of $F$.) By elementary finite-dimensional linear algebra,
$X_s$ has a unique solution if and only if 
$$ \dim K^+_s \;=\; \dim \ker f_s.$$
The weak holomorphy of $T_{i,s}$ and $f_s$ yield the weak
holomorphy of the composite $T_{i,s}\circ f_s$ from the
finite-dimensional space $F$ to $W_i$, by the corollary of Hartogs'
theorem above. The finite-dimensional space $F$ is certainly LF,
and $W_i$ is quasi-complete, so by invocation of results above
on weak holomorphy the space $\Hom^o(F,W_i)$ is quasi-complete,
and a weakly holomorphic family in $\Hom^o(F,W_i)$ is in fact
holomorphic. 

Take $F=\C^n$. Using linear functionals on $V$ and $W_i$ which
separate points we can describe $\ker f_s$ and $K^+_s$ by systems of
linear equations of the forms 
$$\ker f_s = \{(x_1,\ldots,x_n) \in F: \sum_j \,
a_{\alpha\, j}\,x_j=0, \;\; \alpha \in A \},$$
$$ K^+_s = \{\hbox{ inverse images of solutions }\} = \{ (x_1,\ldots,x_n)\in F:
\sum_j \, b_{\beta\,j}\,x_j = c_{\beta}, 
 \;\;\beta \in B\},$$
where 
$a_{\alpha\,j}$, $b_{\beta\,j}$, $c_{\beta}$ all depend implicitly
upon $s$, and are holomorphic $\C$-valued functions of $s$. (The index
sets $A,B$ may be of arbitrary cardinality.) Arrange these
coefficients into matrices $M_s$, $N_s$, $Q_s$ holomorphically
parametrized by $s$, with entries 
$$M_s(\alpha,j) = a_{\alpha\,j},
\;\;\;\;\; 
N_s(\beta,j) = b_{\beta\,j}
\;\;\;\;\;
\textrm{and}
\quad
Q_s(\beta,j) = 
\left\{
\begin{matrix}
b_{\beta\,j} & \hbox{ for $1\le j \le n$} \cr
c_{\beta}  & \hbox{ for $ j=n $} \cr
\end{matrix}
\right.$$
of sizes
$\card(A)$-by-$n$, 
$\card(B)$-by-$n$ and
$\card(B)$-by-$(n+1)$ respectively. 
We have
$$ \dim \ker f_s = n - \rank M_s. $$
Certainly for all $s$
$$\rank N_s \le  \rank Q_s, $$
and if the inequality is {\it strict} then there is {\it no solution}
to the system $X_s$.
By finite-dimensional linear algebra, assuming that $\rank N_s = \rank
Q_s$,
$$ \dim K^+_s = n - \rank B_s.$$
Therefore, the condition that $\dim K^+_s = \dim \ker f_s$ can be
rewritten as
$$\rank M_s = \rank N_s = \rank Q_s.$$

Let $S_o$ be the dense subset (actually, $S_o$ is the complement
of an analytic subset) of the parameter space where $\rank M_s$,
$\rank N_s$, and $\rank 
Q_s$ all take their maximum values. Since by hypothesis $S_o\cap
\Omega$ is not empty, and since the ranks are equal for $s\in\Omega$,
all those maximal ranks are equal to the same number $r$. Then for all
$s\in S_o$ the rank condition holds and $X_s$ has a solution, and the
solution is unique.

In order to prove the continuation principle we must show that
$X=\{X_s\}$ has a meromorphic solution $v_s$. Making use of the
finite envelope of the system $X$, to find a meromorphic solution of
$X$ it is enough to find a meromorphic solution of the parametrized
system $Y = \{ Y_s \} $ where
$$ Y_s = \{ \sum\,b_{\beta\,i}x_i=c_{\beta}: \hbox{ for all }\beta
\}$$
with implicit dependence upon $s$.
Let $r$ be the maximum rank, as above. Choose $s_o\in S_o$ and choose an
$r$-by-$r$ minor 
$$D_{s_o} = \{ b_{\beta,j}: \beta\in\{\beta_1,\ldots,\beta_r\},
j \in \{j_1,\ldots,j_r\}\}$$ 
of full rank, inside the matrix
$N_{s_o}$, with constraints on the indices as indicated. Let
$S_1\subset S_o$ be the set of 
points $s$ where $D_s$ has full rank, that is, where
$\det D_s\not=0$. Consider the system of equations
$$Z = \{ \sum_{j\in\{j_1,\ldots,j_r\}}\,b_{\beta\,j} \, x_j = c_{\beta} :
\beta\in\{\beta_1,\ldots,\beta_r\}\} \;\;\;\hbox{(with $s$ implicit)}$$
By Cramer's Rule, for $s\in S_1$ this system has a unique solution
$(x_{1,s},\ldots,x_{r,s})$. Further, since the coefficients are
holomorphic in $s$, the expression for the solution obtained via
Cramer's rule show that the solution is meromorphic in $s$. 
Extending this solution by 
$x_j= 0$ for $j$ not among $j_1,\ldots,j_r$, we see that it satisfies
the $r$ equations corresponding to rows
$\beta\in\{\beta_1,\ldots,\beta_r\}$ of the system $Y_s$. Then for
$s\in S_1$ the equality $\rank N_s=\rank Q_s=r$ implies that after
satisfying the first $r$ equations of $Y_s$ it will 
automatically satisfy the rest of the equations in the system $Y_s$.

Thus, the system has a {\it weakly} holomorphic solution. Earlier
observations on weak-to-strong principles assure that this solution is
holomorphic. This proves the continuation principle. \end{proof}


\subsection{Finite envelope criteria}

\begin{proposition}\label{dominance} ({\it Dominance}) (Called {\it inference} by Bernstein.) Let
$X'=\{X'_s\}$ be another holomorphically parametrized system of
equations in a linear space $V'$, with the same parameter space as a
system $X=\{X_s\}$ on a space $V$. We say that
$X'$ {\it dominates} $X$ when there 
is a family of morphisms $h_s:V'\to V$, weakly holomorphic in
$s$, so that
 $$\Sol X_s \;\subset\; h_s (\Sol X'_s)$$
for all $s$. If $X'_s$ is locally finite then $X_s$ is locally finite.
\end{proposition}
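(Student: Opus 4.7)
The plan is to construct a finite envelope for $X$ directly from the given envelope for $X'$ by composition with $h_s$. Since the statement is local (both finite and locally finite hypothesis), I would fix a point $s_0$ in the parameter manifold and work in a neighborhood on which $X'$ admits an actual finite holomorphic envelope $f'_s : F' \to V'$ with $F'$ finite-dimensional and $\im f'_s \supset \Sol X'_s$.

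The candidate envelope for $X$ on the same neighborhood is
$$f_s \;:=\; h_s \circ f'_s \;:\; F' \;\longrightarrow\; V.$$
First I would check the inclusion of images. For any $v \in \Sol X_s$, by hypothesis there exists $v' \in \Sol X'_s$ with $v = h_s(v')$, and by the envelope property for $X'$ there exists $x \in F'$ with $v' = f'_s(x)$; hence $v = h_s(f'_s(x)) = f_s(x)$, so $\im f_s \supset \Sol X_s$.

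Next I would verify that $\{f_s\}$ is weakly holomorphic in $s$. This is precisely the content of the composition proposition earlier in the appendix (based on Hartogs' theorem): the composite of two weakly holomorphic families of continuous linear maps between topological vector spaces is again weakly holomorphic. Applied to $s \mapsto f'_s \in \Hom^o(F', V')$ and $s \mapsto h_s \in \Hom^o(V', V)$, this yields the weak holomorphy of $s \mapsto h_s \circ f'_s$ as a family in $\Hom^o(F', V)$. Since $F'$ stays finite-dimensional, $f_s$ is indeed a finite holomorphic envelope for $X$ over the chosen neighborhood.

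Finally, the local finiteness of $X$ follows: given any covering of the parameter manifold by open sets $U_\alpha$ on each of which $X'$ admits a finite envelope $f^{\prime(\alpha)}_s$, the collection $\{h_s \circ f^{\prime(\alpha)}_s : s \in U_\alpha, \alpha \in A\}$ is a locally finite holomorphic envelope for $X$. I do not anticipate a genuine obstacle here; the only subtlety is recognizing that the composition principle for weak holomorphy applies uniformly, and that $F'$ is already finite-dimensional so no enlargement of the envelope space is needed.
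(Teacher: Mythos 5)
Your proposal is correct and is essentially the paper's argument: the paper's (very terse) proof also rests on the composition principle for weakly holomorphic maps and leaves the composite envelope $h_s\circ f'_s$ and the image inclusion as the ``clear'' conclusion, which you have simply written out in detail.
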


\begin{proof} The fact that compositions of weakly holomorphic mappings are
weakly holomorphic assures that $X'_s$ really meets the definition of
{\it system}. Granting this, the conclusion is clear. \end{proof}


\begin{theorem} ({\it Banach-space criterion}) Let $V$ be a Banach space, and
$X$ a single parametrized homogeneous equation $T_s(v)=0$, with
$T_s:V\to W$, where $W$ is also a Banach space, and where $s\to T_s$
is holomorphic for the uniform-norm Banach-space topology on
$\Hom^o(V,W)$. If for some fixed $s_o$ there exists an operator
${\mathbf A}:W\to V$ so that ${\mathbf A}\circ T_{s_o}$ has {\it finite-dimensional
kernel} and {\it closed image}, then $X_s$ is of {\it finite type} in
some neighborhood of $s$.
\end{theorem}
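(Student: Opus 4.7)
The plan is to construct an explicit finite holomorphic envelope for $X_s$ in a neighborhood of $s_o$ by building a parametrix for $S_{s_o} := \mathbf{A} \circ T_{s_o}$. Set $S_s := \mathbf{A} \circ T_s : V \to V$, a holomorphic family in the operator-norm topology on bounded operators. Since $\Sol X_s = \ker T_s \subset \ker S_s$, it suffices to produce a finite-dimensional space $F$ and a weakly holomorphic family of continuous linear maps $f_s : F \to V$ with $\ker S_s \subset \im f_s$ for $s$ near $s_o$; then $\Sol X_s \subset \im f_s$ as required for a finite envelope.

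I would proceed as follows. Let $N := \ker S_{s_o}$ and $R := \im S_{s_o}$, and decompose $V = N \oplus V_1$ using a closed complement $V_1$, which exists because $N$ is finite-dimensional; let $P : V \to N$ be the bounded projection. The restriction $S_{s_o}|_{V_1} : V_1 \to R$ is a continuous bijection between Banach spaces, hence a topological isomorphism by the open mapping theorem; call its inverse $G_0 : R \to V_1$. Choose a bounded projection $\pi : V \to R$ and set $G := G_0 \circ \pi : V \to V$. A direct computation on $V = N \oplus V_1$ gives the parametrix identity $G \circ S_{s_o} = I - P$. Since $s \mapsto S_s$ is holomorphic in operator norm, the Neumann series furnishes a bounded holomorphic inverse $B_s := (I + G(S_s - S_{s_o}))^{-1}$ for $s$ in a small neighborhood of $s_o$. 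Rewriting $G \circ S_s + P = I + G(S_s - S_{s_o}) = B_s^{-1}$ and composing with $B_s$ yields $B_s G S_s + B_s P = I$; evaluated on $v \in \ker S_s$ this collapses to $v = B_s(Pv) \in B_s(N)$, so $\ker S_s \subset B_s(N)$. Taking $F := N$ and $f_s := B_s|_N$ then gives the required envelope, with $f_s$ continuous and $s \mapsto f_s$ holomorphic in the operator-norm topology on $\Hom^o(N,V)$, hence weakly holomorphic as required.

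The main obstacle is the existence of the bounded projection $\pi : V \to R$, which requires the closed image $R$ to be complemented in $V$. This is automatic when $V$ is a Hilbert space, which is precisely the setting of the paper's application where $V = L^2(\Gamma \backslash X, \ell)$, so the parametrix construction goes through without additional hypothesis. For a general Banach space one would either have to impose complementability of $R$ explicitly or replace the parametrix step by a more delicate semi-Fredholm perturbation argument working with the varying closed ranges $\im S_s$ directly; but for the intended application no extra ingredient is needed, and the finite envelope $(N, f_s)$ feeds directly into the Continuation Principle via the Dominance proposition to yield meromorphic continuation of $E_s$.
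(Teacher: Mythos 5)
Your argument is correct in the Hilbert-space setting and is at bottom the same perturbation argument as the paper's, packaged differently. The paper sets $V_o=\ker({\mathbf A}\circ T_{s_o})$, $V_1=\im({\mathbf A}\circ T_{s_o})$, takes a Hahn--Banach projection $p:V\to V_o$, and inverts $\varphi_s=p\oplus({\mathbf A}\circ T_s):V\to V_o\oplus V_1$ for $s$ near $s_o$ via the open mapping theorem and an operator-norm estimate; the envelope is $\varphi_s^{-1}(V_o\oplus\{0\})$, and dominance transfers it to $X_s$. You instead build an explicit left parametrix $G$ with $G\circ S_{s_o}=I-P$ and invert $I+G(S_s-S_{s_o})$ by a Neumann series, obtaining the envelope $f_s=B_s|_N$; this makes the holomorphy of $s\mapsto f_s$ completely transparent and feeds into the Continuation Principle exactly as the paper's version does. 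The one genuine issue is the one you flag yourself: your $G=G_0\circ\pi$ requires a bounded projection $\pi:V\to R$ onto the closed range, i.e.\ that $R$ be complemented, which can fail in a general Banach space, so as a proof of the theorem as stated your argument is incomplete. It is worth noting, however, that the paper's proof quietly leans on the same point: for $s\ne s_o$ the operator $T'_s={\mathbf A}\circ T_s$ need not map into $V_1$, so writing $\varphi_s:V\to V_o\oplus V_1$ already presupposes something like your projection (or a reduction to the semi-Fredholm stability theorem, which is the standard complement-free fix). In the intended application $V=L^2(\Gamma\backslash X,\ell)$ is a Hilbert space and ${\mathbf A}\circ T_{s_o}=1_V+(\hbox{compact})$, so the range is complemented and both versions close up; if you want the statement for arbitrary Banach $V$, replace the projection step by Kato's stability argument for operators with finite-dimensional kernel and closed range.
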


\begin{proof} Let $V_1$ be the image of ${\mathbf A}\circ T_{s_o}$, and 
$V_o$ the kernel of ${\mathbf A}\circ T_{s_o}$.

We claim that finite dimensional $V_o\subset V$ has a continuous
linear $p:V\to V_o$ which is the identity on $V_o$.
Indeed, for a basis
$v_1,\ldots,v_n$ of $V_o$, and for $v\in V_o$, the coefficients
$c_i(v)$ in the expression $v=\sum_i c_i(v)v_i$ are continuous linear
functionals on $V_o$. By Hahn-Banach, each $c_i$ extends to a
continuous linear functional $\lambda_i$ on $V$, and
$p(v)=\lambda_1(v)v_1+\ldots+\lambda_n(v)v_n$ is as desired.

Let $q={\mathbf A}\circ T_{s_o}:V\to V_1$.

Let $X'_s$ be a new system in $V$, given by a single equation
$T'_s(v)=0$, where $T'_s=q\circ T_s:V\to V_1$. If $T_s(v)=0$, then
$T'_s(v)=0$, so $X'_s$ {\it dominates} $X_s$.

Since $V_1\subset V$ is {\it closed}, it is a Banach space. Consider
the holomorphic family of maps 
$$
\varphi_s \;=\; p \oplus T'_s \;:\; V \to V_o\oplus V_1
$$
where $V_o\oplus V_1$ is given its natural Banach space structure. The
function $s\to \varphi_s$ is holomorphic for the operator-norm topology on
$\Hom^o(V,V_o\oplus V_1)$.

By construction, $\varphi_{s_o}$ is a bijection, so by the Open Mapping
Theorem it is an isomorphism. The continuous inverse $\varphi_{s_o}^{-1}$
has an operator norm $\delta^{-1}$ with $0<\delta^{-1}<+\infty$. With
$s$ sufficiently near $s_o$ so that $|\varphi_{s_o}-\varphi_s|<\delta/2$,
$$
|\varphi_s(x)| \;\ge\; |\varphi_{s_o}(x)| - |\varphi_s(x)-\varphi_{s_o}(x)| \;\ge\;
\delta\cdot |x|-{\delta\over 2}\cdot |x|\;\ge\; {\delta\over 2}\cdot |x|.
$$
Thus, $\varphi_s$ is an isomorphism for $s$ sufficiently near $s_o$. The map $s\to \varphi_s^{-1}$ is holomorphic on a neighborhood of $s_o$,
since the operator-norm topology restricted to invertible elements in
$\Hom^o(V,V_o\oplus V_1)$ is the same as the operator-norm topology
restricted to invertible elements in $\Hom^o(V_o\oplus V_1, V)$. This
follows from the continuity of $T\to T^{-1}$ on a neighborhood of an
invertible operator.

There is a finite envelope $\varphi_s^{-1}(V_o\oplus\{0\})$ for $X'_s$. By
{\it dominance}, there is a finite envelope for $X_s$. \end{proof}


\begin{corollary}\label{compactop} {\it (Compact operator criterion)} Let $V$ be a Banach space with
system $X_s$ given by a single equation $T_s:V\to W$, with Banach space
$W$, requiring $T_s(v)=0$, with $s\to T_s$ holomorphic for the
operator-norm topology. Suppose for some $s_o$ the operator $T_{s_o}$
has a left inverse modulo compact operators, that is, there exists
${\mathbf A}:W\to V$ such that
$$
{\mathbf A}\circ T_{s_o} \;=\; 1_V + \hbox{(compact operator)}.
$$
Then $X_s$ is of finite type in some neighborhood of $s_o$.
\end{corollary}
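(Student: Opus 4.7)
The plan is to reduce this directly to the Banach-space criterion proved just above. That theorem requires only that ${\mathbf A}\circ T_{s_o}$ have finite-dimensional kernel and closed image, so it suffices to verify these two properties for the operator $1_V + C$, where $C$ is the compact operator supplied by the hypothesis.

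First, I would recall the standard Riesz--Schauder facts about compact perturbations of the identity on a Banach space: for $C:V\to V$ compact, the operator $1_V + C$ is Fredholm of index zero. In particular, $\ker(1_V+C)$ is finite-dimensional and $\im(1_V+C)$ is closed. Both assertions are contained in the classical Fredholm alternative; the closedness of the image follows from the fact that $1_V+C$ descends to an injective map on the quotient $V/\ker(1_V+C)$ that is bounded below on a complemented subspace, and the finite-dimensionality of the kernel is immediate since the closed unit ball of $\ker(1_V+C)$ is mapped into itself by $-C$ and hence is compact. These are standard and can simply be cited.

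With these two properties in hand, set ${\mathbf A}\circ T_{s_o} = 1_V + C$, apply the Banach-space criterion to the operator ${\mathbf A}$, and conclude that $X_s$ is of finite type in a neighborhood of $s_o$. The only point requiring attention is that the hypothesis $s\mapsto T_s$ is holomorphic for the operator-norm topology, which is exactly the hypothesis of the Banach-space criterion; composition with the fixed continuous operator ${\mathbf A}$ preserves operator-norm holomorphy, so there is nothing further to check.

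The argument is essentially a one-line reduction; the main (very mild) obstacle is merely recording that $1_V + \text{compact}$ is a Fredholm operator, a standard fact for which one invokes Riesz--Schauder theory without reproof.
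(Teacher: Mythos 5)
Your proposal is correct and follows essentially the same route as the paper: both verify that $1_V+(\text{compact})$ has finite-dimensional kernel and closed image (the paper via the spectral theory of compact operators, you via the equivalent Riesz--Schauder/Fredholm facts) and then invoke the Banach-space criterion.
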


\begin{proof} Let $K$ be that compact operator. The kernel $V_o=\ker(1_V+K)$ is the
$-1$ eigenspace for $K$, finite-dimensional by the spectral theory of
compact (not necessarily self-adjoint or normal) operators. Similarly,
the image $V_1$ is closed. Thus, the theorem applies. \end{proof}

\bibliographystyle{amsalpha}
\bibliography{Eisref}{}
\end{document}